\newtheorem{thm}{Theorem}[section]
\newtheorem{cor}[thm]{Corollary}
\newtheorem{lem}[thm]{Lemma}
\newtheorem{prop}[thm]{Proposition}
\newtheorem{rem}[thm]{\normalfont Remark}
\newtheorem{exam}[thm]{\normalfont Example}
\numberwithin{equation}{section}
\begin{document}
\title[$q$-deformed integers derived from pairs of coprime integers]{\bf $q$-deformed integers derived from pairs of coprime integers and its applications}
\author{
{
Michihisa Wakui}
}
\address{Department of Mathematics, Faculty of Engineering Science, 
Kansai University, Suita-shi, Osaka 564-8680, Japan}
\email{wakui@kansai-u.ac.jp}


\thanks{This paper is dedicated in memory of Toshie Takata san.}

\maketitle 
\begin{abstract}
In connection with cluster algebras, snake graphs and $q$-integers, Kyungyong Lee and Ralf Schiffler recently found a formula for computing the (normalized) Jones polynomials of rational links in terms of continued fraction expansion of rational numbers. 
Sophie Morier-Genoud and Valentin Ovsienko introduced $q$-deformed continued fractions, and showed that by using them each coefficient of the normalized Jones polynomial counted quiver representations of type $A_n$. 
In this paper we introduce $q$-deformed integers defined by pairs of coprime integers, which are motivated by the denominators and the numerators of their $q$-deformed continued fractions, and give an efficient algorithm for computing the (normalized) Jones polynomials of rational links. 
Various properties of $q$-integers defined by pairs of coprime integers are
investigated and shown its applications. 
\end{abstract}

\footnote[0]{2020 Mathematics Subject Classification. 57K10, 57K14, 11A55, 05A30, 13F60}

\baselineskip 16pt 

For computation of the Jones polynomials for rational links, various formulas have already been known \cite{BBL, Kanenobu, Kogiso-Wakui_Proc, Kogiso-Wakui, Kogiso-Wakui_OJM, LLS, LeeSchiffler, M-GO2, Nakabo1, Nakabo2, QYAQ, Yamada-Proceeding}. 
Among them Lee and Schiffler \cite{LeeSchiffler} study on the Jones polynomials of rational links from connection with cluster algebras and snake graphs, and give formulas for computation of them based on continued fractions. 
Morier-Genoud and Ovsienko \cite{M-GO2} introduce a notion of $q$-deformed continued fractions and show that 
the normalized Jones polynomials whose constant term is $1$ can be computed from their $q$-deformed rational numbers. 
Kogiso and the author \cite{Kogiso-Wakui_Proc, Kogiso-Wakui, Kogiso-Wakui_OJM} define the Kauffman bracket polynomials for Conway-Coxeter friezes of zigzag type, and show that the friezes correspond to the rational links up to isomorphism. 
\par 
In the present paper we introduce a new algorithm for computation of the Jones polynomials of rational links and give some applications of them. 
Our approach is based on $q$-Farey sums by the negative continued fraction expansions of rational numbers, which are introduced by Morier-Genoud and Ovsienko \cite{M-GO2}, and 
the formula computing writhes of rational link diagrams given by Nagai and Terashima~\cite{Nagai_Terashima}. 
\par 
For a rational number $\alpha$ the normalized Jones polynomial $J_{\alpha }(q)$ of the corresponding rational link to $\alpha$ can be simply computed in comparison with the original Jones polynomial $V_{\alpha }(t)$ under the substitution of $t=-q^{-1}$. 
In fact, the normalized Jones polynomial can be inductively computed by $q$-Farey sums, which relate with hyperbolic geometry, cluster algebras, Euler-Ptolemy-Pl\"{u}cker relation, and so on. 
According to the original definition by Morier-Genoud and Ovsienko, for computing the $q$-Farey sum of $q$-rational numbers $[\frac{x}{a}]_q$ and $[\frac{y}{b}]_q$, the negative continued fraction expansion of the Farey sum $\frac{x}{a}\sharp \frac{y}{b}$ is required. 
This gives rise to an interesting phenomenon as follows. 
For two rational numbers whose denominators are common, namely $r$, the denominators of their $q$-rational numbers does not coincide while they are $q$-deformations of the same integer $r$. 
To analyze this fact, we introduce $q$-deformations $(a, b)_q$ of a positive integer $k$ associated with pairs $(a, b)$ of coprime and positive integers such that $a+b=k$. 
In the present paper various properties and equations on $(a, b)_q$ are investigated. 
In particular, it is shown that 
the $q$-rational number of the Farey sum $\frac{x}{a}\sharp \frac{y}{b}$ is given by $\frac{(x,y)_q}{(a, b)_q}$.  
This means that the $q$-rational number $[\frac{x}{a}\sharp \frac{y}{b}]_q$ can be obtained by computing from the numerators $x, y$ and the denominators $a, b$, separately. 
It is also given a formula of computing the normalized Jones polynomial $J_{\alpha }(q)$ from $(a, b)_q$. 
Though $(a, b)_q$ actually coincides with the numerator of $q$-rational number $[\frac{a+b}{b}]_q$, it would be expected that $(a, b)_q$ is useful and significant to study $q$-rational numbers and related areas, for instance, construction and analysis of $q$-deformations of Conway-Coxeter friezes. 
\par 
When we wish to recover the original Jones polynomial from the normalized one, 
the most complicated factor is writhes. 
In \cite{LeeSchiffler} the formula for computing writhes of rational link diagrams by using even fraction expansions. 
However, the even fraction expansion of a rational number does not always exist 
though it can be transformed into one having even fraction expansion. 
Thanks to Nagai and Terashima's formula~\cite{Nagai_Terashima}, 
Kogiso and the author~\cite{Kogiso-Wakui_OJM} gave a purely combinatorial formula of the writhe $\mathrm{wr}(\alpha )$ based on the regular continued fraction expansion of $\alpha$.  
In the present paper we derive its negative version. 
\par 
This paper is organized as follows. 
In Section 1 
the definitions of rational links and their normalized Jones polynomials are recalled. 
A useful method of recovering the original Jones polynomials from the normalized ones is described. 
In Section 2 
the above statement is proved. 
Moreover, based on negative continued fraction expansions, a combinatorial formula for computing writhes of rational link diagrams is given.  
In Section 3
we introduce $q$-deformations of integers derived from pairs of coprime integers. 
Various properties of them are investigated. 
In particular, we give a formula of computing 
$q$-Farey sums of $q$-rational numbers defined by Morier-Genoud and Ovsienko \cite{M-GO2} from our $q$-integers derived from pairs of coprime integers. 
In the final section 
it is shown that the normalized Jones polynomials can be computed by $q$-integers derived from pairs of coprime integers. 
As an application of it, 
it is also shown that our $q$-integers actually are given by numerators of $q$-deformed rational numbers.  
So, the coefficients of the $q$-integers have a quiver theoretical meaning given in \cite{M-GO2}. 
\par 
We use the following notations: For an $x\in \mathbb{R}$, we set 
$\lceil x \rceil :=\min \{ \ n\in \mathbb{Z}\ | \ x\leq n\ \} ,\ 
\lfloor x\rfloor :=\max \{ \ n\in \mathbb{Z}\ | \ n\leq x \ \}$. 
We note that if $\alpha \not\in \mathbb{Z}$, then 
$\lfloor \alpha +1\rfloor =\lceil \alpha \rceil $. 
We refer to \cite{Cromwell, Murasugi} et al for basic theory of knots and links.

\section{Rational links and their normalized Jones polynomials}
\par 
Rational links are one of fundamental classes in knots and links. 
In some context they are called two-bridge links.  
They are defined by continued fraction expansions of rational numbers as follows. 
\par 
A rational number $\alpha >0$ can be represented by the following continued fraction: 
$$
[a_1, a_2, \ldots , a_n]:=a_1+\dfrac{1}{\vbox to 18pt{ }a_2+\dfrac{1}{\vbox to 18pt{ }\ddots +\dfrac{1}{\vbox to 18pt{ }a_{n-1}+\dfrac{1}{a_n}}}}, 
$$
where 
$a_1\geq 0$ and $a_2, \ldots , a_n>0$ are integers. 
This representation is unique if  parity of $n$ is specified. 
\par 
Let $\alpha \in \mathbb{Q}$ with $0<\alpha <1$, and write in the form 
$\alpha =[0, a_1, \ldots , a_n]$, where $n$ is odd. 
The link in the $3$-sphere $\mathbb{S}^3$ determined by the diagram $D(\alpha )$ below is called a rational link. 
\begin{equation*}
D( \alpha ) =\ \raisebox{-0.6cm}{\includegraphics[height=1.5cm]{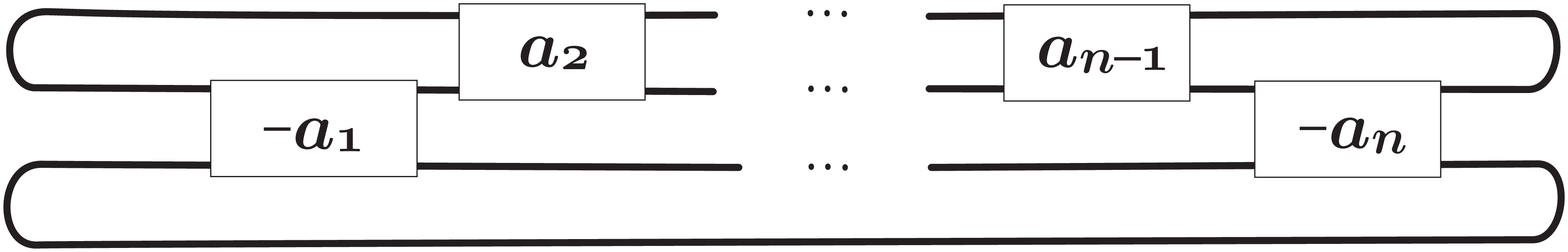}}\, 
\end{equation*}
\noindent 
where 
$$\raisebox{-0.3cm}{\includegraphics[height=0.8cm]{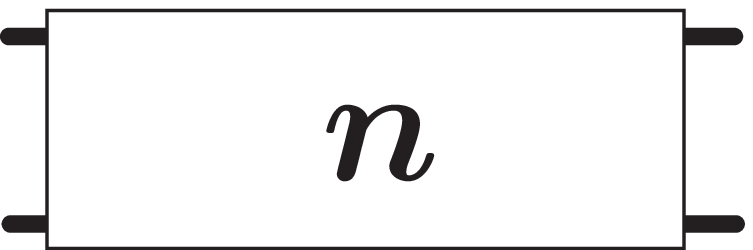}}\ \ =\ 
\begin{cases}
\raisebox{-0.5cm}{\includegraphics[height=1.2cm]{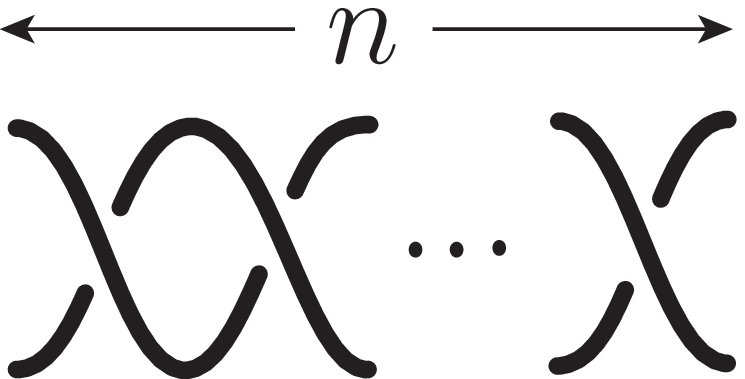}}\ & \raisebox{-0.2cm}{if $n\geq 0$,}\\[0.5cm]  
\raisebox{-0.5cm}{\includegraphics[height=1.2cm]{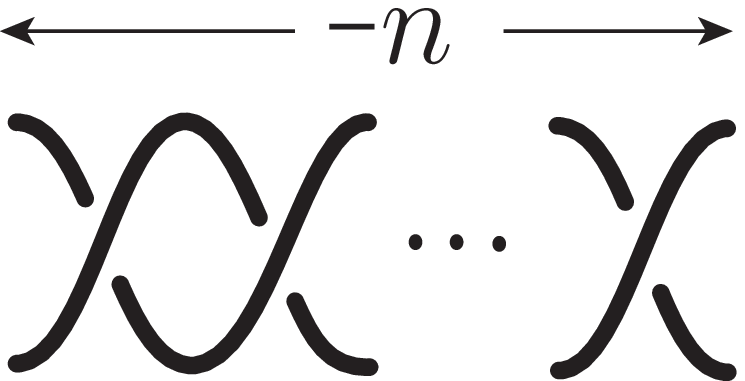}}\ & \raisebox{-0.2cm}{if $n< 0$.}
\end{cases} $$

\par 
If $\alpha >1$, then $D(\alpha )$ is defined by $D(\alpha ):=D(\alpha ^{-1})$, and 
if $\alpha =1$, then $D(\alpha )=\raisebox{-0.5cm}{\includegraphics[height=1.2cm]{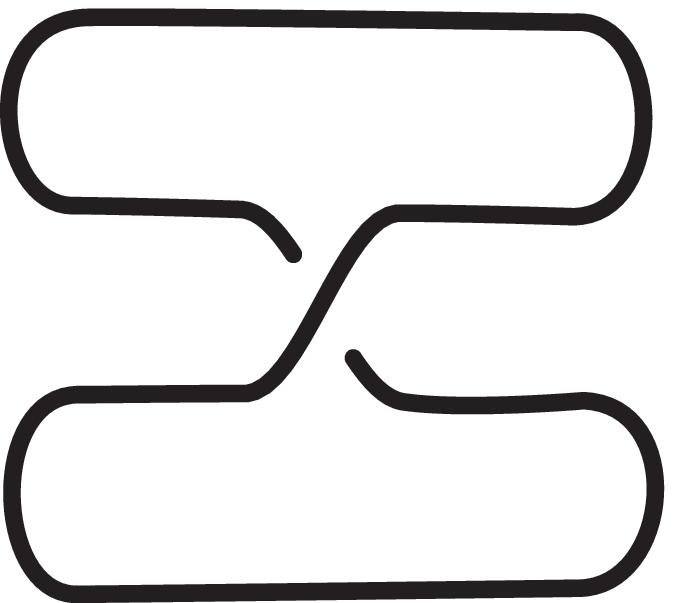}}$.  
According as the denominator of $\alpha$ is odd or even, $D( \alpha )$ is a knot or a two-component link, respectively. 
Orientations for $D(\alpha )$ are given as follows: 

\begin{list}{}{\labelsep=0.1cm \labelwidth=0.8cm \leftmargin=0.8cm}
\item[$\bullet$] 
If the denominator of $\alpha $ is odd, then 
\begin{equation*}
\ \raisebox{-0.4cm}{\includegraphics[height=1.5cm]{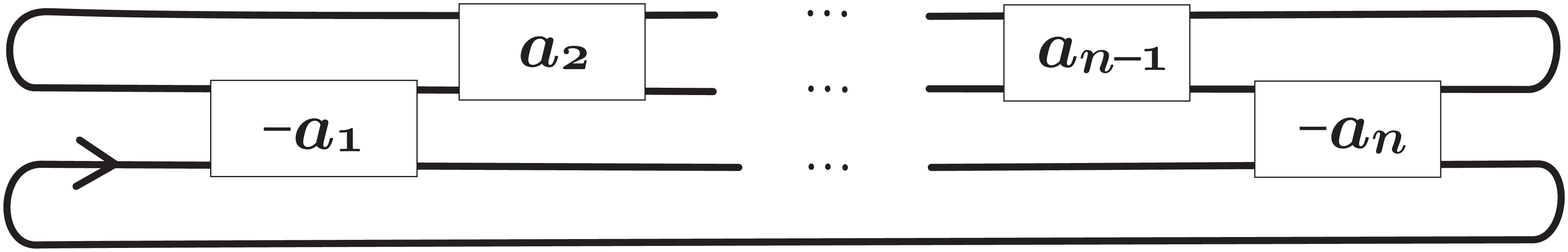}}\ . 
\end{equation*}
\item[$\bullet$] 
If the denominator of $\alpha $ is even, then 
\begin{equation*}
\ \raisebox{-0.4cm}{\includegraphics[height=1.5cm]{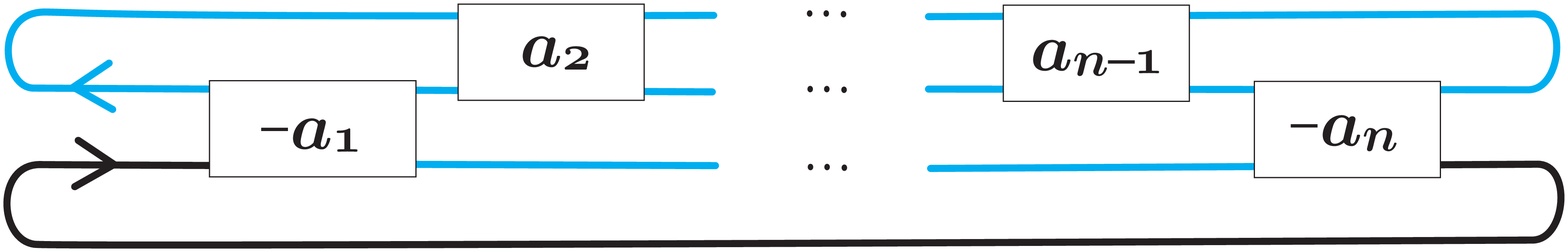}}\ . 
\end{equation*}

In this case there is another orientation for $D( \alpha )$ which is given by 

\begin{equation*}
\ \raisebox{-0.6cm}{\includegraphics[height=1.5cm]{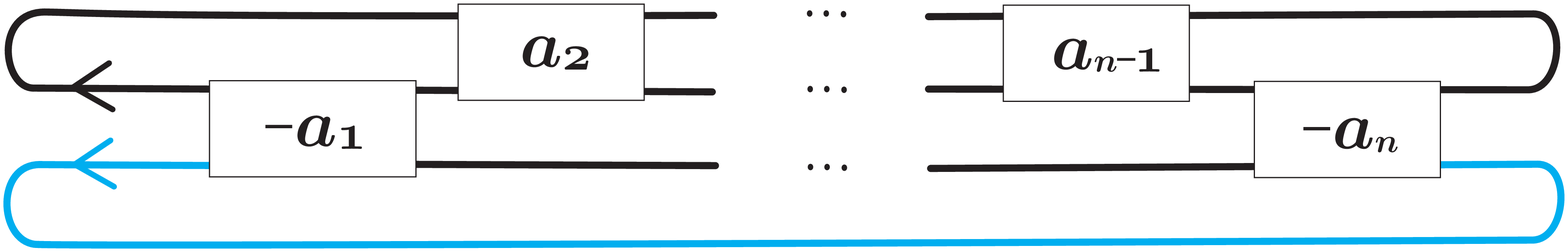}}\ . 
\end{equation*}

We denote the oriented link diagram by $D_{+-}( \alpha )$. 
\end{list} 

\par 
For an $\alpha \in \mathbb{Q}$ the oriented rational link determined by $D(\alpha )$ is denoted by $L(\alpha )$. 
\par 
For a link diagram $D$, the Kauffman bracket polynomial $\langle D\rangle $ is defined. 
It is a regular isotopy invariant of $D$ and takes a value in the Laurent polynomial ring $\mathbb{Z}[A, A^{-1}]$. 
The Jones polynomial \cite{Jones, Kau-Topology} is an isotopy invariant for an oriented link $L$ in $\mathbb{S}^3$, which is valued in $\mathbb{Z}[t^{\pm \frac{1}{2}}]$, and 
computed from the Kauffman bracket polynomial by the formula
\begin{equation}
V_{L}(t)=(-A^3)^{-\text{wr}(D)}\langle D\rangle \bigl|_{A=t^{-\frac{1}{4}}}, 
\end{equation}
\noindent 
where $\text{wr}(D)$ is the writhe of an oriented link diagram $D$ of $L$, which is the sum of signs $\pm 1$ running over all crossings when we   correspond a sign to each crossing as in Figure~\ref{fig1}: 

\begin{figure}[htbp]
\centering 
\includegraphics[height=1.5cm]{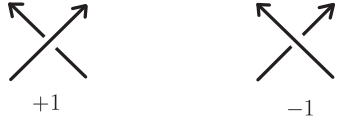}
\caption{signs of crossings}\label{fig1}
\end{figure}

For a rational number $\alpha $, consider the following normalization $J_{\alpha }(q)$ of $V_{\alpha }(t):=V_{L( \alpha )}(t)$~\cite{LeeSchiffler}:  
\begin{equation}
J_{\alpha }(q):=\pm t^{-h}V_{\alpha }(t)|_{t=-q^{-1}}, 
\end{equation}
where $\pm t^{h}$ is the leading term of $V_{\alpha }(t)$. 
This indicates the normalization such that the constant term is $1$ as a polynomial in $q$. 

\begin{exam}
Since $V_{\frac{12}{5}}(t)=-t^{\frac{3}{2}}+t^{\frac{1}{2}}-3t^{-\frac{1}{2}}+2t^{-\frac{3}{2}}-2t^{-\frac{5}{2}}+2t^{-\frac{7}{2}}-t^{-\frac{9}{2}}$, 
$J_{\frac{12}{5}}(q)=1+q+3q^2+2q^3+2q^4+2q^5+q^6$. 
\end{exam}

By Lee and Schiffler~\cite{LeeSchiffler}, it is known that the Jones polynomial  $V_{\alpha }(t)$ can be recovered from 
$J_{\alpha }(q)$. 
Their formula is given by even continued fractions. 
Unfortunately $\alpha$ often does not have an even continued fraction expansion though it can be always transformed into one having even continued fraction expansion \cite[Proposition 2.3]{LeeSchiffler}. 
In this paper we give an alternative formula for it in terms of a negative continued fraction expansion of $\alpha$. 
Here, by a negative continued fraction we means an expression of a rational number as in the form 
\begin{equation}\label{eq1-3}
c_1-\dfrac{1}{\vbox to 18pt{ }c_2-\dfrac{1}{\vbox to 18pt{ }\ddots -\dfrac{1}{\vbox to 18pt{ }c_{l-1}-\dfrac{1}{c_l}}}}, 
\end{equation}
where $c_1, \ldots , c_l$ are integers which are greater than or equal to $2$. 
We denote the fraction \eqref{eq1-3} by $[c_1, \ldots , c_l]^-$, which is denoted by $[\kern-0.1em [ c_1, \ldots , c_l ]\kern-0.1em ]$ in \cite{M-GO2}. 
Any rational number $\alpha \ (>1)$ can be uniquely represented by a negative continued fraction. 
Though the uniqueness does not hold, 
the fraction \eqref{eq1-3} has a meaning even if $c_i=1$ for some $i$.  

\begin{thm}\label{1-3}
For a rational number $\alpha >1$, 
\begin{equation}\label{eq1-4}
V_{\alpha}(t)=(-t)^{-\frac{3}{4}\mathrm{wr}(\alpha )-\frac{3}{4}l+\frac{7}{4}l^{\prime}}J_{\alpha}(-t^{-1}), 
\end{equation}
where $\mathrm{wr}(\alpha )=\mathrm{wr}(D(\alpha ))$ and $l^{\prime}=\sum_{j=1}^l(c_j-2)+1$ for $\alpha =[c_1, \ldots , c_l]^-$ with $c_j\geq 2$ for all $j=1, \ldots , l$. 
\end{thm}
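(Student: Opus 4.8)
The plan is to reduce \eqref{eq1-4} to the defining relation $V_\alpha(t)=(-A^3)^{-\mathrm{wr}(D(\alpha))}\langle D(\alpha)\rangle\big|_{A=t^{-1/4}}$ together with the normalization $J_\alpha(q)=\pm t^{-h}V_\alpha(t)|_{t=-q^{-1}}$, by pinning down the exponent $h$ of the leading term of $V_\alpha(t)$ in terms of the negative continued fraction data $c_1,\dots,c_l$. First I would set $q=-t^{-1}$ in $J_\alpha(q)=\pm t^{-h}V_\alpha(t)|_{t=-q^{-1}}$; since the substitution $t=-q^{-1}$ is an involution, this immediately gives $J_\alpha(-t^{-1})=\pm t^{-h}V_\alpha(t)$, hence $V_\alpha(t)=\pm t^{h}J_\alpha(-t^{-1})$. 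Comparing with \eqref{eq1-4}, the entire content of the theorem is the two assertions: (i) the sign $\pm$ equals $(-1)^{-\frac34\mathrm{wr}(\alpha)-\frac34 l+\frac74 l'}$ appropriately interpreted (i.e.\ the half-integer exponent of $-t$ records both the sign and the power of $t$), and (ii) $h=-\frac34\mathrm{wr}(\alpha)-\frac34 l+\frac74 l'$. So everything comes down to computing $h$, the maximal degree (in $t$) of $V_\alpha(t)$, from the negative continued fraction expansion.

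For step (ii) I would work through the span of the Kauffman bracket of the standard diagram $D(\alpha)$. Writing $\alpha=[c_1,\dots,c_l]^-$, the diagram $D(\alpha)$ is built from a sequence of half-twist regions whose signed twist numbers are governed by the $c_j$; the quantity $l'=\sum_{j=1}^l(c_j-2)+1$ is, up to normalization, the total number of crossings (or a closely related count) of this negative-continued-fraction diagram, and $l$ counts the number of twist regions. The bracket polynomial of an alternating (or adequate) diagram has known extreme $A$-degrees expressible via the number of crossings and the number of Seifert-circle/state-circle counts; concretely I expect $\langle D(\alpha)\rangle$ to have $A$-span determined by $l'$ and $l$, with its maximal or minimal $A$-degree a linear expression in $l'$ and $l$. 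Then applying $V_\alpha(t)=(-A^3)^{-\mathrm{wr}}\langle D(\alpha)\rangle|_{A=t^{-1/4}}$ converts the $A$-degree into a $t$-degree: the factor $(-A^3)^{-\mathrm{wr}}$ contributes $-\tfrac34\mathrm{wr}(\alpha)$ to the exponent after substitution (and a sign $(-1)^{-\mathrm{wr}}$), and the substitution $A=t^{-1/4}$ turns the extreme $A$-degree of $\langle D(\alpha)\rangle$ into the remaining $-\tfrac34 l+\tfrac74 l'$. Matching constants and the sign then finishes both (i) and (ii).

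The main obstacle I anticipate is the bookkeeping of signs and of the extreme bracket degree with the correct orientation conventions, since the paper's $D(\alpha)$ uses a specific convention for positive versus negative half-twists and the negative continued fraction $[c_1,\dots,c_l]^-$ produces all-negative or mixed twist regions depending on how $\alpha$ sits relative to $1$; I would need to verify that the all-$c_j\ge 2$ expansion yields a reduced alternating diagram so that the extreme-degree formula for the bracket is exact (no cancellation), and that the writhe term $\mathrm{wr}(\alpha)$ used here is consistent with the combinatorial formula derived from Nagai--Terashima mentioned in the introduction. A clean way to organize this is induction on $l$: the base case $l=1$ is the $(c_1)$-torus link (a chain of half-twists) whose Jones polynomial is elementary, and the inductive step adds one twist region, changing $l\mapsto l+1$ and $l'\mapsto l'+(c_{l+1}-1)$ while changing the leading $t$-degree by the predicted amount; a skein/Kauffman-bracket recursion on the last twist region should make the increment transparent. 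Once the degree increment and sign increment are checked against $-\tfrac34\mathrm{wr}-\tfrac34 l+\tfrac74 l'$, the theorem follows.
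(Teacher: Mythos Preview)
Your initial reduction is exactly right and matches the paper: once you invert the involution $t\leftrightarrow -q^{-1}$, the whole theorem amounts to identifying the leading monomial $\pm t^{h}$ of $V_\alpha(t)$ as $(-t)^{-\frac34\mathrm{wr}(\alpha)-\frac34 l+\frac74 l'}$.

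Where you diverge is in how you propose to compute $h$. You want to attack the Kauffman bracket $\langle D(\alpha)\rangle$ directly---extreme $A$-degrees of an adequate diagram, then induct on $l$. The paper does something cleaner and shorter: it never touches $\langle D(\alpha)\rangle$ itself. Instead it invokes a result from earlier joint work (Proposition~2.4 here, originally \cite{Kogiso-Wakui_OJM}) which says
\[
V_{\alpha}(t)=(-t^{3/4})^{\widetilde{\mathrm{wt}}(\alpha^{-1})}\langle \Gamma_{\alpha^{-1}}\rangle,
\]
where $\langle \Gamma_{\alpha^{-1}}\rangle$ is the ``ancestor-triangle'' polynomial (a weighted sum over upward paths in $\mathrm{YAT}(\alpha^{-1})$) and $\widetilde{\mathrm{wt}}=-\mathrm{wr}-\mathrm{wt}$ with $\mathrm{wt}(\alpha^{-1})=-(l'-1)+l-1$. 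The point is that the extreme terms of $\langle \Gamma_{\alpha^{-1}}\rangle$ are trivial to read off: they come from the two boundary paths $\gamma_L,\gamma_R$ along the left and right oblique sides, which have exactly $l$ and $l'$ edges respectively (Lemma~2.6), giving $W(\gamma_L)=(-t^{-1})^l$ and $W(\gamma_R)=(-t)^{l'}$. Multiplying $W(\gamma_R)$ by the prefactor and simplifying $\widetilde{\mathrm{wt}}(\alpha^{-1})=-\mathrm{wr}(\alpha)+l'-l$ yields $h$ in one line.

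Your route is not wrong in spirit, but two cautions. First, your parenthetical that $l'$ is ``up to normalization, the total number of crossings'' is off: for $\alpha=[c_1,\dots,c_l]^-$ the numbers $l$ and $l'$ are the counts of left- and right-side edges of the ancestor triangle, equivalently the lengths of the negative and the regular continued fraction expansions, not the crossing number of $D(\alpha)$. Second, your proposed induction on $l$ would need a separate analysis of how $\mathrm{wr}(\alpha)$ changes when one appends $c_{l+1}$, which is precisely the delicate combinatorics the paper pushes into Section~2; going through the ancestor-triangle polynomial sidesteps that entirely because the writhe is already packaged into the exponent $\widetilde{\mathrm{wt}}$ by Proposition~2.4.
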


The theorem would be proved in the next section. 
Since the writhe $\mathrm{wr}(\alpha )$ can be purely computed in a combinatorial way by negative continued fraction expansion as shown in the next section, 
the Jones polynomial $V_{\alpha}(t)$ is recovered from the normalized Jones polynomial $J_{\alpha}(q)$ in a purely combinatorial way. 

\par \medskip 
\section{Computation of the Jones polynomials for rational links based on negative continued fractions}

Irreducible fractions $\frac{x}{a}, \frac{y}{b}$ are said to be   
\textit{Farey neighbors} if $ay-bx=1$. 
In this paper the following conventions are assumed: 
\begin{enumerate}
\item[$\bullet$] $\infty =\frac{1}{0}$ is regarded as an irreducible fraction. 
\item[$\bullet$] For any irreducible fraction $\frac{x}{a}$, it is always $q\geq 0$. 
\end{enumerate}

If $\frac{x}{a}, \frac{y}{b}$ are Farey neighbors, then $\frac{x}{a}\sharp \frac{y}{b}:=\frac{x+y}{a+b}$, called the \textit{Farey sum}, is also irreducible. 
On Farey neighbors the following lemmas hold: 

\begin{lem}\label{2-1}
\begin{enumerate}
\item[$(1)$] Any non-negative rational number can be obtained from $\frac{0}{1}$ and $\frac{1}{0}$ applying  $\sharp$ in finitely many times. 
\item[$(2)$] For any $\alpha \in \mathbb{Q}$, there are uniquely Farey neighbors $\frac{x}{a}, \frac{y}{b}$ such that $\alpha =\frac{x}{a}\sharp \frac{y}{b}$. The pair $(\frac{x}{a}, \frac{y}{b})$ is called the parents of $\alpha$. 
\end{enumerate}
\end{lem}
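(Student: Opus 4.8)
The plan is to prove both parts of Lemma~\ref{2-1} by induction, using the \emph{Stern--Brocot} / Farey structure encoded in the matrices $\begin{pmatrix} x & y \\ a & b \end{pmatrix}$ with $ay - bx = 1$. The key observation is that the Farey-neighbor condition $ay-bx=1$ is exactly the condition that the matrix $M = \begin{pmatrix} x & y \\ a & b \end{pmatrix}$ lies in $\mathrm{SL}_2(\mathbb{Z})$ (with nonnegative entries under our conventions), and that the Farey sum $\tfrac{x}{a}\sharp\tfrac{y}{b} = \tfrac{x+y}{a+b}$ corresponds to right-multiplication of $M$ by one of the two elementary matrices $L = \begin{pmatrix} 1 & 0 \\ 1 & 1 \end{pmatrix}$ or $R = \begin{pmatrix} 1 & 1 \\ 0 & 1 \end{pmatrix}$, depending on which parent is replaced by the mediant.

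For part $(1)$, I would start from the seed pair $\tfrac{0}{1}, \tfrac{1}{0}$, whose matrix is the identity $I = \begin{pmatrix} 0 & 1 \\ 1 & 0 \end{pmatrix}$ (or its obvious reordering), and argue that every nonnegative rational $\alpha = \tfrac{p}{q}$ in lowest terms is reached by a finite sequence of mediant operations. The cleanest induction is on $p+q$: if $\tfrac{p}{q}$ is reduced with $p+q$ minimal it is one of the seeds, and otherwise $p \ne q$ so exactly one of $\tfrac{p-q}{q}$ or $\tfrac{p}{q-p}$ is a nonnegative reduced fraction with strictly smaller numerator-plus-denominator, which by the induction hypothesis is reachable, and one more Farey sum recovers $\tfrac{p}{q}$. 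This is the subtractive Euclidean algorithm in disguise, and the coprimality of $p,q$ is exactly what guarantees termination at a seed.

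For part $(2)$, existence of parents is the content of the last step of the argument above read backwards: given $\alpha = \tfrac{p}{q}$, the predecessor in the Stern--Brocot process supplies one parent, and the Farey-neighbor relation determines the other; equivalently, one factors the matrix $M$ with first column $\binom{p}{q}$ as a word in $L,R$ and reads off the last-but-one convergent. For uniqueness I would suppose $\alpha = \tfrac{x}{a}\sharp\tfrac{y}{b} = \tfrac{x'}{a'}\sharp\tfrac{y'}{b'}$ with both pairs Farey neighbors, so $x+y = x'+y' = p$ and $a+b = a'+b' = q$ after clearing the (forced) common scaling. The Farey-neighbor determinant conditions $ay-bx=1$ and $a'y'-b'x'=1$, together with $\tfrac{x}{a} < \alpha < \tfrac{y}{b}$ and the analogous inequalities for the primed pair, pin down each parent uniquely: from $ay-bx=1$ and $x+y=p,\ a+b=q$ one solves $a(p-x) - (q-a)x = 1$, i.e. $ap - qx = 1$, which has a unique solution $(x,a)$ with $0 \le x < p$ and $0 \le a < q$ because $p,q$ are coprime.

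The main obstacle I expect is the bookkeeping for uniqueness, specifically ruling out that the two candidate parents could be swapped or that boundary cases involving the conventions $\infty = \tfrac{1}{0}$ and the seeds $\tfrac01, \tfrac10$ break the clean congruence argument $ap \equiv 1 \pmod q$. I would handle this by fixing the ordering convention $\tfrac{x}{a} < \tfrac{y}{b}$ once and for all (so the determinant sign $ay-bx=+1$ is unambiguous), checking the degenerate cases $\alpha \in \{0, \infty\}$ and $\alpha = 1$ by hand, and then invoking the uniqueness of the solution to $ap \equiv 1 \pmod q$ in the range $0 \le a < q$ to conclude that $(\tfrac{x}{a}, \tfrac{y}{b})$ is forced. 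The rest is routine verification that the reconstructed $\tfrac{x+y}{a+b}$ is indeed $\alpha$ and already in lowest terms.
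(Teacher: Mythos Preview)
The paper does not give its own proof of Lemma~\ref{2-1}; immediately after the statement it simply refers the reader to \cite[Theorem~3.9]{Aigner} and \cite[Lemma~3.5]{Kogiso-Wakui_Proc}. Your plan is the standard Stern--Brocot argument and is essentially what one finds in those references: descent on $p+q$ via the subtractive Euclidean algorithm for part~(1), and for part~(2) the observation that $ay-bx=1$ together with $x+y=p$, $a+b=q$ collapses to the single B\'ezout relation $ap-qx=1$, whose solution with $0\le a\le q$ is unique because $\gcd(p,q)=1$.

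Two cosmetic points worth tidying before you write it up. First, with your row convention the matrix $\begin{pmatrix} x & y\\ a & b\end{pmatrix}$ has determinant $xb-ya=-1$, not $+1$, so strictly speaking you are working in the coset $\begin{pmatrix}0&1\\1&0\end{pmatrix}\mathrm{SL}_2(\mathbb{Z})$ rather than in $\mathrm{SL}_2(\mathbb{Z})$ itself; this is harmless but the sentence claiming $M\in\mathrm{SL}_2(\mathbb{Z})$ is literally false. Second, and relatedly, the seed matrix $\begin{pmatrix}0&1\\1&0\end{pmatrix}$ is not the identity, though your parenthetical ``or its obvious reordering'' shows you are aware of the bookkeeping. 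Neither issue affects the substance of the argument, which is correct.
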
 

For a proof of the above lemma, see \cite[Theorem 3.9]{Aigner} or \cite[Lemma 3.5]{Kogiso-Wakui_Proc}. 

\begin{lem}\label{2-2}
If $\frac{x}{a}, \frac{y}{b} >0$ are  Farey neighbors, then 
\begin{equation}
\left\lceil \dfrac{x+y}{a+b} \right\rceil =\begin{cases}
\left\lceil \dfrac{x}{a} \right\rceil & (\text{if}\ a>1)\\[0.25cm]  
\left\lceil \dfrac{x}{a} \right\rceil +1& (\text{if}\ a=1)
\end{cases}
\ =\left\lfloor \dfrac{x}{a} +1\right\rfloor  . 
\end{equation}
\end{lem}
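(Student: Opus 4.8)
The plan is to exploit the Farey neighbor condition $ay-bx=1$ to put the mediant into a transparent closed form, and then sandwich it between two consecutive integers. Solving $ay-bx=1$ for $y$ gives $y=\frac{bx+1}{a}$ (so in particular $a\mid bx+1$), and substituting this into the mediant yields
\begin{equation*}
\frac{x+y}{a+b}=\frac{ax+bx+1}{a(a+b)}=\frac{(a+b)x+1}{a(a+b)}=\frac{x}{a}+\frac{1}{a(a+b)}.
\end{equation*}
Since $\frac{x}{a}>0$ and $\frac{y}{b}>0$ are genuine positive fractions, $a\ge 1$ and $b\ge 1$, so the correction term $\frac{1}{a(a+b)}$ is positive and at most $\frac{1}{a}$.

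Next I would set $M:=\lfloor \frac{x}{a}+1\rfloor=\lfloor \frac{x}{a}\rfloor+1$ and prove the two-sided estimate $M-1<\frac{x+y}{a+b}\le M$; this immediately forces $\lceil \frac{x+y}{a+b}\rceil=M$, which is the second equality claimed in the lemma. The lower bound is formal: $M-1=\lfloor \frac{x}{a}\rfloor\le \frac{x}{a}<\frac{x}{a}+\frac{1}{a(a+b)}$. For the upper bound, writing $x=qa+r$ with $0\le r\le a-1$ gives $M=q+1\ge \frac{x-(a-1)}{a}+1=\frac{x+1}{a}$, whence $\frac{x+y}{a+b}=\frac{x}{a}+\frac{1}{a(a+b)}\le \frac{x}{a}+\frac{1}{a}=\frac{x+1}{a}\le M$.

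To recover the displayed case distinction from $\lceil \frac{x+y}{a+b}\rceil=\lfloor \frac{x}{a}+1\rfloor$ I would invoke the remark recorded in the introduction: if $a>1$, irreducibility of $\frac{x}{a}$ gives $\frac{x}{a}\notin\mathbb{Z}$, so $\lfloor \frac{x}{a}+1\rfloor=\lceil \frac{x}{a}\rceil$; if $a=1$, then $\frac{x}{a}=x\in\mathbb{Z}$ and $\lfloor \frac{x}{a}+1\rfloor=x+1=\lceil \frac{x}{a}\rceil+1$. The only mildly delicate point is the upper bound, and concretely the estimate $\lfloor \frac{x}{a}\rfloor+1\ge \frac{x+1}{a}$ (equivalently $a\lfloor x/a\rfloor\ge x-a+1$); everything else is routine. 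One should also note the degenerate possibility $b=0$, i.e. $\frac{y}{b}=\infty$: it is excluded by the hypothesis $\frac{y}{b}>0$, but even if it were admitted the condition $ay-bx=1$ would force $a=1$ and the identity $\frac{x+y}{a+b}=\frac{x}{a}+\frac{1}{a(a+b)}$ would still hold, so the argument would go through unchanged.
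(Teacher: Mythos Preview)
Your proof is correct and in fact cleaner than the paper's. The decisive step is your identity
\[
\frac{x+y}{a+b}=\frac{x}{a}+\frac{1}{a(a+b)},
\]
obtained by eliminating $y$ via $ay-bx=1$; once this is in hand, the sandwich $M-1<\frac{x+y}{a+b}\le M$ with $M=\lfloor x/a\rfloor+1$ is immediate, and the case split on $a=1$ versus $a>1$ reduces to the elementary observation $\lfloor\alpha+1\rfloor=\lceil\alpha\rceil$ for $\alpha\notin\mathbb{Z}$. The paper, by contrast, never isolates this identity. It treats $a=1$ and $b=1$ separately, and for $a,b>1$ writes $x=ma+r$, $y=nb+s$ and splits further into $m\ge n$ and $m<n$, in each subcase manipulating $\frac{x+y}{a+b}$ into the form (integer) $+$ (fractional part) and comparing $\lceil x/a\rceil$ with $\lceil y/b\rceil$. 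Your route avoids all of this branching; the paper's approach has the minor compensating feature that it makes the equality $\lceil\frac{x+y}{a+b}\rceil=\lceil y/b\rceil$ (which sometimes holds) visible along the way, but that is not used elsewhere.
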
 
\begin{proof}
If $a=1$, then  $y=bx+1$. Since 
$\frac{x+y}{a+b}=x+\frac{1}{b+1}$, 
we have 
$\lceil \frac{x+y}{a+b} \rceil =x+1=\lceil \frac{x}{a} \rceil +1$. 
\par 
Next, consider the case $a>1$. 
If $b=1$, then $x=ay-1$, and hence 
$\frac{x+y}{a+b}=y-\frac{1}{a+1}$. 
Thus, $\lceil \frac{x+y}{a+b} \rceil =y=\lceil \frac{x}{a} \rceil $. 
If $b>1$, then $x, y$ are expressed as 
$x=ma+r,\ y=nb+s$ 
for some $r\ (0<r<a),\ s\ (0<s<b)$. 
\par 
If $m\geq n$, then 
\begin{equation*}
\dfrac{x+y}{a+b}=n+\dfrac{(m-n)a+(r+s)}{a+b}. \tag*{$(\ast )$}
\end{equation*}
Since 
$ay-bx=1$, we have $ab(n-m)+sa-rb=(nb+s)a-(ma+r)b=1$. 
Thus 
$(m-n)a=\frac{sa-rb-1}{b}$, and it follows that the right-hand side of $(\ast )$ is less than 
$n+\frac{s}{b}<n+1$. 
This implies that 
$\lceil \frac{x+y}{a+b} \rceil =n+1=\lceil \frac{y}{b} \rceil $. 
On the other hand, since $\frac{x}{a}<\frac{y}{b}$, we have 
$\lceil \frac{x}{a}\rceil \leq \lceil \frac{y}{b} \rceil $ 
while $\lceil \frac{x}{a} \rceil \geq \lceil \frac{y}{b} \rceil $ by 
$m\geq n$. 
Thus 
$m+1=\lceil \frac{x}{a}\rceil =\lceil \frac{y}{b} \rceil =n+1$. 
This means that if $m\geq n$, then $m=n$, and 
$\lceil \frac{x+y}{a+b} \rceil =\lceil \frac{y}{b} \rceil =\lceil \frac{x}{a}\rceil $. 
\par 
If $n>m$, then 
$\frac{x+y}{a+b}=m+\frac{(n-m)b+(r+s)}{a+b}=m+\frac{1+r(a+b)}{a(a+b)}
<m+1$. 
Therefore, $\lceil \frac{x+y}{a+b} \rceil =m+1=\lceil \frac{x}{a} \rceil $. 
\end{proof} 

From Lemma~\ref{2-1} we have a binary tree with extra vertices $\frac{0}{1}, \frac{1}{0}$ and two (dotted) edges, 
whose vertices are the non-negative rational numbers, depicted as in Figure~\ref{fig2}. 
We call the tree the (extended) \textit{Stern-Brocot tree}, which is essentially the same with the Farey tessellation appeared in hyperbolic geometry. 
We note that any non-negative rational number is appeared only once in the Stern-Brocot tree as a vertex. 

\begin{figure}[htbp]
\centering\includegraphics[width=11cm]{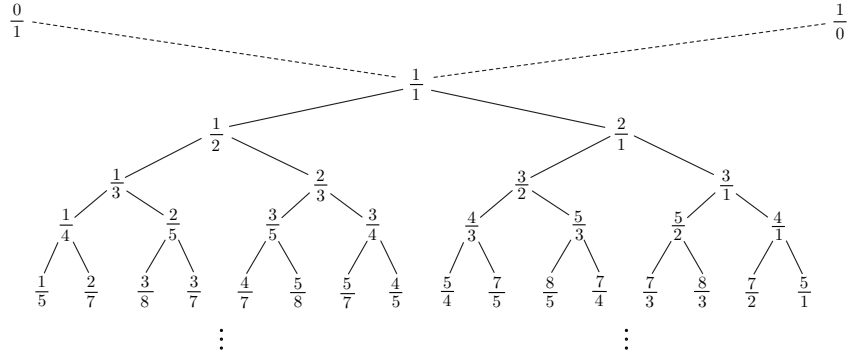}
\caption{the (extended) Stern-Brocot tree}\label{fig2}
\end{figure}

Based on the Stern-Brocot tree, for each positive rational number $\alpha $ one can find a triangle $\text{YAT}(\alpha )$ which is shaped as \raisebox{-0.6cm}{\includegraphics[width=1.8cm]{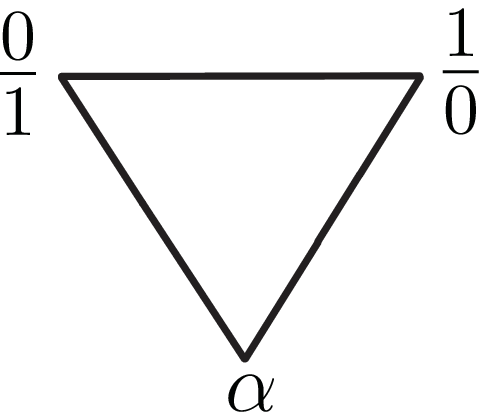}}\ and decomposed into small triangles according to the following rules: 

\begin{enumerate}\itemindent=1cm 
\item[(YAT1)] All vertices are on two oblique edges. 
\item[(YAT2)] For any small triangle such as \kern-0.2em 
\raisebox{-0.6cm}{\includegraphics[width=2.2cm]{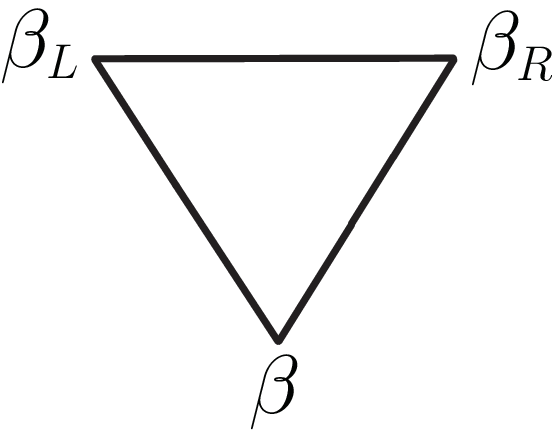}}, 
$\beta =\beta _L\sharp \beta _R$. 
\end{enumerate}

The triangle $\text{YAT}(\alpha )$ is called the \textit{ancestor triangle} of $\alpha$, and 
each small triangle in $\text{YAT}(\alpha )$ is called a \textit{fundamental triangle}. 
This concept is introduced by Shuji Yamada~\cite{Yamada-Proceeding} to study of the Jones polynomials of two-bridge links. 
The same concept is also introduced by Hatcher and Ortel~\cite{HO} from the more geometrical point of view. 

\begin{exam}
Let $\alpha =\frac{7}{4}$. Then 
$\alpha =\frac{5}{3}\sharp \frac{2}{1}$,\ 
$\frac{5}{3}=\frac{3}{2}\sharp \frac{2}{1}$,\ 
$\frac{3}{2}=\frac{1}{1}\sharp \frac{2}{1}$,\  
$\frac{2}{1}=\frac{1}{1}\sharp \frac{1}{0}$,\ 
$\frac{1}{1}=\frac{0}{1}\sharp \frac{1}{0}$. 
Thus,  
$\mathrm{YAT}(\frac{7}{4})$ consists of the blue parts in Figure~\ref{fig3}. 

\begin{figure}[htbp]
\centering\includegraphics[width=7cm]{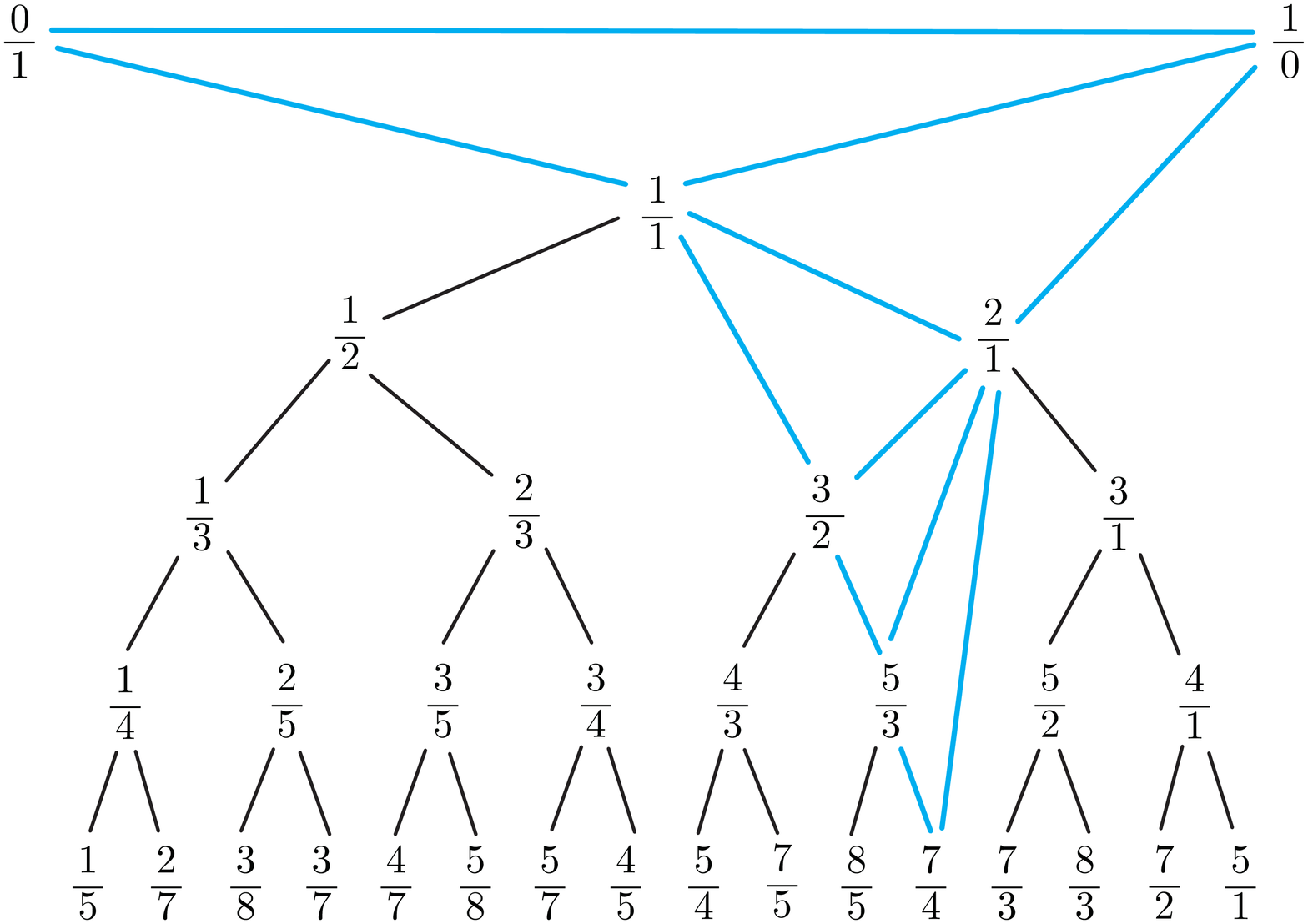}
\hspace{0.5cm} \raisebox{2cm}{$\Longrightarrow$ } \hspace{0.5cm} 
\raisebox{2cm}{$\mathrm{YAT}\bigl( \frac{7}{4} \bigr)  =$} \hspace{0.1cm} 
\includegraphics[width=3.5cm]{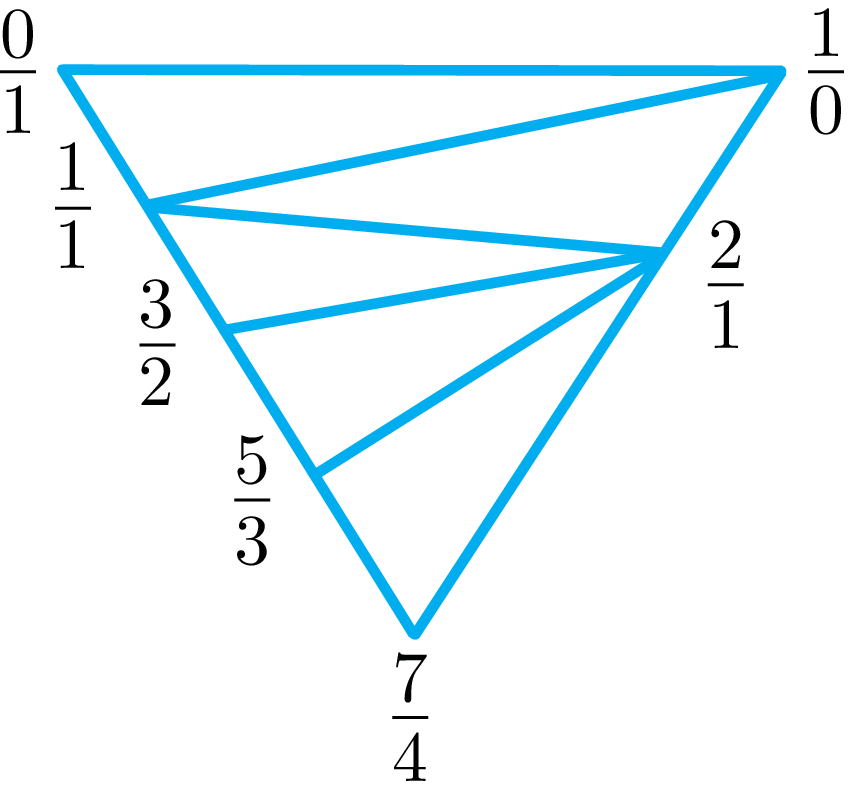}
\caption{the ancestor triangle of $\frac{7}{4}$}\label{fig3}
\end{figure}

\end{exam} 

Let $\alpha \in (0,1)\cap \mathbb{Q}$. We then define a Laurent polynomial $\langle \varGamma _{\alpha }\rangle \in \mathbb{Z}[t, t^{-1}]$ as follows. 
First of all, for each fundamental triangle, we set $-t^{-1}$ and $-t$ on the left and right oblique edges, respectively. 
For an upward path $\gamma$ from $\alpha $ to $\frac{0}{1}$ or $\frac{1}{1}$ 
we compute the product $W(\gamma )$ of $-t^{\pm 1}$'s on all edges passed by $\gamma$.  
Finally, we set $\langle \varGamma _{\alpha }\rangle =\sum _{\gamma } W(\gamma )$ under replacing $t=A^{-4}$. 
By previous joint works with Kogiso,  
it is shown that $\langle \varGamma _{\alpha }\rangle $ coincides with the Jones polynomial $V_{\alpha}(t)$ up to multiplying $\pm t^{\frac{3}{4}k}$ for some $k\in \mathbb{Z}$. 
More precisely we have the following: 

\begin{prop}[{\cite[Proposition 4.3]{Kogiso-Wakui_OJM}}]\label{2-4}
For an $\alpha \in (0,1)\cap \mathbb{Q}$, 
\begin{equation}\label{eq2-1}
V_{\alpha }(t)=(-A^{-3})^{\widetilde{\mathrm{wt}}(\alpha )}\langle \varGamma_{\alpha } \rangle |_{A=t^{-\frac{1}{4}}},
\end{equation}
where $\widetilde{\mathrm{wt}}(\alpha )=-\mathrm{wr}(\alpha )-\mathrm{wt}(\alpha )$, and 
$\mathrm{wt}(\alpha )=-\sum_{j=1}^l(c_j-2)+l-1$ for $\alpha ^{-1}=[c_1, \ldots , c_l]^-$ with $c_j\geq 2$ for all $j$. 
\end{prop}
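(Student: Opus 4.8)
The plan is to deduce Proposition~\ref{2-4} from the Kauffman-bracket skein calculus for rational tangles together with formula~(1.1): one identifies the path polynomial $\langle\varGamma_\alpha\rangle$ with the Kauffman bracket of $D(\alpha)$ up to an explicit monomial in $A$, and then computes that monomial from the negative continued fraction of $\alpha^{-1}$.

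First I would reduce the proposition, via~(1.1), to an identity of the shape $\langle D(\alpha)\rangle = (-A^{3})^{m(\alpha)}\langle\varGamma_\alpha\rangle$ (after the substitution $t=A^{-4}$) together with the combinatorial evaluation $m(\alpha) = \mathrm{wr}(\alpha) - \widetilde{\mathrm{wt}}(\alpha)$; equivalently, that the upward-path sum over $\mathrm{YAT}(\alpha)$ computes the Kauffman bracket of the standard alternating diagram $D(\alpha)$ up to a global monomial. The existence of such a monomial, without its precise value, is the content of the previous joint works with Kogiso recalled in the text, which rest on Yamada's~\cite{Yamada-Proceeding} and Hatcher--Ortel's~\cite{HO} ancestor triangle: the upward paths in $\mathrm{YAT}(\alpha)$ are in bijection with the Kauffman states of $D(\alpha)$ — one fundamental triangle per crossing, a left (resp.\ right) oblique edge recording the $A$- (resp.\ $B$-) smoothing there — and under this bijection the weight $W(\gamma)$ matches the state contribution $A^{\,\#A-\#B}\,\delta^{\,|s|-1}$ (with loop value $\delta=-A^{2}-A^{-2}$) up to a factor independent of the state, coming only from Reidemeister-I framing corrections of $D(\alpha)$.

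The main work — and the step I expect to be the real obstacle — is to pin down $m(\alpha)$ and to recognise it as $\mathrm{wr}(\alpha) - \widetilde{\mathrm{wt}}(\alpha) = 2\,\mathrm{wr}(\alpha)+\mathrm{wt}(\alpha)$. For the value of $m(\alpha)$ I would induct on the length $l$ of $\alpha^{-1}=[c_1,\dots,c_l]^-$, peeling off one twist region of $D(\alpha)$ at a time: erasing the last twist region replaces $\alpha$ by a rational $\alpha'$ whose ancestor triangle sits inside $\mathrm{YAT}(\alpha)$ as an explicit sub-triangle, and running the Kauffman skein relation across the $c_l$ crossings of that region, together with the Reidemeister-I reductions created when it collapses, multiplies the monomial by a controlled power of $-A^{3}$; accumulating over $j$ yields an exponent assembled from the $c_j-2$'s and from $l$, giving the $\mathrm{wt}(\alpha)=-\sum_{j=1}^{l}(c_j-2)+l-1$ part as the framing defect. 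To complete the identification I would then substitute the purely combinatorial evaluation of $\mathrm{wr}(\alpha)$ coming from Nagai--Terashima's writhe formula~\cite{Nagai_Terashima} in its negative-continued-fraction form — exactly the formula developed in the next section — so that $m(\alpha)-\mathrm{wr}(\alpha)$ collapses to $\widetilde{\mathrm{wt}}(\alpha)$.

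The difficulty concentrates entirely in this exponent bookkeeping: one must keep apart the ambient-isotopy behaviour of $\langle\varGamma_\alpha\rangle$ and the regular-isotopy behaviour of $\langle D(\alpha)\rangle$, track the alternating-diagram sign and the powers of $-A^{\pm3}$ through every Reidemeister-I move, and handle the two orientation choices $D(\alpha)$ and $D_{+-}(\alpha)$ arising when the denominator of $\alpha$ is even, as well as the mismatch between the regular and negative continued fraction expansions that enter $\mathrm{wr}$ and $\mathrm{wt}$ respectively. Since the statement is quoted verbatim as \cite[Proposition 4.3]{Kogiso-Wakui_OJM}, the most economical course within the present paper is to cite that proof, indicating only the steps above.
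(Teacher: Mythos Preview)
The paper gives no proof of Proposition~\ref{2-4}: it is quoted verbatim from \cite[Proposition~4.3]{Kogiso-Wakui_OJM} and used as a black box. You correctly recognise this in your final sentence, and that citation is exactly what the paper does.

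Your sketch of what such a proof would look like is broadly reasonable as an outline of the argument in \cite{Kogiso-Wakui, Kogiso-Wakui_OJM}, but one step is misdirected. The monomial relating $\langle D(\alpha)\rangle$ and $\langle\varGamma_\alpha\rangle$ is a statement about unoriented Kauffman brackets, so its exponent cannot depend on the writhe; it is precisely (a sign-adjusted version of) $\mathrm{wt}(\alpha)$ alone. Once that framing defect is identified, the proposition follows immediately by substituting into~(1.1), and the $-\mathrm{wr}(\alpha)$ in $\widetilde{\mathrm{wt}}(\alpha)=-\mathrm{wr}(\alpha)-\mathrm{wt}(\alpha)$ is exactly the writhe contribution already present in~(1.1). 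There is therefore no need to invoke Nagai--Terashima's combinatorial writhe formula here; that formula is a separate tool, used later in the paper (Theorem~\ref{2-8} and Proposition~\ref{2-9}) to make $\mathrm{wr}(\alpha)$ itself computable from the continued fraction, not to establish Proposition~\ref{2-4}.
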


\begin{rem}
Note that the above replacement $t=A^{-4}$ is inverse of that in the papers \cite{Kogiso-Wakui, Kogiso-Wakui_OJM}. 
\end{rem} 

Let $\alpha \in (0,1)\cap \mathbb{Q}$, and define two paths $\gamma _L$ and $\gamma _R$ as in Figure~\ref{fig4}, 
that is, $\gamma _L$ is the path from $\alpha$ to $\frac{0}{1}$ along the left oblique edge, and 
$\gamma _R$ is the path from $\alpha$ to $\frac{1}{1}$ along the right oblique edge. 
We note that $\frac{1}{1}$ is always on the right oblique side in $\text{YAT}(\alpha )$. 

\begin{figure}[hbtp]
\centering\includegraphics[width=3cm]{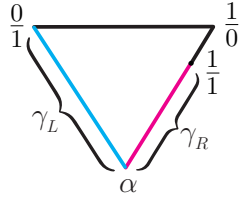}
\caption{the paths $\gamma _L$ and $\gamma _R$}\label{fig4}
\end{figure}

By counting the number of edges on the left and right oblique sides, respectively, 
$W(\gamma _L)$ and $W(\gamma _R)$ are given as in the following lemma. 

\begin{lem}\label{2-6}
For an $\alpha \in (0,1)\cap \mathbb{Q}$ with expression $\alpha^{-1}=[c_1, \ldots , c_l]^-$ 
\begin{equation}\label{eq2-3}
W(\gamma _L)=(-t^{-1})^l, \qquad 
W(\gamma _R)=(-t)^{l^{\prime}}, 
\end{equation}
where 
$l^{\prime}:= \sum_{j=1}^l (c_j-2) +1$. 
They are the lowest and highest term of $\langle \varGamma _{\alpha }\rangle $, respectively. 
\end{lem}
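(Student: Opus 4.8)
The plan is to translate the continued fraction data of $\alpha^{-1}=[c_1,\ldots,c_l]^-$ into combinatorial information about the shape of the ancestor triangle $\mathrm{YAT}(\alpha)$, and then simply count edges along the two oblique sides. The key observation is that the negative continued fraction expansion of $\alpha^{-1}$ records precisely the path from $\alpha$ down to the top of the tree in $\mathrm{YAT}(\alpha)$: each index $c_j$ corresponds to a ``run'' of consecutive turns on one side of the Stern-Brocot tree, with the lengths of the runs alternating between the left and right oblique sides. Concretely, I would first recall (or re-derive using Lemma 2.2, which already encodes how $\lceil\cdot\rceil$ changes under Farey sums) that the number of fundamental triangles having an edge on the left oblique side of $\mathrm{YAT}(\alpha)$ is $l$, while the number having an edge on the right oblique side is $\sum_{j=1}^l(c_j-2)+1=l'$. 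This is the heart of the matter and I expect it to be the main obstacle: one must set up the correspondence between the subtractive Euclidean algorithm underlying $[c_1,\ldots,c_l]^-$ and the left/right branching structure of the path $\alpha\to\frac01$ (resp. $\alpha\to\frac11$) in the tree, keeping careful track of the endpoints $\frac01$ and $\frac11$.

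Granting that count, the rest is immediate. Along $\gamma_L$ the path runs down the left oblique side, so every edge it traverses is a left oblique edge of some fundamental triangle, carrying the weight $-t^{-1}$; since $\gamma_L$ passes through exactly $l$ such edges, $W(\gamma_L)=(-t^{-1})^l$. Symmetrically, $\gamma_R$ traverses only right oblique edges, each weighted $-t$, and there are $l'$ of them, giving $W(\gamma_R)=(-t)^{l'}$. Here one uses the fact, noted just before the lemma, that $\frac11$ always lies on the right oblique side of $\mathrm{YAT}(\alpha)$, so $\gamma_R$ is well-defined and terminates at $\frac11$.

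Finally, to see that these are the lowest and highest terms of $\langle\varGamma_\alpha\rangle$, I would argue that any upward path $\gamma$ from $\alpha$ to $\frac01$ or $\frac11$ is obtained from $\gamma_L$ (resp. $\gamma_R$) by replacing some maximal left-leaning stretches with right-leaning ones (resp. vice versa); each such local modification at a fundamental triangle trades a factor $-t^{-1}$ for a factor $-t$ (resp. $-t$ for $-t^{-1}$), strictly raising (resp. lowering) the $t$-degree. Since $W(\gamma_L)$ is the unique monomial of minimal degree $-l$ and $W(\gamma_R)$ the unique monomial of maximal degree $l'$ among all $W(\gamma)$, and since all contributions $W(\gamma)$ to $\langle\varGamma_\alpha\rangle=\sum_\gamma W(\gamma)$ share the same sign pattern at the extremes (no cancellation can occur at the top or bottom degree, as each extreme is attained by a single path), these monomials survive in the sum as the lowest and highest terms. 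The degree bookkeeping can be made precise using Lemma 2.2 to track, triangle by triangle, how many left versus right edges a path contains.
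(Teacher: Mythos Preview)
Your approach is the paper's: the lemma is stated with essentially no proof beyond the sentence ``By counting the number of edges on the left and right oblique sides,'' and the structural correspondence between $\mathrm{YAT}(\alpha)$ and the expansion $\alpha^{-1}=[c_1,\ldots,c_l]^-$ is spelled out immediately afterward (Figure~5 and the surrounding text). Your plan to read off the edge counts from the continued-fraction data is precisely what is intended, and your extremality argument (unique extremal paths, hence no cancellation at top and bottom degree) is more than the paper supplies.

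One small correction: you assert that ``the number of fundamental triangles having an edge on the right oblique side of $\mathrm{YAT}(\alpha)$ is $l'$,'' but this is off by one. The right oblique side runs from $\alpha$ all the way up to $\frac{1}{0}$ and carries $l'+1$ edges; for example, when $\alpha=\frac{1}{c_1}$ one has $l'=c_1-1$, while the right side has vertices $\frac{1}{0},\frac{1}{1},\ldots,\frac{1}{c_1-1},\frac{1}{c_1}$ and hence $c_1$ edges. The point is that $\gamma_R$ terminates at $\frac{1}{1}$, not $\frac{1}{0}$, and therefore traverses only $l'$ of those edges. With that adjustment your count is correct and the argument goes through.
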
 

\par \medskip 
By using the lemma Theorem~\ref{1-3} can be proved as follows. 

\par \medskip \noindent 
{\bf Proof of Theorem~\ref{1-3}.} \ 
By Proposition~\ref{2-4} 
$V_{\alpha }(t)=V_{\alpha ^{-1}}(t)=(-t^{\frac{3}{4}})^{\widetilde{\mathrm{wt}}(\alpha ^{-1})}\langle \varGamma_{\alpha ^{-1}}\rangle $. 
So, by Lemma~\ref{2-6} the leading term of $V_{\alpha }(t)$ is given by 
$(-t^{\frac{3}{4}})^{\widetilde{\mathrm{wt}}(\alpha ^{-1})}W(\gamma_R)=(-t)^{\frac{3}{4}\widetilde{\mathrm{wt}}(\alpha ^{-1})+l^{\prime}}$. 
Since 
$\widetilde{\mathrm{wt}}(\alpha ^{-1})
=-\mathrm{wr}(\alpha ^{-1})-\mathrm{wt}(\alpha ^{-1}) 
=-\mathrm{wr}(\alpha )+l^{\prime}-l$, 
we have 
$(-t^{\frac{3}{4}})^{\widetilde{\mathrm{wt}}(\alpha ^{-1})}W(\gamma_R)
=(-t)^{-\frac{3}{4}\mathrm{wr}(\alpha )-\frac{3}{4}l+\frac{7}{4}l^{\prime}}$. 
This implies the equation \eqref{eq1-4}. 
\qed 

\par \medskip 
Let us explain that some relationship between negative continued fractions and ancestor triangles. 
For an $\alpha \in (0,1)\cap \mathbb{Q}$, 
we express its inverse as 
$\alpha ^{-1}=[c_1, \ldots , c_l]^-$. Then 
the vertices in the both 
oblique sides of $\text{YAT}(\alpha )$ are 
as in Figure~\ref{fig5}.   

\begin{figure}[hbtp]
\centering\includegraphics[height=4cm]{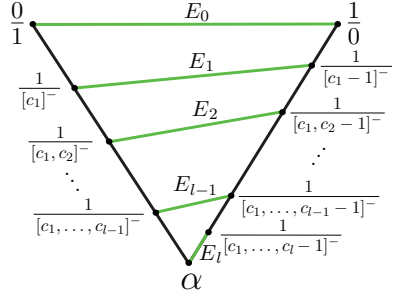}
\caption{vertices of $\text{YAT}(\alpha )$ and negative continued fractions}\label{fig5}
\end{figure}

For each $j$ 
the number of  the fundamental 
triangles in  the region enclosed by $E_{j-1}, E_j$, 
the both oblique sides is $c_1$ for $j=1$, and is $(c_j-1)$ for the others $j$. 
We note that if $c_l=2$, then end points of $E_{l-1}$ and $E_l$ coincide since $[c_1, \ldots , c_l-1]^-=[c_1, \ldots , c_{l-1}-1]^-$. 

\begin{exam}
If $\alpha =\frac{7}{11}$, then 
$\alpha ^{-1}=\frac{11}{7}=[2, 3, 2, 2]^-$ and $\mathrm{YAT}\bigl( \frac{7}{11}\bigr)$ is given by Figure~\ref{fig6}. 
\end{exam} 

\begin{figure}[hbtp]
\centering\includegraphics[height=4cm]{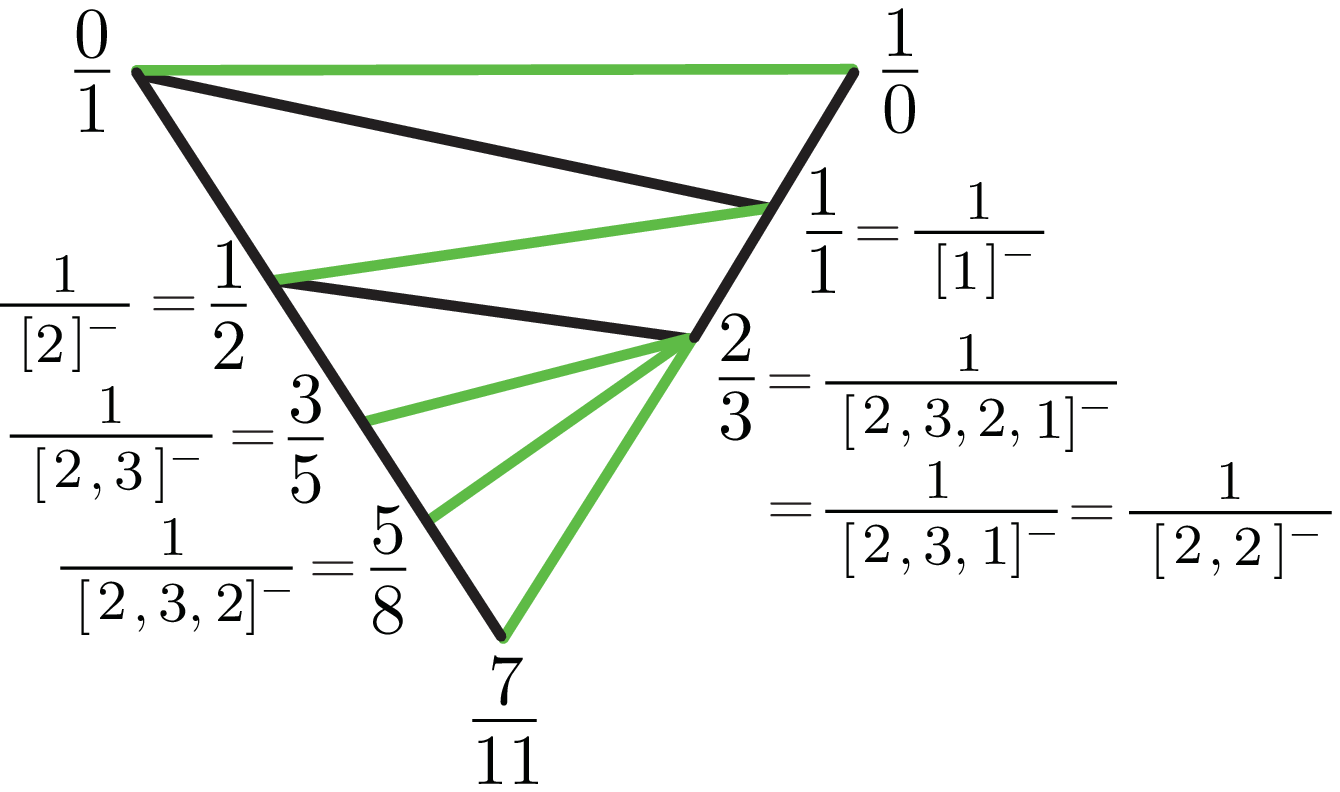}
\caption{vertices of $\text{YAT}\bigl( \frac{7}{11}\bigr)$}\label{fig6}
\end{figure}

By a direct application of Nagai and Terashima's results \cite{Nagai_Terashima} 
one can obtain a recursive formula  \cite[Theorem 5.2]{Kogiso-Wakui_OJM} to compute the writhe $\text{wr}(\alpha )$ from the regular continued fraction expansion of $\alpha $. 
By using the conversion between regular and negative continued fraction expansions given in \cite{Hirzebruch, HirzebruchZagier, M-GO2}, 
we have another recursive formula to compute $\text{wr}(\alpha )$. 
To explain the formula let us recall the definition of a Seifert path, which is introduced by Nagai and Terashima \cite{Nagai_Terashima}. 
\par 
The rational numbers are classified into three types such as $\frac{1}{1}, \frac{1}{0}, \frac{0}{1}$-types. 
A rational number $\frac{x}{a}$ is called $\frac{1}{1}$-type  if $x\equiv 1,\ a\equiv 1\ (\text{mod}\ 2)$, and $\frac{1}{0}$-type and $\frac{0}{1}$-type are similarly defined. 

Let $\alpha $ be a rational number in $(0,1)$. 
All of types appear in the vertices in each fundamental triangle of $\text{YAT}(\alpha )$. 
A \textit{Seifert path} of $\alpha $ is a downward path in $\text{YAT}(\alpha )$, which is started from $\frac{1}{0}$ to $\alpha $ satisfying the following condition: 
The end points of any edge in the path consist of $\frac{1}{1}$- and $\frac{1}{0}$-types, or consist of $\frac{1}{0}$- and $\frac{0}{1}$-types. 
If the denominator of $\alpha $ is odd, then a Seifert path is unique. 
We denote it by $\gamma _{\alpha }$. 
If the denominator of $\alpha $ is even, namely $\alpha $ is of type $\frac{1}{0}$, then there are exactly two Seifert paths. 
In this case we denote by $\gamma _{\alpha }$ the Seifert path whose vertices consist of $\frac{1}{0}$- and $\frac{0}{1}$-types, 
and denote by $\gamma _{\alpha }^{\prime}$ the remaining Seifert path. 
\par 
Following \cite{Nagai_Terashima} let us explain how to define a sign $t_{\alpha }(\Delta )$ for each fundamental triangle $\Delta $ in $\text{YAT}(\alpha )$. 
We consider the successive sequence of the fundamental triangles of $\text{YAT}(\alpha )$ whose initial term is the fundamental triangle with the edge joining the vertices $\frac{1}{0}$ and $\frac{0}{1}$. 
If $\Delta $ is the initial triangle, then  we set 
$$t_{\alpha }(\Delta ):=\begin{cases}
1 & \text{if $\alpha$ is of $\frac{1}{1}$-type}, \\[0.1cm]  
-1 & \text{otherwise}. 
\end{cases}$$
Assume that for the previous triangle $\Delta_-$ of $\Delta$, the sign $t_{\alpha }(\Delta_-)$ is defined. 
Then $t_{\alpha }(\Delta )$ is defined by
$$t_{\alpha }(\Delta )=\begin{cases}
t_{\alpha }(\Delta _-) & \text{if $\gamma _{\alpha}$ does not pass through between $\Delta _-$ and $\Delta $},\\[0.1cm]  
-t_{\alpha }(\Delta _-) & \text{otherwise}.
\end{cases}$$

\begin{thm}[{\bf Nagai and Terashima \cite[Theorem 4.4]{Nagai_Terashima}}]\label{2-8}
For an $\alpha \in (0,1)\cap \mathbb{Q}$ the writhe of $D(\alpha )$ is given by 
\begin{equation}\label{eq2-4}
-\mathrm{wr}(\alpha ) =\sum\limits_{\text{the fundamental triangles $\Delta $ in $\text{YAT}(\alpha )$}} t_{\alpha }(\Delta ). 
\end{equation}
\end{thm}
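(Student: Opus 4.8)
The plan is to prove \eqref{eq2-4} by setting up a bijection between the fundamental triangles of $\mathrm{YAT}(\alpha)$ and the crossings of a standard oriented diagram of $L(\alpha)$, and then showing that the sign of the crossing attached to a fundamental triangle $\Delta$ equals $-t_\alpha(\Delta)$; summing over all crossings gives $\mathrm{wr}(\alpha)=-\sum_{\Delta}t_\alpha(\Delta)$. This is of course the content of Theorem~4.4 of \cite{Nagai_Terashima}, so the task is really to transport that argument into the ancestor-triangle language used here.

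First I would make the bijection explicit. Writing $\alpha^{-1}=[c_1,\dots,c_l]^-$, the triangle $\mathrm{YAT}(\alpha)$ is triangulated into $l+l'$ fundamental triangles, where $l$ and $l'=\sum_{j}(c_j-2)+1$ count the edges on its left and right oblique sides (cf.\ Lemma~\ref{2-6}); the successive sequence of fundamental triangles starting from the one carrying the edge joining $\frac{1}{0}$ and $\frac{0}{1}$ equips them with a canonical linear order, which is precisely the order used in the definition of $t_\alpha$. On the diagram side, the (negative) continued fraction exhibits $D(\alpha)$ as the closure of a rational tangle obtained from $l+l'$ successive half-twists, read off block by block from the $c_j$ as in Figure~\ref{fig5}; matching the two orders produces the bijection $\Delta\leftrightarrow\text{crossing}$. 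A routine but necessary check here is that the degenerate case $c_l=2$, where $E_{l-1}$ and $E_l$ share an endpoint, is handled correctly.

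The heart of the argument is the orientation bookkeeping. Each vertex $\frac{x}{a}$ of the Stern--Brocot tree records, through its $\frac{1}{1}$/$\frac{1}{0}$/$\frac{0}{1}$-type, the mod-$2$ homology class of the corresponding rational subtangle, hence the connectivity pattern of its four endpoints; reading $D(\alpha)$ outward from the side joining $\frac{1}{0}$ and $\frac{0}{1}$, this determines at each crossing whether the two strands through it are coherently or oppositely oriented, and therefore the sign of that crossing. I would then verify two facts. (i) For the initial fundamental triangle the sign of the corresponding crossing is computed directly from the orientation convention of Section~1, giving $-1$ when $\alpha$ is of $\frac{1}{1}$-type and $+1$ otherwise; that is, it equals $-t_\alpha(\Delta)$. (ii) In passing from $\Delta_-$ to the next triangle $\Delta$ across their common edge, the relevant strand orientation is preserved exactly when the Seifert path $\gamma_\alpha$ does not pass between $\Delta_-$ and $\Delta$, and reversed otherwise, which is exactly the recursion defining $t_\alpha(\Delta)=\pm t_\alpha(\Delta_-)$. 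Combining (i) and (ii) by induction along the successive sequence gives $\mathrm{sgn}(\text{crossing of }\Delta)=-t_\alpha(\Delta)$ for every $\Delta$, and summation yields \eqref{eq2-4}. For the even-denominator ($\frac{1}{0}$-type) case one checks that the distinguished Seifert path $\gamma_\alpha$ (the $\frac{1}{0}$/$\frac{0}{1}$ one) is the correct choice for the orientation fixed in Section~1, whereas $\gamma_\alpha'$ would govern $D_{+-}(\alpha)$.

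The \emph{main obstacle} is establishing (ii) rigorously: one must propagate both the connectivity type and the induced strand orientations of every rational subtangle along the reading of $D(\alpha)$ and confirm that ``$\gamma_\alpha$ passing between two consecutive fundamental triangles'' is exactly the event that flips the sign. This is the core lemma of \cite{Nagai_Terashima} re-expressed in the ancestor-triangle setting, and the careful case analysis of the local orientation configurations at a fundamental triangle is where the real work lies.
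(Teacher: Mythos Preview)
The paper does not give its own proof of Theorem~\ref{2-8}: the statement is quoted from \cite[Theorem 4.4]{Nagai_Terashima} and used as a black box (it is the input to the proof of Proposition~\ref{2-9}, not an output of the paper). So there is no proof here to compare your proposal against.

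Your outline is a reasonable sketch of the Nagai--Terashima argument transported into the ancestor-triangle language, and you have correctly identified the crux as the local sign-propagation lemma (your point (ii)). One small remark: the crossing count you quote, $l+l'$, is indeed the total number of fundamental triangles (since $l+l'=\sum_j c_j-l+1$ matches the count $c_1+\sum_{j\ge 2}(c_j-1)$ given just before Example~2.7), but the diagram $D(\alpha)$ in Section~1 is presented via the \emph{regular} continued fraction of $\alpha$, not the negative one of $\alpha^{-1}$; your bijection step therefore implicitly uses the standard conversion between the two expansions, which you would need to make explicit if you were writing out the proof in full.
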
 

If $\alpha \in (0,1)\cap \mathbb{Q}$ is $\frac{1}{0}$-type, then one can also define another sign $t_{\alpha }^{\prime}(\Delta )$ for each fundamental triangle $\Delta $ in $\text{YAT}(\alpha )$ by using $\gamma _{\alpha }^{\prime}$ as follows. 
For the initial triangle $\Delta $  we set 
$t_{\alpha }^{\prime}(\Delta ):=-1$. 
Assume that for the previous triangle $\Delta_-$ of $\Delta$, the sign $t_{\alpha }^{\prime}(\Delta_-)$ is defined. 
Then $t_{\alpha }^{\prime}(\Delta )$ is defined by
$$t_{\alpha }^{\prime}(\Delta )=\begin{cases}
t_{\alpha }^{\prime}(\Delta _-) & \text{if $\gamma _{\alpha}^{\prime}$ does not pass through between $\Delta _-$ and $\Delta $},\\[0.1cm]  
-t_{\alpha }^{\prime}(\Delta _-) & \text{otherwise}.
\end{cases}$$

Then by the same way of proof of Theorem~\ref{2-8}, it is shown that the equation
\begin{equation}\label{eq2-5}
-\text{wr}_{+-}(\alpha ) =\sum\limits_{\text{the fundamental triangles $\Delta $ in $\text{YAT}(\alpha )$}}t_{\alpha }^{\prime}(\Delta )
\end{equation}
\par \noindent 
holds, where $\mathrm{wr}_{+-}(\alpha )=\mathrm{wr}(D_{+-}(\alpha ))$. 
By rewriting the formulas \eqref{eq2-4} and \eqref{eq2-5} we have the following proposition. 

\begin{figure}[hbtp]
\centering\includegraphics[height=4.3cm]{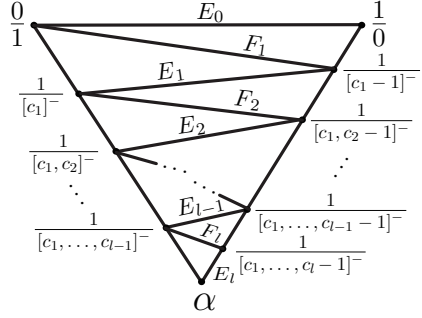}
\caption{vertices and edges of $\text{YAT}(\alpha )$}\label{fig7}
\end{figure}

\begin{prop}\label{2-9}
Let $\alpha \in \mathbb{Q}\cap (0,1)$ with expression 
$\alpha ^{-1}=[c_1, \ldots , c_l]^-$. 
Let $E_0$ be the edge between $\frac{0}{1}$ and $\frac{1}{0}$, and 
for each $j=1, \ldots , l$ let $E_j, F_j$ be the edges between $\frac{1}{[c_1, \ldots , c_j]^-}$ and $\frac{1}{[c_1, \ldots , c_j-1]^-}$, and between $\frac{1}{[c_1, \ldots , c_{j-1}]^-}$ and $\frac{1}{[c_1, \ldots , c_j-1]^-}$, respectively. 
Denoted by $z_j$ is the number of times that the Seifert path $\gamma_{\alpha}$ crosses $\mathrm{YAT}(\alpha )$ 
while it starts from $\frac{1}{0}$ and comes first to one of the edge points of $E_j$, 
where if $\gamma_{\alpha}$ passes through $E_0$, then we count it. 
\par 
Let $t_{\alpha }([E_{j-1}, E_j])$ be the recursively defined number as follows: 
$$t_{\alpha }([E_0, E_1]):=
\begin{cases}
-c_1 & \text{if $z_1=0$},\\[0.1cm]   
c_1 & \text{if $z_1=1$},\\[0.1cm]   
c_1-2 & \text{if $z_1=2$}, 
\end{cases}$$
and for $j\geq 2$ 
$$t_{\alpha }([E_{j-1}, E_j]):=
\begin{cases}
(-1)^{z_j-1}(c_j-1) & \text{if $\gamma_{\alpha}$ does not pass through $F_j$},\\[0.1cm]  
(-1)^{z_j}(c_j-3)  &  \text{otherwise}.
\end{cases}$$
Then we have 
\begin{equation}\label{eq2-6}
\mathrm{wr}(\alpha )=\sum\limits_{j=1}^l t_{\alpha }([E_{j-1}, E_j]). 
\end{equation}
Furthermore, if $\alpha $ is $\frac{1}{0}$-type, then $\mathrm{wr}_{+-}(\alpha )$ can be computed as follows. 
Denoted by $z_j^{\prime}$ is the number of times that the Seifert path $\gamma_{\alpha}^{\prime}$ crosses $\mathrm{YAT}(\alpha )$ 
while it starts from $\frac{1}{0}$ and comes first to one of the edge points of $E_j$. 
Let $t_{\alpha }^{\prime}([E_{j-1}, E_j])$ be the recursively defined number as follows: 
$t_{\alpha }^{\prime}([E_0, E_1]):=-c_1$, and for $j\geq 2$
$$t_{\alpha }^{\prime}([E_{j-1}, E_j]):=
(-1)^{z_j^{\prime}-1}(c_j-1).$$
Then we have 
\begin{equation}\label{eq2-7}
\mathrm{wr}_{+-}(\alpha ) =\sum\limits_{j=1}^l t_{\alpha }^{\prime}([E_{j-1}, E_j]). 
\end{equation}
\end{prop}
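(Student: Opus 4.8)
The plan is to obtain \eqref{eq2-6} and \eqref{eq2-7} by regrouping the Nagai--Terashima sums \eqref{eq2-4} and \eqref{eq2-5} according to the decomposition of $\mathrm{YAT}(\alpha )$ into the $l$ \emph{bands} cut out by the edges $E_0, E_1, \ldots , E_l$, where the $j$-th band is the region bounded by $E_{j-1}$, $E_j$ and the two oblique sides. First I would record, from the description of $\mathrm{YAT}(\alpha )$ in Figures~\ref{fig5} and~\ref{fig7}, the structural facts that drive everything: the $j$-th band contains exactly $c_1$ fundamental triangles if $j=1$ and exactly $c_j-1$ of them if $j\geq 2$ (the degenerate situation $c_l=2$, where the endpoints of $E_{l-1}$ and $E_l$ coincide, being isolated and checked separately); these triangles form a block of consecutive terms of the successive sequence used to define the signs $t_\alpha (\Delta )$; and, for $j\geq 2$, the only edge of $\mathrm{YAT}(\alpha )$ interior to the $j$-th band that a Seifert path can cross is the single edge of $F_j$ incident to the corner vertex $\tfrac{1}{[c_1,\ldots ,c_{j-1}]^-}$, so that $\gamma_\alpha$ (resp.\ $\gamma_\alpha'$) crosses at most one interior edge of each band and crosses one precisely when it passes through $F_j$, the affected triangle then being the corner triangle adjacent to that edge.

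Granting these facts, the sign-flip rule defining $t_\alpha(\Delta )$ assigns, for $j\geq 2$, a common sign $\varepsilon_j=(-1)^{z_j}$ to all $c_j-1$ triangles of the $j$-th band when $\gamma_\alpha$ misses $F_j$, and the sign $\varepsilon_j$ to the corner triangle together with $-\varepsilon_j$ to the remaining $c_j-2$ triangles when $\gamma_\alpha$ meets $F_j$; summing over the band then gives $\varepsilon_j(c_j-1)$ and $\varepsilon_j(3-c_j)$ respectively, which are exactly $-t_\alpha([E_{j-1},E_j])$ in the two cases of the definition. For $j=1$ the same count applies to the $c_1$ triangles of the first band, the three sub-cases $z_1=0,1,2$ recording whether $\gamma_\alpha$ has accumulated $0$, $1$ or $2$ sign flips by the time it first reaches an endpoint of $E_1$ (the crossing of $E_0$ being counted), which combined with the base value $t_\alpha(\Delta_{\mathrm{init}})=\pm1$ prescribed by the type of $\alpha$ makes the sum of $t_\alpha(\Delta)$ over the first band equal to $-t_\alpha([E_0,E_1])$ in all three sub-cases. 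Adding the $l$ band-sums and invoking Theorem~\ref{2-8} yields \eqref{eq2-6}. For \eqref{eq2-7} one repeats the argument with $\gamma_\alpha'$ in place of $\gamma_\alpha$: since every edge of $\gamma_\alpha'$ joins a $\tfrac10$-type vertex to a $\tfrac01$-type vertex, $\gamma_\alpha'$ never meets any corner edge $F_j$, so each band contributes simply $\varepsilon_j'(c_j-1)=(-1)^{z_j'}(c_j-1)=-t_\alpha'([E_{j-1},E_j])$ and \eqref{eq2-7} follows from \eqref{eq2-5} (with the initial sign now always $-1$, matching $t_\alpha'([E_0,E_1])=-c_1$).

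I expect the main obstacle to be establishing the structural picture of the bands in the first step, namely that a Seifert path crosses at most one interior edge of each band and that this crossing, when it occurs, is forced onto the corner triangle precisely by the condition that $\gamma_\alpha$ passes through $F_j$. This is the point at which the combinatorics of negative continued fractions really enters: one must follow, band by band, along which oblique side $\gamma_\alpha$ currently runs, at which $E_j$ it switches sides, and how this is dictated by the $\tfrac11$/$\tfrac10$/$\tfrac01$-type pattern constraining Seifert paths; the conversion between the regular and the negative continued fraction expansions of $\alpha$ is what ultimately controls this, and it also underlies the bookkeeping that pins down $\varepsilon_j=(-1)^{z_j}$. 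Once that local description is in place, the sign count, the elementary identities $\varepsilon_j(c_j-1)=-t_\alpha([E_{j-1},E_j])$ and $\varepsilon_j(3-c_j)=-t_\alpha([E_{j-1},E_j])$, the three-case check for the first band, and the degenerate case $c_l=2$ are all routine.
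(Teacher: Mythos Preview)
Your plan is essentially the paper's own proof. Both arguments partition $\mathrm{YAT}(\alpha)$ into the bands $[E_{j-1},E_j]$, show that the Nagai--Terashima sign-sum over each band equals $-t_\alpha([E_{j-1},E_j])$, and then invoke Theorem~\ref{2-8} (resp.\ \eqref{eq2-5}). Your two-case split for $j\ge 2$ (according to whether $\gamma_\alpha$ passes through $F_j$, with the running sign packaged as $\varepsilon_j=(-1)^{z_j}$) is exactly the paper's four cases $(z_j-z_{j-1}\in\{0,1,2\}$, the value $1$ split by whether the extra crossing is at $E_{j-1}$ or $F_j)$ collapsed: cases (1),(2) give the uniform sign $\varepsilon_j$ on all $c_j-1$ triangles, and cases (3),(4) give $\varepsilon_j$ on the last triangle and $-\varepsilon_j$ on the other $c_j-2$, matching your computation.

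One point to fix in the $\mathrm{wr}_{+-}$ part: you have the two Seifert paths swapped. By the paper's convention, when $\alpha$ is of $\tfrac{1}{0}$-type it is $\gamma_\alpha$ whose vertices are of $\tfrac{1}{0}$- and $\tfrac{0}{1}$-type, and $\gamma_\alpha'$ is the remaining path (vertices of $\tfrac{1}{0}$- and $\tfrac{1}{1}$-type). More importantly, the type restriction alone does not obviously forbid $\gamma_\alpha'$ from traversing $F_j$, since the endpoints $\tfrac{1}{[c_1,\ldots,c_{j-1}]^-}$ and $\tfrac{1}{[c_1,\ldots,c_j-1]^-}$ can in principle be of $\tfrac{1}{0}$- and $\tfrac{1}{1}$-type. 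The single-branch form of $t_\alpha'([E_{j-1},E_j])$ does force $\gamma_\alpha'$ never to cross $F_j$ (for $c_j\ge 3$), but this needs its own verification; the paper simply says \eqref{eq2-7} follows ``by a quite similar argument'', i.e.\ by redoing the band-by-band case check with $\gamma_\alpha'$, rather than by the type shortcut you propose.
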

\begin{proof}
To prove \eqref{eq2-6}, for each $j=1, \ldots , l$, 
let $\gamma _{\alpha}^{(j)}$ be the subpath of $\gamma _{\alpha}$ 
from the edge point of $E_{j-1}$ first reached  
to the edge point of $E_j$ first reached. 
We also denote by $[E_{j-1}, E_j]$ the quadrilateral enclosed by $E_{j-1}, E_j$ and the both oblique sides. 
By Theorem~\ref{2-8} it is sufficient to show that $-t_{\alpha }([E_{j-1}, E_j])$ coincides with the sum of signs of the fundamental triangles in $[E_{j-1}, E_j]$. 
For $j>1$ it can be verified by dividing into the following four cases: 
\begin{enumerate}
\item[(1)] $z_j=z_{j-1}$,  
\item[(2)] $z_j=z_{j-1}+1$ and  $\gamma _{\alpha}^{(j)}$ passes through $E_{j-1}$, 
\item[(3)] $z_j=z_{j-1}+1$ and  $\gamma _{\alpha}^{(j)}$ passes through $F_j$, 
\item[(4)] $z_j=z_{j-1}+2$. 
\end{enumerate}

In the case of Part (1), $\gamma _{\alpha}^{(j)}$ does not pass through $E_{j-1}$ and $F_j$. 
So, the sum of signs of the fundamental triangles in $[E_{j-1}, E_j]$ is $(-1)^{z_{j-1}}(c_j-1)$, and therefore 
multiplying it by $-1$ we have $(-1)^{z_{j-1}-1}(c_j-1)=(-1)^{z_j-1}(c_j-1)=t_{\alpha }([E_{j-1}, E_j])$. 
In all other cases, by the same argument, we see that the desired equation holds. 
\par 
For $j=1$ the desired equation can be also verified by dividing into the three cases where 
$z_j=z_{j-1}$, $z_j=z_{j-1}+1$ and $z_j=z_{j-1}+2$. 
\par 
The equation \eqref{eq2-7} can be obtained by a quite similar argument. 
\end{proof} 

\begin{rem} 
By \cite[Lemma 4.1(3)]{Kogiso-Wakui_OJM}, we also have 
\begin{equation}\label{eq2-8}
\mathrm{wr}_{+-}\bigl(\alpha ) =-\mathrm{wr}( 1-\alpha ) . 
\end{equation}
\end{rem}

\par \medskip 
By Proposition~\ref{2-8} we have: 

\par\medskip 
\noindent 
\begin{thm}\label{2-11}
Let $\alpha \in \mathbb{Q}\cap (0,1)$, and express its inverse as 
$\alpha ^{-1}=[c_1, \ldots , c_l]^-$. 
Define $E_j, F_j$ and  $z_j$ as in Proposition~\ref{2-9} and 
set $\alpha _j:=\frac{1}{[c_1, \ldots , c_j]^-},\ \beta_j:=\frac{1}{[c_1, \ldots , c_j-1]^-}$,  
$z_j(\alpha _j):=z_j$ for $j=1, \ldots , l$. 
Then the writhe $\mathrm{wr}(\alpha )$ can be recursively computed as follows: 
\begin{enumerate}
\item[$(1)$] If $l=1$, that is, $\alpha =\frac{1}{c_1}$, then 
$\mathrm{wr}(\alpha )=(-1)^{c_1}c_1$. 
In addition, if $c_1$ is even, then $\mathrm{wr}_{+-}(\alpha )=-c_1$.  
\item[$(2)$] If $l=2$, that is, $\frac{1}{\alpha }=[c_1, c_2]^-$, then 
$\mathrm{wr}(\alpha )=(-1)^{(c_1-1)c_2}c_1+(-1)^{c_1}c_2-1$. 
In addition, if both $c_1, c_2$ are odd, then $\mathrm{wr}_{+-}(\alpha )=-c_1+c_2-1$.  
\item[$(3)$] Let $l\geq 3$ and set 
$$\epsilon (c_l):=
\begin{cases}
+ & \text{if $c_l$ is even},\\ 
- & \text{if $c_l$ is odd}, 
\end{cases}$$
and $\mathrm{wr}_{++}(\alpha ):=\mathrm{wr}(\alpha )$. Then  
\begin{enumerate}
\item[$(i)$] if $\alpha _{l-1}$ is $\frac{1}{0}$-type and $\alpha $ is $\frac{1}{1}$-type, then 
$$\mathrm{wr}(\alpha ) =\mathrm{wr}_{+-}(\alpha _{l-1}) +(-1)^{z_{l-1}(\alpha _{l-1})+\frac{1+(-1)^{c_l}}{2}+1}(c_l-1),$$
\item[$(ii)$] if $\alpha _{l-1}$ is $\frac{0}{1}$-type and $\alpha $ is $\frac{1}{1}$-type, then 
$$\mathrm{wr}(\alpha )
=\mathrm{wr}_{+\epsilon (c_l)}(\alpha _{l-2}) +(-1)^{z_{l-2}(\alpha _{l-2})+\frac{1+(-1)^{c_{l-1}}}{2}+\frac{1-(-1)^{c_l}}{2}}(-c_{l-1}+c_l+2),$$
\item[$(iii)$] if $\alpha _{l-1}$ is $\frac{1}{0}$-type and $\alpha $ is $\frac{0}{1}$-type, then 
$$\mathrm{wr}(\alpha ) =\mathrm{wr}(\alpha _{l-1}) +(-1)^{z_{l-1}(\alpha _{l-1})+\frac{1+(-1)^{c_l}}{2}}(c_l-1),$$
\item[$(iv)$] $\alpha _{l-1}$ is $\frac{1}{1}$-type and $\alpha $ is $\frac{0}{1}$-type, then 
$$\mathrm{wr}(\alpha )=\mathrm{wr}(\alpha _{l-2}) +(-1)^{z_{l-2}(\alpha _{l-2})+\frac{1+(-1)^{c_{l-1}}}{2}}(-c_{l-1}+c_l+2),$$
\item[$(v)$] if $\alpha _{l-1}$ is $\frac{0}{1}$-type and $\alpha $ is $\frac{1}{0}$-type, then
\begin{align*}
\mathrm{wr}(\alpha ) &=\mathrm{wr}(\alpha _{l-1}) +(-1)^{z_{l-1}(\alpha _{l-1})+\frac{1+(-1)^{c_l}}{2}}(c_l-1),\\ 
\mathrm{wr}_{+-}(\alpha ) &=
\mathrm{wr}_{+\epsilon (c_l-1)}(\alpha _{l-2}) +(-1)^{z_{l-2}(\alpha _{l-2})+\frac{1+(-1)^{c_{l-1}}}{2}+\frac{1+(-1)^{c_l}}{2}}(c_{l-1}+c_l-2) , 
\end{align*}
\item[$(vi)$] if $\alpha _{l-1}$ is $\frac{1}{1}$-type and $\alpha $ is $\frac{1}{0}$-type, then 
\begin{align*}
\mathrm{wr}(\alpha ) &=\mathrm{wr}(\alpha _{l-2}) +(-1)^{z_{l-2}(\alpha _{l-2})+\frac{1+(-1)^{c_{l-1}}}{2}}(-c_{l-1}+c_l+2),\\ 
\mathrm{wr}_{+-}(\alpha ) &=\mathrm{wr}(\alpha _{l-1}) +(-1)^{z_{l-2}(\alpha _{l-2})+\frac{1+(-1)^{c_{l-1}}}{2}+1}(c_l-1). 
\end{align*}
\end{enumerate}
\end{enumerate}
\end{thm}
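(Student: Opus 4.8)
The plan is to prove all of the displayed formulas directly from Proposition~\ref{2-9}, which writes $\mathrm{wr}(\alpha)=\sum_{j=1}^{l}t_{\alpha}([E_{j-1},E_j])$ and, when $\alpha$ is $\frac{1}{0}$-type, $\mathrm{wr}_{+-}(\alpha)=\sum_{j=1}^{l}t_{\alpha}'([E_{j-1},E_j])$, as sums of the strip-contributions of $\mathrm{YAT}(\alpha)$. The cases $l=1,2$ form the base: one evaluates $t_{\alpha}([E_0,E_1])$ and $t_{\alpha}([E_1,E_2])$ --- equivalently, lists the few fundamental triangles of $\mathrm{YAT}(\alpha)$, locates the Seifert path or paths and sums the signs $t_{\alpha}(\Delta)$, $t_{\alpha}'(\Delta)$ --- and reads off the closed forms through Theorem~\ref{2-8} and \eqref{eq2-5}. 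For $l\ge 3$ the idea is to peel off the last one or two strips of $\mathrm{YAT}(\alpha)$ and to identify the remaining partial sum, $\sum_{j\le l-1}$ or $\sum_{j\le l-2}$, with the writhe of the truncation $\alpha_{l-1}$ or $\alpha_{l-2}$. The split into the six cases $(i)$--$(vi)$ is precisely the split according to the ordered pair of $\frac{1}{1}/\frac{1}{0}/\frac{0}{1}$-types of $\alpha_{l-1}$ and $\alpha=\alpha_l$: these two types are distinct because $\alpha_{l-1}$ and $\alpha$ are Farey neighbors, so there are exactly the six possibilities listed.

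The geometric core is the behaviour of Seifert paths under truncation. For $k<l$ the ancestor triangle $\mathrm{YAT}(\alpha_k)$ is the subtriangle of $\mathrm{YAT}(\alpha)$ cut off along the edge $E_k$, and since the defining condition on a Seifert path --- that every edge joins vertices of types $\{\frac{1}{1},\frac{1}{0}\}$ or $\{\frac{1}{0},\frac{0}{1}\}$ --- is local, any initial portion of a Seifert path of $\alpha$ running from $\frac{1}{0}$ to a vertex of $\mathrm{YAT}(\alpha_k)$ is itself a Seifert path of $\alpha_k$. The fundamental triangle at the apex of $\mathrm{YAT}(\alpha)$ has as its vertices $\alpha_{l-1}$, the other parent $\beta_l$ and $\alpha$, carrying the three distinct types, so exactly one of its three edges has the forbidden type $\{\frac{1}{1},\frac{0}{1}\}$. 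When the edge between $\alpha_{l-1}$ and $\alpha$ is admissible, the relevant Seifert path of $\alpha$ passes through $\alpha_{l-1}$, and its restriction to $\mathrm{YAT}(\alpha_{l-1})$ is $\gamma_{\alpha_{l-1}}$ or $\gamma_{\alpha_{l-1}}'$ according as that edge has type $\{\frac{1}{0},\frac{0}{1}\}$ or $\{\frac{1}{1},\frac{1}{0}\}$; this happens in cases $(i)$, $(iii)$ and in one of the two subcases of $(v)$, $(vi)$. Otherwise the path passes through $\beta_l$, and its restriction to $\mathrm{YAT}(\alpha_{l-2})$ is a Seifert path of $\alpha_{l-2}$, the choice between $\gamma$ and $\gamma'$ being recorded by $\epsilon(c_l)$ or $\epsilon(c_l-1)$ in the statement; this happens in cases $(ii)$, $(iv)$ and in the remaining subcase of $(v)$, $(vi)$. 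In either situation the crossing counts are inherited: $z_j(\alpha)$ equals the corresponding crossing count of the truncation for all $j$ up to the truncation index, because the two paths coincide there and both begin at $\frac{1}{0}$; moreover along that initial portion the path meets no edge $F_j$ except possibly at the truncation edge, so each $t_{\alpha}([E_{j-1},E_j])$ is computed by the same clause of Proposition~\ref{2-9} as for the truncation. Hence $\sum_{j\le l-1}t_{\alpha}([E_{j-1},E_j])$ equals $\mathrm{wr}(\alpha_{l-1})$ or $\mathrm{wr}_{+-}(\alpha_{l-1})$ in the first family, and $\sum_{j\le l-2}t_{\alpha}([E_{j-1},E_j])$ equals $\mathrm{wr}(\alpha_{l-2})$ or $\mathrm{wr}_{+-}(\alpha_{l-2})$ in the second.

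What remains is to evaluate the leftover strip-contributions: $t_{\alpha}([E_{l-1},E_l])$ alone in cases $(i)$, $(iii)$, and $t_{\alpha}([E_{l-2},E_{l-1}])+t_{\alpha}([E_{l-1},E_l])$ in cases $(ii)$, $(iv)$, together with the corresponding $t_{\alpha}'$-contributions when $\alpha$ is $\frac{1}{0}$-type. In each configuration, the parities of $z_{l-1}(\alpha)$ and $z_{l-2}(\alpha)$ relative to those of the truncation, and the question of whether the Seifert path passes through $F_{l-1}$ or $F_l$, are forced by the types of $\alpha_{l-1}$, $\alpha$ and by the parities of $c_{l-1}$, $c_l$; this is where the indicators $\tfrac{1+(-1)^{c_l}}{2}$, $\tfrac{1-(-1)^{c_l}}{2}$ and $\tfrac{1+(-1)^{c_{l-1}}}{2}$ enter. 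Substituting these data into the recursive definitions of $t_{\alpha}([E_{j-1},E_j])$ and $t_{\alpha}'([E_{j-1},E_j])$ in Proposition~\ref{2-9} and simplifying turns the leftover contributions into the displayed correction terms $\pm(c_l-1)$, $\pm(-c_{l-1}+c_l+2)$ and $\pm(c_{l-1}+c_l-2)$ with the sign exponents written in the statement; when $\alpha$ is $\frac{1}{0}$-type one carries this out for $\gamma_{\alpha}$ and for $\gamma_{\alpha}'$ in parallel, which couples $\mathrm{wr}(\alpha)$ and $\mathrm{wr}_{+-}(\alpha)$ to the writhes of the truncations through \eqref{eq2-6} and \eqref{eq2-7} simultaneously.

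The main obstacle is the bookkeeping rather than any single deep step: for each of the six type-configurations, and for both orientations in $(v)$, $(vi)$, one must correctly identify which truncation $\alpha_{l-1}$ or $\alpha_{l-2}$ and which of its Seifert paths appears, verify in the ``skip'' cases that the restricted path really does reach $\alpha_{l-2}$ and not some earlier $\alpha_{k}$, determine precisely how the final segment of the path threads the strips $[E_{l-2},E_{l-1}]$ and $[E_{l-1},E_l]$ (that is, whether it meets $F_{l-1}$ or $F_l$), and keep every sign consistent through the resulting parity computations. One must also watch the degenerate situation $c_j=2$, where $E_{j-1}$ and $E_j$ share an endpoint, to be sure that it does not disturb the indexing of the strips or of the counts $z_j$.
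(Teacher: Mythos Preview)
Your proposal is correct and follows essentially the same approach as the paper: both arguments rest on Proposition~\ref{2-9}, handle $l=1,2$ by direct evaluation, and for $l\ge 3$ peel off the last one or two strips $[E_{l-1},E_l]$, $[E_{l-2},E_{l-1}]$, identifying the remaining partial sum with $\mathrm{wr}$ or $\mathrm{wr}_{+-}$ of the appropriate truncation via the restriction of the Seifert path. The paper carries out only case~(ii) in detail and declares the rest analogous, whereas you outline the common mechanism for all six cases; one small point to tighten is your claim that ``$z_j(\alpha)$ equals the corresponding crossing count of the truncation'': in the cases where the restricted path is $\gamma_{\alpha_{l-2}}'$ rather than $\gamma_{\alpha_{l-2}}$, the theorem still records the exponent in terms of $z_{l-2}(\alpha_{l-2})$ (defined via $\gamma_{\alpha_{l-2}}$), so a parity shift appears---the paper makes this explicit by computing $z_{l-2}(\alpha)=z_{l-2}(\alpha_{l-2})-\tfrac{1-(-1)^{c_l}}{2}$, and you should expect the same correction in your bookkeeping.
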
 
\begin{proof}
It can be easily verified in the case where $n=1, 2$. 
So, let $n\geq 3$. 
Here, we only give the proof of (ii) since other cases are shown by a quite similar argument. 
In the case of (ii), 
the types of $\beta_{l-1}$ and $\alpha_{l-2}$ are $\frac{1}{0}$-type or $\frac{1}{1}$-type, and 
they are different.  Thus, $\gamma_{\alpha}$ is expressed as 
$$\gamma_{\alpha}=\begin{cases}
\gamma _{\alpha _{l-2}}\ast (\alpha _{l-2}\to \beta _{l-1}\to \cdots \to \alpha ) & \text{if $\beta_{l-1}$ is $\frac{1}{0}$-type}, \\ 
\gamma _{\alpha _{l-2}}^{\prime}\ast (\alpha _{l-2}\to \beta _{l-1}\to \cdots \to \alpha )& \text{if $\beta_{l-1}$ is $\frac{1}{1}$-type}. 
\end{cases}$$ 

We see that 
$$z_l(\alpha )=z_{l-1}(\alpha )=\begin{cases}
z_{l-2}(\alpha )+1& \text{if $\beta_{l-1}$ is $\frac{1}{0}$-type and $\beta_{l-2}$ is $\frac{0}{1}$-type}, \\ 
z_{l-2}(\alpha )+2& \text{if $\beta_{l-1}$ is $\frac{1}{0}$-type and $\beta_{l-2}$ is $\frac{1}{0}$-type},\\ 
z_{l-2}(\alpha )+1& \text{if $\beta_{l-1}$ is $\frac{1}{1}$-type and $\beta_{l-2}$ is $\frac{0}{1}$-type},\\ 
z_{l-2}(\alpha )+2& \text{if $\beta_{l-1}$ is $\frac{1}{1}$-type and $\beta_{l-2}$ is $\frac{1}{1}$-type},
\end{cases}$$
and 
$$z_{l-2}(\alpha )=\begin{cases}
z_{l-2}(\alpha _{l-2}) & \text{if $\beta_{l-1}$ is $\frac{1}{0}$-type},\\ 
z_{l-2}(\alpha _{l-2})-1 & \text{if $\beta_{l-1}$ is $\frac{1}{1}$-type}. 
\end{cases}
$$ 
The number of fundamental triangles in the triangle whose vertices are $\alpha _{l-1}, \beta _{l-1}, \alpha $ is $c_l-1$. 
So, $\beta_{l-1}$ is $\frac{1}{1}$-type if and only if $c_l$ is odd. 
Therefore, $z_{l-2}(\alpha )$ can be written as 
$$z_{l-2}(\alpha )=z_{l-2}(\alpha _{l-2})-\dfrac{1-(-1)^{c_l}}{2}.$$
Similarly, considering the number of fundamental triangles in the region enclosed by $E_{l-2}, E_{l-1}$ and the both oblique sides, we see that $z_{l-1}(\alpha )$ can be written as 
$$z_{l-1}(\alpha )=z_{l-2}(\alpha )+1+\dfrac{1+(-1)^{c_{l-1}}}{2}.$$
Substituting the above two equations into 
\begin{align*}
\text{wr}(\alpha )
&=\text{wr}_{+\epsilon (c_l)}(\alpha _{l-2}) +t([E_{l-2}, E_{l-1}])+t([E_{l-1}, E_l]) \\ 
&=\text{wr}_{+\epsilon (c_l)}(\alpha _{l-2}) +(-1)^{z_{l-1}(\alpha )}(c_{l-1}-3)+(-1)^{z_l(\alpha )-1}(c_l-1), 
\end{align*}
we have the equation in (ii). 
\end{proof} 

\par \medskip 
\begin{rem}\label{2-12}
The type of the rational number $\alpha =[c_1, \ldots ,c _l]^-$ with $c_j\geq 2$ for all $j$ can be inductively determined  as follows: 
Let $n(\alpha )$ and $d(\alpha )$ be the parities of the numerator and the denominator of $\alpha$, respectively. 
We set $\alpha_j=[c_1, \ldots  , c_j]^-$ for $j=1, \ldots , l$. Then we have 
\begin{enumerate}
\item[$(1)$] $n(\alpha_1)=\frac{1-(-1)^{c_1}}{2},\ d(\alpha_1)=1$, 
\item[$(2)$] $n(\alpha_2)=\frac{1+(-1)^{c_1c_2}}{2},\ d(\alpha_2)=\frac{1-(-1)^{c_2}}{2}$, 
\item[$(3)$] for $j\geq 3$ 
\begin{align*}
n(\alpha _j)&=n(\alpha _{j-1})\frac{1-(-1)^{c_j}}{2}+n(\alpha _{j-2}),\\ 
d(\alpha _j)&=d(\alpha _{j-1})\frac{1-(-1)^{c_j}}{2}+d(\alpha _{j-2}). 
\end{align*}
\end{enumerate}
\end{rem}

\begin{exam}
(1) Let us consider the case $\alpha =\frac{13}{21}$. 
Since $\frac{21}{13}=[2,3,3,2]^-$ and 
$\alpha _3=\frac{1}{[2,3,3]^-}=\frac{8}{13},\ 
\alpha _2=\frac{1}{[2,3]^-}=\frac{3}{5},\ 
\alpha _1=\frac{1}{[2]^-}=\frac{1}{2}$, we will apply the formula of Theorem~\ref{2-11}(3)(ii). 
Then we have 
$\mathrm{wr}\bigl( \frac{13}{21}\bigr) =\mathrm{wr}\bigl( \frac{3}{5}\bigr) +(-1)^{z_2(\alpha_2)}$. 
Here, 
$z_2(\alpha_2)=1,\ \mathrm{wr}\bigl( \frac{3}{5}\bigr) =0$. Thus 
$\mathrm{wr}\bigl( \frac{13}{21}\bigr) =-1$. 
Since $l=4,\ l^{\prime}=3$ and 
$$J_{\frac{21}{13}}(q) =1+3q+3q^2+4q^3+4q^4+3q^5+2q^2+q^7,$$
by Theorem~\ref{1-3} we have 
$$V_{\frac{21}{13}}(t)=-t^3+3t^2-3t+4-4t^{-1}+3t^{-2}-2t^{-3}+t^{-4}.$$

\par 
(2) Let us consider the case $\alpha =\frac{9}{16}$. 
Since 
$\frac{16}{9}=[2,5,2]^-$ and 
$\alpha _2=\frac{1}{[2, 5]^-}=\frac{5}{9},\ 
\alpha _1=\frac{1}{[2]^-}=\dfrac{1}{2}$, 
we will apply the formula of Theorem~\ref{2-11}(3)(vi). 
Then we have 
$\mathrm{wr}\bigl( \frac{9}{16}\bigr) 
=\mathrm{wr}\bigl( \frac{1}{2}\bigr) -(-1)^{z_1(\alpha_1)}=3$. 
Since $l=3,\ l^{\prime}=4$ and 
$$J_{\frac{16}{9}}(q)=1+2q+2q^2+3q^3+3q^4+3q^5+q^6+q^7,$$
we have 
\begin{align*}
V_{\frac{16}{9}}(t)
&=(-A^{-3})^{-2}(t^4-2t^3+2t^2-3t+3-3t^{-1}+t^{-2}-t^{-3}) \\ 
&=t^{\frac{5}{2}}-2t^{\frac{3}{2}}+2t^{\frac{1}{2}}-3t^{-\frac{1}{2}}+3t^{-\frac{3}{2}}-3t^{-\frac{5}{2}}+t^{-\frac{7}{2}}-t^{-\frac{9}{2}}. 
\end{align*}
\end{exam} 

\par \medskip 
\section{$q$-deformed integers derived from pairs of coprime integers}
\par 
In this section we introduce $q$-deformed integers derived from pairs of coprime integers. 
This concept is motivated by results on $q$-deformed rational numbers due to Morier-Genoud and Ovsienko. 
\par 
For a positive integer $a$ we set 
\begin{equation}
[a]_q:=1+q+q^2+\cdots +q^{a-1}\in \mathbb{Z}[q], 
\end{equation}
and $[0]_q:=0$. In the quotient field $Q(\mathbb{Z}[q])$ the equation $[a]_q=\frac{1-q^a}{1-q}$ holds. 
\par 
Morier-Genoud and Ovsienko~\cite{M-GO2} introduced $q$-deformations of rational numbers by using continued fractions. 
Let $c_1, \ldots , c_l$ be finite sequence of integers which are greater than or equal to $2$. Then 
$[c_1, \ldots, c_l]_q^-\in Q(\mathbb{Z}[q])$ is defined by 
\begin{equation}\label{eq4}
\begin{aligned}
{[c_1]_q}-\dfrac{q^{c_1-1}}{\vbox to 18pt{ }[c_2]_q-\dfrac{q^{c_2-1}}{\raisebox{0.5cm}{$\ddots$} \vbox to 25pt{ }-\dfrac{q^{c_{l-2}-1}}{\vbox to 18pt{ }[c_{l-1}]_q-\dfrac{q^{c_{l-1}-1}}{[c_l]_q}}}}. 
\end{aligned}
\end{equation}
For an $\alpha \in(1, \infty )\cap \mathbb{Q}$, by expanding as $\alpha =[c_1, \ldots, c_l]^-$, we set 
\begin{equation}
[\alpha ]_q:=[c_1, \ldots, c_l]_q^-, 
\end{equation}
and call it the \textit{$q$-rational number} of $\alpha $. 
By reducing \eqref{eq4} without multiplying elements of $\mathbb{Z}[q]$, 
$[\alpha ]_q$ is uniquely represented by the form 
\begin{equation}
[\alpha ]_q=\dfrac{N_q(\alpha )}{D_q(\alpha )}
\end{equation}
for some $N_q(\alpha ), D_q(\alpha )\in \mathbb{Z}[q]$, where 
\begin{enumerate}
\item[(i)] $N_q(\alpha ), D_q(\alpha )$ are coprime in $\mathbb{Z}[q]$, and 
\item[(ii)] if $\alpha =\frac{r}{s}$ with $(r,s)=1,\ r,s \geq 1$, then $N_1(\alpha )=r,\  D_1(\alpha )=s$. 
\end{enumerate}

For convenience we set  
$N_q(\frac{1}{0})=1,\ D_q(\frac{1}{0})=0$. 

\begin{thm}[{\bf Morier-Genoud and Ovsienko~\cite{M-GO2}}]\label{3-1}
Let $\alpha , \beta \geq 1$ be Farey neighbors, and express the Farey sum as $\alpha \sharp \beta =[c_1, \ldots , c_l]^-$. Then 
\begin{align*}
N_q(\alpha \sharp \beta )&=N_q(\alpha )+q^{c_l-1}N_q(\beta ), \\ 
D_q(\alpha \sharp \beta )&=D_q(\alpha )+q^{c_l-1}D_q(\beta ). 
\end{align*}
So, we define $[\alpha ]_q\oplus  [\beta ]_q$ by 
\begin{equation}\label{eq5}
[\alpha ]_q\oplus  [\beta ]_q := \dfrac{N_q(\alpha )+q^{c_l-1}N_q(\beta )}{D_q(\alpha )+q^{c_l-1}D_q(\beta )}. 
\end{equation}
Then 
\begin{equation}
[\alpha ]_q\oplus [\beta ]_q =[\alpha \sharp \beta ]_q. 
\end{equation}
\eqref{eq5} is said to be the weighted Farey sum of $[\alpha ]_q$ and $ [\beta ]_q$. 
\end{thm}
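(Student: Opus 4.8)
The plan is to reduce the whole statement to one matrix identity for $q$-continued fractions. For an integer $c\geq 1$ set $M_q(c):=\begin{pmatrix}[c]_q & -q^{c-1}\\ 1 & 0\end{pmatrix}$ and, for a finite sequence, $M_q(c_1,\dots,c_l):=M_q(c_1)\cdots M_q(c_l)$. The first step is to show that, for $\gamma=[c_1,\dots,c_l]^-$, the first column of $M_q(c_1,\dots,c_l)$ is $\begin{pmatrix}N_q(\gamma)\\ D_q(\gamma)\end{pmatrix}$. I would prove this by induction on $l$, unfolding the continued fraction from the front: since the first column of a product $AB$ is $A$ applied to the first column of $B$, the inductive step is exactly the one-step recursion $[c_1,\dots,c_l]_q^-=[c_1]_q-q^{c_1-1}/[c_2,\dots,c_l]_q^-$ behind \eqref{eq4}, realized as $M_q(c_1)\begin{pmatrix}N_q(\delta)\\ D_q(\delta)\end{pmatrix}=\begin{pmatrix}[c_1]_qN_q(\delta)-q^{c_1-1}D_q(\delta)\\ N_q(\delta)\end{pmatrix}$ with $\delta=[c_2,\dots,c_l]^-$. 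What still has to be checked is that the resulting pair is the \emph{reduced} one, i.e.\ conditions (i) and (ii): from $\det M_q(c)=q^{c-1}$ a common factor of $N_q(\gamma)$ and $D_q(\gamma)$ can only be a power of $q$, the substitution $q=1$ recovers the classical matrix identity for negative continued fractions and hence (ii), and observing that $N_q(\gamma)$ has nonzero constant term gives (i). This last verification is carried out in \cite{M-GO2}, so it may be cited rather than repeated.

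The second step is to read off the parents of $\gamma=[c_1,\dots,c_l]^-$ (all $c_j\geq 2$, since $\alpha\sharp\beta>1$). Let $P,Q$ denote the two columns of $M_q(c_1,\dots,c_{l-1})$; by Step~1, $P=\begin{pmatrix}N_q(\beta)\\ D_q(\beta)\end{pmatrix}$ with $\beta:=[c_1,\dots,c_{l-1}]^-$ (taking $\beta=\frac{1}{0}$ when $l=1$, which is consistent with the convention $N_q(\frac10)=1$, $D_q(\frac10)=0$). The first columns of $M_q(c_l)$ and of $M_q(c_l-1)$ are $\begin{pmatrix}[c_l]_q\\ 1\end{pmatrix}$ and $\begin{pmatrix}[c_l-1]_q\\ 1\end{pmatrix}$, so Step~1 applied to $M_q(c_1,\dots,c_{l-1})M_q(c_l)$ and $M_q(c_1,\dots,c_{l-1})M_q(c_l-1)$ gives
$$\begin{pmatrix}N_q(\gamma)\\ D_q(\gamma)\end{pmatrix}=[c_l]_q\,P+Q,\qquad \begin{pmatrix}N_q(\alpha)\\ D_q(\alpha)\end{pmatrix}=[c_l-1]_q\,P+Q,$$
where $\alpha:=[c_1,\dots,c_l-1]^-$; when $c_l=2$ the expansion of $\alpha$ ends in a $1$, and one uses the identity $[\dots,c,1]^-=[\dots,c-1]^-$ together with its $q$-analogue to see that Step~1 still applies to this second product. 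Setting $q=1$ and subtracting shows that the numerator and denominator of $\gamma$ are the sums of those of $\alpha$ and $\beta$, while $\det M_1(c_1,\dots,c_l)=1$ shows $\alpha,\beta$ are Farey neighbors with $\alpha<\beta$; hence $\gamma=\alpha\sharp\beta$ and, by the uniqueness of parents (Lemma~\ref{2-1}(2)), $(\alpha,\beta)$ is exactly the pair of parents of $\gamma$. In particular the Farey neighbors in the statement must be $\alpha=[c_1,\dots,c_l-1]^-$ and $\beta=[c_1,\dots,c_{l-1}]^-$. (Equivalently, this identification is the classical dictionary between negative continued fractions and downward paths in the Stern--Brocot tree of Section~2.)

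Finally, subtracting the two displayed identities and using $[c_l]_q-[c_l-1]_q=q^{c_l-1}$ yields
$$\begin{pmatrix}N_q(\alpha\sharp\beta)\\ D_q(\alpha\sharp\beta)\end{pmatrix}=\begin{pmatrix}N_q(\alpha)\\ D_q(\alpha)\end{pmatrix}+q^{c_l-1}\begin{pmatrix}N_q(\beta)\\ D_q(\beta)\end{pmatrix},$$
which is the pair of formulas asserted; the concluding claim $[\alpha]_q\oplus[\beta]_q=[\alpha\sharp\beta]_q$ is then immediate on reading \eqref{eq5} together with these formulas and the definition $[\alpha\sharp\beta]_q=N_q(\alpha\sharp\beta)/D_q(\alpha\sharp\beta)$. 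I expect the main obstacle to be bookkeeping rather than ideas: in Step~1, ensuring that the first column of the matrix product is genuinely the reduced pair $(N_q,D_q)$ and not a $q$-power multiple of it, and in Step~2, the degenerate case $c_l=2$ where $\alpha$ acquires a trailing $1$ and must be rewritten. Everything else rests on the Stern--Brocot/continued-fraction correspondence already set up in Section~2 and the single identity $[c]_q-[c-1]_q=q^{c-1}$.
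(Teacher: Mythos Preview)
The paper does not prove Theorem~\ref{3-1}; it is quoted as a result of Morier-Genoud and Ovsienko~\cite{M-GO2} and used as a black box thereafter, so there is no in-paper argument to compare your proposal against. Your matrix approach via $M_q(c)=\begin{pmatrix}[c]_q & -q^{c-1}\\ 1 & 0\end{pmatrix}$ is exactly the one used in~\cite{M-GO2} to set up $q$-continued fractions, and the three ingredients you isolate---that the first column of $M_q(c_1,\dots,c_l)$ is the reduced pair $(N_q,D_q)$, that the parents of $[c_1,\dots,c_l]^-$ are $[c_1,\dots,c_l-1]^-$ and $[c_1,\dots,c_{l-1}]^-$, and the identity $[c_l]_q-[c_l-1]_q=q^{c_l-1}$---combine correctly to give the stated formulas. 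The two loose ends you anticipate are genuine but minor: reducedness follows, as you say, from $\det M_q(c)=q^{c-1}$ together with the nonzero constant term, and the $c_l=2$ degeneration is handled by the first-column identity $M_q(c)M_q(1)\,e_1=M_q(c-1)\,e_1$, which is the $q$-analogue of $[\dots,c,1]^-=[\dots,c-1]^-$. One small point worth making explicit in your Step~2 is the sign: with $P,Q$ the columns of $M_q(c_1,\dots,c_{l-1})$ one has $\det(P,Q)=q^{\sum(c_j-1)}$, hence at $q=1$ the matrix with columns $(N(\alpha),D(\alpha))^T=(c_l-1)P+Q$ and $(N(\beta),D(\beta))^T=P$ has determinant $-1$, which is precisely $D(\alpha)N(\beta)-D(\beta)N(\alpha)=1$, confirming that $(\alpha,\beta)$ are Farey neighbors in the order required by the paper's convention.
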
 

\begin{rem}\label{3-2}
The integer $c_l$ in the above theorem is given by 
\begin{equation}\label{eq3-7}
c_l-1=\left\lceil \frac{N(\alpha )}{N(\beta )} \right\rceil , 
\end{equation}
where $N(\alpha ), N(\beta )$ are the numerators of $\alpha , \beta $, respectively. 
In fact, if we write $\alpha =\frac{p}{q},\ \beta =\frac{r}{s}$, then $\texttt{r}(\alpha \sharp \beta )=\frac{p+r}{r}=\frac{p}{r}+1$, where 
$\texttt{r}$ is the operator defined in \cite[Lemma 1.3]{Kogiso-Wakui_OJM}. 
On the other hand, it also given by $\texttt{r}(\alpha \sharp \beta )=[c_l, \ldots ,c_1]^-=c_l-\frac{1}{[c_{l-1}, \ldots , c_1]^-}$. 
Thus, $\frac{p}{r}+1=c_l-\frac{1}{[c_{l-1}, \ldots , c_1]^-}$. 
By comparing the integer parts of both sides of this equation, \eqref{eq3-7} is obtained. 
\end{rem} 

Let us introduce a convenient method to compute $[\alpha \sharp \beta ]_q$ by Euclidean algorithm. 
For a pair $(a, b)$ of positive and coprime integers we define $(a, b)_q$ by 
\begin{equation}\label{eq3-8}
(a, b)_q
:=\begin{cases}
(a-r, r)_q+q(a, b-a)_q & \text{if $a<b$},\\ 
(a-b, b)_q+q^{\lceil \frac{a}{b}\rceil }(r, b-r)_q & \text{if $a>b$}, 
\end{cases}
\end{equation}
where $r$ is the remainder when $b$ is divided by $a$ in case where $a<b$, and 
when $a$ is divided by $b$ in case where $a>b$, 
and also $(1, n)_q=(n, 1)_q=[1+n]_q$ for any non-negative integer $n$. 

\begin{thm}\label{3-3}
If $\alpha =\frac{x}{a},\ \beta =\frac{y}{b}\geq 1$ are Farey neighbors, then 
\begin{equation}\label{eq3-9}
D_q(\alpha \sharp \beta )=(a, b)_q,\qquad N_q(\alpha \sharp \beta )=(x, y)_q. 
\end{equation} 
Thus 
\begin{equation}\label{eq3-10}
[\alpha \sharp \beta ]_q=\dfrac{(x,y)_q}{(a, b)_q}. 
\end{equation}
\end{thm}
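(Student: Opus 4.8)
The plan is to prove the two identities of \eqref{eq3-9} simultaneously, by strong induction on $a+b$; equation \eqref{eq3-10} then follows immediately from $[\alpha\sharp\beta]_q=N_q(\alpha\sharp\beta)/D_q(\alpha\sharp\beta)$. Since the pair of numerators $(x,y)$ and the pair of denominators $(a,b)$ are put through exactly the same recursion \eqref{eq3-8}, it suffices to carry out the argument for $D_q$ and then read off the statement for $N_q$ by replacing $(a,b)$ throughout by $(x,y)$ and using the numerators of the relevant fractions in place of their denominators. Note that the hypothesis $ay-bx=1$ forces $\frac{x}{a}<\frac{y}{b}$ and $\gcd(x,y)=1$.

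For the base case I take $\min\{a,b\}=1$, together with the boundary values $N_q(\tfrac10)=1$, $D_q(\tfrac10)=0$ and $(1,n)_q=(n,1)_q=[1+n]_q$ supplied by the conventions. If $a=1$, then $\alpha$ is an integer and $\beta=\frac{bx+1}{b}$; applying Theorem~\ref{3-1} repeatedly to peel $\beta$ down to the integer $x+1$ gives the closed forms $D_q(\alpha\sharp\beta)=[b+1]_q$ and $N_q(\alpha\sharp\beta)=[x]_q[b]_q+q^{b}[x+1]_q$, and these coincide with $(1,b)_q=[b+1]_q$ and with $(x,bx+1)_q$ (the latter on unfolding \eqref{eq3-8}). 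The case $b=1$ is entirely similar.

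Now let $a,b\ge 2$, so also $x,y\ge 2$. Applying Theorem~\ref{3-1} and Remark~\ref{3-2} to $\frac{x+y}{a+b}=\frac{x}{a}\sharp\frac{y}{b}$ gives
$$D_q\Bigl(\tfrac{x+y}{a+b}\Bigr)=D_q\Bigl(\tfrac{x}{a}\Bigr)+q^{\lceil x/y\rceil}D_q\Bigl(\tfrac{y}{b}\Bigr),$$
so the task is to compute $D_q(\tfrac xa)$ and $D_q(\tfrac yb)$ from the induction hypothesis by exhibiting the parents (Lemma~\ref{2-1}) of $\tfrac xa$ and of $\tfrac yb$. Suppose first $a<b$, and write $b=ka+r$ with $r=b\bmod a$, so $1\le r\le a-1$; then $\frac{x}{a}<\frac{y}{b}$ with $a<b$ gives $x<y$, hence $\lceil x/y\rceil=1$. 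The parents of $\tfrac yb$ turn out to be $\tfrac xa$ and $\tfrac{y-x}{b-a}$ (denominators $a$ and $b-a$), and the parents of $\tfrac xa$ are $\tfrac{(k+1)x-y}{a-r}$ and $\tfrac{y-kx}{r}$ (denominators $a-r$ and $r$); all four of these are $\ge 1$, the only nonobvious case being $\tfrac{(k+1)x-y}{a-r}\ge 1$, which reduces to $(a-r)(x-a)\ge 1$ via the identity $y-b=\tfrac{1+b(x-a)}{a}$. Because $(a-r)+r=a<a+b$ and $a+(b-a)=b<a+b$, the induction hypothesis yields $D_q(\tfrac xa)=(a-r,r)_q$ and $D_q(\tfrac yb)=(a,b-a)_q$; substituting and comparing with \eqref{eq3-8} gives $D_q(\tfrac{x+y}{a+b})=(a,b)_q$. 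If instead $a>b$, write $a=k'b+r$ with $r=a\bmod b$; then $x>y$, the parents of $\tfrac xa$ are $\tfrac{x-y}{a-b}$ and $\tfrac yb$ (denominators $a-b$, $b$), and the parents of $\tfrac yb$ are $\tfrac{x-k'y}{r}$ and $\tfrac{(k'+1)y-x}{b-r}$ (denominators $r$, $b-r$), all $\ge 1$ by estimates of the same kind (notably $(a-b)(x-a)\ge 1$ and $r(y-b)\ge 1$); moreover $0<x-k'y<y$ gives $\lfloor x/y\rfloor=k'$, hence $\lceil x/y\rceil=k'+1=\lceil a/b\rceil$, so that after applying the induction hypothesis one obtains $D_q(\tfrac{x+y}{a+b})=(a-b,b)_q+q^{\lceil a/b\rceil}(r,b-r)_q=(a,b)_q$. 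The identity for $N_q$ follows with the identical parents and exponents; for example when $a<b$ one has $y-kx=y\bmod x$ and $(k+1)x-y=x-(y\bmod x)$, matching the shape of \eqref{eq3-8} for $(x,y)_q$ exactly.

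I expect the main obstacle to be the bookkeeping of the inductive step: correctly identifying each parent pair and, since \eqref{eq3-8} is not symmetric in its two arguments, which member of the pair is the smaller one, and then checking that every fraction that occurs as a parent is $\ge 1$ so that the induction hypothesis is applicable — this is precisely where the arithmetic of the Farey relation $ay-bx=1$ enters. A minor additional point is to reconcile the exponent $q^{c_l-1}=q^{\lceil x/y\rceil}$ coming from Remark~\ref{3-2} with the exponent $q^{\lceil a/b\rceil}$ appearing in \eqref{eq3-8} when $a>b$, which amounts to the equality $\lfloor x/y\rfloor=\lfloor a/b\rfloor$.
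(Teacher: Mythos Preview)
Your proof is correct and follows essentially the same approach as the paper's: induct (you on $a+b$, the paper on the number of $\sharp$ operations), exhibit the parents of $\alpha$ and of $\beta$, apply the induction hypothesis together with Theorem~\ref{3-1}/Remark~\ref{3-2}, and match the result against the defining recursion~\eqref{eq3-8}. Your version is a bit more explicit in verifying that all parent fractions are $\geq 1$ and in reconciling $\lceil x/y\rceil$ with $\lceil a/b\rceil$, and you take a larger base case ($\min\{a,b\}=1$ instead of $a+b\le 2$), but the substance of the argument is the same.
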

\begin{proof}
The theorem can be verified by induction on the number of times of the operation $\sharp $. 
\par 
I. If $a+b=1$, then $\alpha =\frac{a}{1},\ \beta =\frac{1}{0}$, and \eqref{eq3-10} can be easily verified. 
If $a+b=2$, then $\alpha =\frac{a}{1},\ \beta =\frac{a+1}{1}$. 
Since $\alpha \sharp \beta =[a+1, 2]^-$, we have 
$[\alpha \sharp \beta ]_q=\frac{[a+1]_q[2]_q-q^a}{[2]_q}=\frac{[a]_q+q[a+1]_q}{[2]_q}=\frac{(a, a+1)_q}{(1,1)_q}$. 

\par 
I\kern-0.1em I. Let $\alpha , \beta \geq 1$ be Farey neighbors.  
Under the assumption $a+b\geq 3$, it is enough to show that 
if $\alpha , \beta $ satisfy \eqref{eq3-9}, then $\alpha \sharp \beta $ so does. 
Under this assumption we note that $a<b$ if and only if $x<y$. 
\par 
Let us consider the case where $a<b$, and therefore $x<y$. 
Let $r$ be the remainder when $b$ is divided by $a$, and 
$s$ be the remainder when $y$ is divided by $x$. 
Then, the pair 
$\frac{x}{a}, \frac{y-x}{b-a}$ and the pair 
 $\frac{x-s}{a-r}, \frac{s}{r}$ are Farey neighbors, respectively, and 
$\frac{y}{b}=\frac{x}{a}\sharp\frac{y-x}{b-a}, \ 
\frac{x}{a}=\frac{x-s}{a-r}\sharp\frac{s}{r}$. 
By induction hypothesis, 
$$D_q(\alpha )=(a-r, r)_q,\ N_q(\alpha )=(x-s, s)_q,\ 
D_q(\beta )=(a, b-a)_q,\ N_q(\beta )=(x, y-x)_q.$$
Since $\lceil \frac{x}{y} \rceil =1$, by Theorem~\ref{3-1} and \eqref{eq3-8} we have 
\begin{align*}
D_q(\alpha \sharp \beta )
&=(a-r, r)_q+q(a, b-a)_q=(a, b)_q,\\ 
N_q(\alpha \sharp \beta )
&=(x-s, s)_q+q(x, y-x)_q=(x, y)_q. 
\end{align*}

Next, let us consider the case where $a>b$, and therefore $x>y$. 
Let $r$ be the remainder when $a$ is divided by $b$, and 
$s$ be the remainder when $x$ is divided by $y$. 
Then, the pair 
$\frac{x-y}{a-b}, \frac{y}{b}$ and the pair $\frac{s}{r}, \frac{y-s}{b-r}$ are Farey neighbors, respectively, and 
$\frac{x}{a}=\frac{x-y}{a-b}\sharp\frac{y}{b}, \ 
\frac{y}{b}=\frac{s}{r}\sharp\frac{y-s}{b-r}$. 
By the same argument above, we see that 
$D_q(\alpha \sharp \beta )=(a, b)_q,\  
N_q(\alpha \sharp \beta )=(x, y)_q$. 
Moreover, 
since $x$ can be written as $x=my+s$ for some $0<s<y$ and $1\leq a-mb\leq b$, 
it follows that $\lceil \frac{x}{y} \rceil =m+1=\lceil \frac{a}{b} \rceil $. 
This implies that \eqref{eq3-9} holds for $\alpha \sharp \beta$. 
\end{proof} 

We use the following convention: For a positive integer $n$ 
\begin{equation}
(n)_q:=[n]_q\ (=1+q+\cdots +q^{n-1}),\quad (0)_q:=0.
\end{equation}

\begin{exam}
By the formula \eqref{eq3-10}, 
$\bigl[ \frac{17}{5}\bigr]_q=\frac{(10,7)_q}{(3,2)_q}=\frac{(3,7)_q+q^2(3,4)_q}{(3)_q+q^2(2)_q}$. 
Since $(3,4)_q=(3)_q+q(4)_q,\ 
(3,7)_q=(3)_q+q(3,4)_q$, we have  
$$
\Bigl[ \dfrac{17}{5}\Bigr]_q
=\dfrac{(1+q+q^2)(3)_q+(1+q)q^2(4)_q}{(3)_q+q^2(2)_q} 
=\dfrac{(3)_q^2+q^2(2)_q(4)_q}{(3)_q+q^2(2)_q}. $$
\end{exam} 

In the sequel, we will investigate properties of $(a, b)_q$. 

\begin{lem}\label{3-5}
Let $(a, x)$ be a pair of coprime integers with $1\leq a\leq x$, and write in the form 
$x=ca+r\ (0\leq r<a)$. Then 
\begin{align}
(a,x)_q&=(a-r, r)_q(c)_q+q^c(a, r)_q, \label{eq3-12}\\  
\mathrm{deg}(x-a, a)_q&=\mathrm{deg}(a, x)_q-1, \label{eq3-13} \\ 
\mathrm{deg}(a-r, r)_q&=\mathrm{deg}(a, x)_q-\left\lceil \dfrac{x}{a}\right\rceil , \label{eq3-14}\\
\mathrm{deg}(a, x)_q&=\mathrm{deg}(x, a)_q, \label{eq3-15}\\ 
(x, a)_q&=q^{\mathrm{deg}(x, a)_q}(a, x)_{q^{-1}}. \label{eq3-16}
\end{align}
\end{lem}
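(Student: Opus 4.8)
The plan is to establish \eqref{eq3-12} directly from the defining recursion \eqref{eq3-8}, and then to derive the four degree and reciprocity identities \eqref{eq3-13}--\eqref{eq3-16} by a single induction on $a+x$ in which \eqref{eq3-12} is the main computational tool. For \eqref{eq3-12} itself, write $x=ca+r$ with $0\le r<a$; since $x\ge a\ge 1$ we have $c\ge 1$. If $a=1$ then $r=0$, $c=x$, and \eqref{eq3-12} reads $(x)_q+q^{x}=(x+1)_q$, which is clear. If $a\ge 2$ then $r\ge 1$ by coprimality, and $x-ja=(c-j)a+r>a$ and $x-ja\equiv r\pmod a$ for $0\le j\le c-1$; hence the first line of \eqref{eq3-8} applies successively at the pairs $(a,x),(a,x-a),\dots,(a,x-(c-1)a)$, each application peeling off a term $(a-r,r)_q$ carrying one additional factor $q$, and after $c$ steps one reaches $q^{c}(a,r)_q$; summing the resulting geometric series gives \eqref{eq3-12}. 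I would also record at this point the elementary fact, proved by a one-line induction on \eqref{eq3-8}, that every $(a,b)_q$ with $a,b\ge 1$ has non-negative integer coefficients, constant term $1$, and hence leading coefficient $1$; this guarantees that no cancellation of top terms occurs when $(a,b)_q$ is split via \eqref{eq3-8} or \eqref{eq3-12}, so that the degree of a sum of such pieces equals the maximum of their degrees.

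I would then run the induction on $a+x$, taking the pairs with $\min\{a,x\}=1$ --- for which $(a,x)_q=(x,1)_q=(1,x)_q=(1+x)_q$ is given outright --- as base cases, where \eqref{eq3-13}--\eqref{eq3-16} are checked directly. For the inductive step assume $2\le a\le x$; coprimality then forces $a<x$ and $r\ge 1$, so $\lceil x/a\rceil=c+1$. From \eqref{eq3-12} and the no-cancellation remark, $\deg(a,x)_q=\max\{\deg(a-r,r)_q+c-1,\ \deg(a,r)_q+c\}$; applying the inductive \eqref{eq3-15} and \eqref{eq3-13} to the strictly smaller pair $(r,a)$ yields $\deg(a,r)_q=\deg(r,a)_q=\deg(a-r,r)_q+1$, which collapses the maximum to $\deg(a,x)_q=\deg(a-r,r)_q+c+1$; this is \eqref{eq3-14}. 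Comparing with $\deg(x-a,a)_q$ now gives \eqref{eq3-13}: when $c\ge 2$ one has $a\le x-a$, so the inductive \eqref{eq3-15} and \eqref{eq3-12} for $(a,x-a)$ give $\deg(x-a,a)_q=\deg(a,x-a)_q=\deg(a,x)_q-1$; when $c=1$ one has $x-a=r<a$, so $\deg(x-a,a)_q=\deg(r,a)_q=\deg(a,r)_q=\deg(a-r,r)_q+1=\deg(a,x)_q-1$.

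With \eqref{eq3-13} and \eqref{eq3-14} in hand for $(a,x)$ itself, \eqref{eq3-15} follows from $(x,a)_q=(x-a,a)_q+q^{c+1}(r,a-r)_q$ (second line of \eqref{eq3-8}) by reading off the degrees of the two summands via \eqref{eq3-13} and the inductive \eqref{eq3-15} for $(r,a-r)$. Finally, for \eqref{eq3-16} I would substitute into the right-hand side $q^{\deg(x,a)_q}(a,x)_{q^{-1}}$ the expansion \eqref{eq3-12}, the identity $(c)_{q^{-1}}=q^{-(c-1)}(c)_q$, and the inductive instances of \eqref{eq3-15} and \eqref{eq3-16} for $(a-r,r)$ and $(a,r)$ (which turn $(a-r,r)_{q^{-1}}$ and $(a,r)_{q^{-1}}$ into shifted copies of $(r,a-r)_q$ and $(r,a)_q$), using the sharpened \eqref{eq3-14} in the form $\deg(a,x)_q=\deg(a-r,r)_q+c+1=\deg(a,r)_q+c$ to evaluate all exponents; everything simplifies to $q^{2}(c)_q(r,a-r)_q+(r,a)_q$. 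Expanding $(x-a,a)_q$ the same way (inductive \eqref{eq3-16} together with \eqref{eq3-12} for $(a,x-a)$) and using $q^{2}(c)_q-q^{c+1}=q^{2}(c-1)_q$ gives $(x-a,a)_q=(r,a)_q+q^{2}(c-1)_q(r,a-r)_q$, so that $(x,a)_q=(x-a,a)_q+q^{c+1}(r,a-r)_q=q^{2}(c)_q(r,a-r)_q+(r,a)_q$ as well; the two sides therefore agree, and the case $c=1$ is the same computation with $(1)_q=1$, $(0)_q=0$.

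The subtleties are essentially all organizational. Within the inductive step the identities must be proved in an order that respects their dependence --- \eqref{eq3-14} and \eqref{eq3-13} first (each using only \eqref{eq3-12} and strictly smaller pairs), then \eqref{eq3-15} (using the now-established \eqref{eq3-13} for the current pair), then \eqref{eq3-16} --- and whenever the inductive hypothesis for \eqref{eq3-15} or \eqref{eq3-16} is invoked for a pair $(p,s)$ with $p>s$, one uses the symmetry of those two identities under exchange of arguments to read them off from the strictly smaller pair $(s,p)$. I expect the only step requiring real care to be the exponent bookkeeping in \eqref{eq3-16}; everything else is a routine Euclidean-algorithm induction.
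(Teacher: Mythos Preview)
Your proof is correct and takes essentially the same approach as the paper: \eqref{eq3-12} is obtained directly from the recursion, and the remaining identities are proved by induction on $a+x$ with \eqref{eq3-12} as the main tool. The differences are purely organizational --- you run one unified induction in the order \eqref{eq3-14}$\to$\eqref{eq3-13}$\to$\eqref{eq3-15}$\to$\eqref{eq3-16} and handle \eqref{eq3-16} via the full expansion \eqref{eq3-12}, whereas the paper proves \eqref{eq3-13} and \eqref{eq3-15} simultaneously, deduces \eqref{eq3-14} afterward, and runs a second induction for \eqref{eq3-16} via the one-step recursion --- and your explicit non-negativity/no-cancellation remark makes rigorous a point the paper leaves tacit.
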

\begin{proof}
The equation \eqref{eq3-12} can be directly derived from the definition of $(a,x)_q$. 
\par 
We will show the equations \eqref{eq3-13} and \eqref{eq3-15}. 
To do this, let $k\geq 2$ be an integer, and 
by induction on $k$ we show that 
\begin{equation*}
\mathrm{deg}(x, a)_q=\mathrm{deg}(a, x)_q=\mathrm{deg}(x-a, a)_q+1 \tag*{$(\ast )$}
\end{equation*}
for all pairs $(a, x)$ of coprime integers satisfying $1\leq a\leq x$ and $a+x=k$. 
\par 
When $k=2$, we have $a=x=1$, and when $k=3$, we have $a=1,\ x=2$. 
In the both cases thus the equation $(\ast )$ holds. 
\par 
Assume that $k>2$ and the equation $(\ast )$ holds for all pairs $(a^{\prime}, x^{\prime})$ of coprime integers satisfying $1\leq a^{\prime}\leq x^{\prime},\ a^{\prime}+x^{\prime}<k$. 
Let $(a, x)$ be a pair of coprime integers satisfying $1\leq a\leq x,\ a+x=k$. 
We express $x$ as the form $x=ca+r\ (0\leq r<a)$. 
Since $\text{deg}(a, r)_q=\text{deg}(r, a)_q=\text{deg}(a-r, r)_q+1$
by induction hypothesis, 
we have 
$\text{deg}\bigl( (a-r, r)_q(c)_q\bigr) 
=c+d-2,\ 
\text{deg}\bigl( q^c(a, r)_q \bigr) 
=c+d
$ by setting $d:=\text{deg}(a, r)_q$. 
Thus 
$\text{deg}(a, x)_q=c+d=\text{deg}(a-r, r)_q+1+c$, 
and in particular, 
$\text{deg}(a, x)_q=\text{deg}(a, x-a)_q+1$
by combining it with 
$\text{deg}(a, x)_q>\text{deg}(a-r, r)_q$ and $(a, x)_q=(a-r, r)_q+q(a, x-a)_q$. 
Since 
$(x, a)_q=(x-a, a)_q+q^{\lceil \frac{x}{a}\rceil } (r, a-r)_q$, 
we have 
\begin{align*}
\text{deg}(x-a, a)_q
=\text{deg}(a, x-a)_q
&=\text{deg}(a, x)_q-1\\ 
&=\text{deg}(a-r, r)_q+c 
=\text{deg}(r, a-r)_q+c. 
\end{align*}

\par 
If $a>1$, then $\lceil \frac{x}{a}\rceil =c+1$. It follows that 
$$\text{deg}(x-a, a)_q<\text{deg}(r, a-r)_q+c+1=\text{deg}(r, a-r)_q+\left\lceil \frac{x}{a}\right\rceil .$$
Therefore, we obtain
$\text{deg}(x, a)_q
=\left\lceil \frac{x}{a}\right\rceil +\text{deg}(r, a-r)_q 
=c+d
=\text{deg}(a, x)_q$. 
\par 
If $a=1$, then $(x,a)_q=(x+1)_q=(a, x)_q$, and  hence 
$\text{deg}(x, a)_q=\text{deg}(a, x)_q$. 
\par 
This completes the induction argument, 
and therefore the equation $(\ast )$ holds for all pairs $(a, x)$ of coprime integers with $1\leq a\leq x$. 

\par 
The equation \eqref{eq3-14} can be obtained from 
$(x, a)_q=(x-a, a)_q+q^{\lceil \frac{x}{a}\rceil } (r, a-r)_q$ 
and 
$\text{deg}(a, x)_q=\text{deg}(x, a)_q>\text{deg}(x-a, a)_q$. 

\par 
Finally, we will derive the equation \eqref{eq3-16}. 
To do this, let $k \geq 2$ be an integer, and 
we show that by induction on $k$ the two equations \eqref{eq3-16} and 
\begin{equation}\label{eq3-17}
(a, x)_q=q^{\mathrm{deg}(a, x)_q}(x, a)_{q^{-1}}
\end{equation}
hold for all pairs $(a, x)$ of coprime integers satisfying $1\leq a\leq x,\ x+a\leq k$ at the same time. 

\par 
I. When $a=x=1$, it can be immediately verified that the desired equations hold. 
\par 
I\kern-0.1em I. 
Let $k\geq 2$ be an integer, and assume that \eqref{eq3-16} and \eqref{eq3-17} hold for all coprime pairs $(a, x)$ of integers satisfying $1\leq a\leq x,\ x+a\leq k$. 
Let $(a, x)$ be a pair of coprime integers satisfying $1\leq a\leq x,\ a+x=k+1$, and $r$ be the remainder of $x$ divided by $a$. 
Since 
$(a-r)+r=a<a+x,\ a+(x-a)=x<a+x$, by induction hypothesis we have 
$(a, x)_q=(a-r, r)_q+q(a, x-a)_q
=q^{\text{deg}(a-r, r)_q}(r, a-r)_{q^{-1}}+q^{1+\text{deg}(x-a, a)_q}(x-a, a)_{q^{-1}}$. 
By \eqref{eq3-13} and \eqref{eq3-14} the right-hand side is rewritten as 
$q^{\text{deg}(a, x)_q}\bigl( q^{-\left\lceil \frac{x}{a}\right\rceil }(r, a-r)_{q^{-1}}+(x-a, a)_{q^{-1}} \bigr) 
=q^{\text{deg}(a, x)_q}(x, a)_{q^{-1}}$. 
Replacing $q$ in the equation with $q^{-1}$ we have $(a, x)_{q^{-1}}=q^{-\text{deg}(a, x)_q}(x, a)_q$. 
This is equivalent to \eqref{eq3-16}. 
This completes the induction argument. 
\end{proof} 

\begin{lem}\label{3-6}
Let $(a, x)$, $(b, y)$ be pairs of coprime integers such that $1\leq a\leq x$, $1\leq b\leq y$ and $\frac{x}{a}, \frac{y}{b}$ are Farey neighbors. 
Then
\begin{align}
\mathrm{deg}(a+b, x+y)_q
&=\mathrm{deg}(b, y)_q+\left\lceil \dfrac{x}{y} \right\rceil , \label{eq3-18}\\ 
\mathrm{deg}(a+b, x+y)_q
&=\mathrm{deg}(a, x)_q+\left\lfloor \dfrac{b}{a}+1 \right\rfloor . \label{eq3-19}
\end{align}
\end{lem}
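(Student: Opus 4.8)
The plan is to derive both degree formulas from the recursive definition \eqref{eq3-8} together with the degree identities collected in Lemma~\ref{3-5}, especially \eqref{eq3-13}, \eqref{eq3-14} and \eqref{eq3-15}. Since $\frac{x}{a}$ and $\frac{y}{b}$ are Farey neighbors with $1\le a\le x$ and $1\le b\le y$, the Farey sum $\frac{x+y}{a+b}$ is again irreducible and satisfies $1\le a+b\le x+y$, so $(a+b,x+y)_q$ is a legitimate instance of our $q$-integer. The natural strategy is induction on $a+b+x+y$ (equivalently on the number of Farey operations needed to reach $\frac{x+y}{a+b}$), splitting according to whether $a<b$ (hence $x<y$) or $a>b$ (hence $x>y$); the base cases $a+b\le 2$ are handled directly as in the proof of Theorem~\ref{3-3}.

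For \eqref{eq3-18}, first I would treat the case $a<b$, $x<y$. Here the defining recursion gives $(a+b,x+y)_q=(a+b-r',r')_q+q\,(a+b,x+y-a-b)_q$ where $r'$ is the remainder of $x+y$ mod $a+b$; but it is cleaner to use that $\frac{x+y}{a+b}=\frac{x}{a}\sharp\frac{y-x}{b-a}$ (when $a<b$ and $x<y$, the right parent of $\frac{x+y}{a+b}$ is $\frac{y-x}{b-a}$... actually one must check which of $\frac xa,\frac yb$ is the parent) and instead observe $\frac yb=\frac xa\sharp\frac{y-x}{b-a}$. Applying the recursion $(a+b,x+y)_q=(a,x)_q+q\,(b,y)_q$-type splitting is not literally \eqref{eq3-8}; rather, I would feed the pair $(a,x),(b,y)$ of parents into the relation behind Theorem~\ref{3-3}: since $\frac xa,\frac yb$ are Farey neighbors, $(a+b,x+y)_q=D_q(\frac xa\sharp\frac yb)$, and by Theorem~\ref{3-1} this equals $D_q(\frac xa)+q^{c_l-1}D_q(\frac yb)=(a,x)_q+q^{c_l-1}(b,y)_q$ with $c_l-1=\lceil x/y\rceil$ by Remark~\ref{3-2}. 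Now compare degrees: $\deg(a,x)_q$ versus $\deg(b,y)_q+\lceil x/y\rceil$. Using \eqref{eq3-14} repeatedly (or directly), $\deg(a,x)_q$ and $\deg(b,y)_q$ differ in a controlled way because $\frac xa,\frac yb$ are neighbors; the point is to show the $q^{\lceil x/y\rceil}(b,y)_q$ term strictly dominates, i.e. $\deg(a,x)_q<\deg(b,y)_q+\lceil x/y\rceil$, so that $\deg(a+b,x+y)_q=\deg(b,y)_q+\lceil x/y\rceil$. This strict inequality is the crux; I expect to prove it by the same induction, tracking how $\deg(a,x)_q-\deg(b,y)_q$ evolves along the Stern--Brocot tree and comparing with $\lceil x/y\rceil$ via Lemma~\ref{2-2}.

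For \eqref{eq3-19}, the symmetric companion, I would exploit the symmetry already available: by \eqref{eq3-15}, $\deg(a+b,x+y)_q=\deg(x+y,a+b)_q$, and applying \eqref{eq3-18} to the ``reversed'' Farey-neighbor pair $\frac ax,\frac by$ (which are Farey neighbors since $bx-ay=-1$, i.e. $xy$-swap) yields $\deg(x+y,a+b)_q=\deg(y,b)_q+\lceil a/b\rceil=\deg(b,y)_q+\lceil a/b\rceil$; but I actually want it in terms of $\deg(a,x)_q+\lfloor b/a+1\rfloor$, so I would instead apply \eqref{eq3-18} with the roles of the two parents interchanged, giving $\deg(a+b,x+y)_q=\deg(a,x)_q+\lceil y/x\rceil$, and then use $\lceil y/x\rceil=\lfloor y/x+1\rfloor=\lfloor b/a+1\rfloor$ — the last equality holding because $\lceil x/y\rceil$-type quantities for Farey neighbors match the corresponding denominator ratios by Lemma~\ref{2-2} (applied to the pair $(a,b)$ in place of $(x,y)$, noting $ay-bx=\pm1$). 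The main obstacle, as noted, is establishing the strict domination inequality $\deg(a,x)_q<\deg(b,y)_q+\lceil x/y\rceil$ cleanly in all four sub-cases of the induction; once that is in hand, both \eqref{eq3-18} and \eqref{eq3-19} follow by reading off the leading term of the two-term expression from Theorem~\ref{3-1} and invoking Lemma~\ref{2-2} to rewrite the ceilings.
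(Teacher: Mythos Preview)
Your central move---writing $(a+b,x+y)_q=(a,x)_q+q^{\lceil x/y\rceil}(b,y)_q$ and then reading off the degree---is exactly equation \eqref{eq3-20} of Theorem~\ref{3-7}, which in the paper is proved \emph{after} Lemma~\ref{3-6} and, in its case $x>y$, explicitly uses Lemma~\ref{3-6}. So as written your argument is circular. Moreover, the derivation you offer for that two-term formula is mistaken: Theorem~\ref{3-3} gives $D_q\bigl(\tfrac{x}{a}\sharp\tfrac{y}{b}\bigr)=(a,b)_q$ and $N_q\bigl(\tfrac{x}{a}\sharp\tfrac{y}{b}\bigr)=(x,y)_q$, neither of which is $(a+b,x+y)_q$; likewise $D_q(\tfrac{x}{a})\neq(a,x)_q$ in general (e.g.\ $D_q(2)=1$ while $(1,2)_q=1+q+q^2$). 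One can legitimately realise $(a+b,x+y)_q$ as $N_q\bigl(\tfrac{a+b}{a}\sharp\tfrac{x+y}{x}\bigr)$ and combine this with $[\alpha+1]_q=1+q[\alpha]_q$ to obtain $(a+b,x+y)_q=(a,x)_q+q\,(b,y)_q$, but this route requires $\tfrac{b}{a}\geq 1$ for Theorem~\ref{3-3} to apply, hence only covers the case $x<y$; for $x>y$ you are thrown back on precisely the degree inequality you set out to prove. Your symmetry argument for \eqref{eq3-19} has the same defect: swapping the two parents destroys the Farey-neighbor condition $ay-bx=1$, and passing to $\tfrac{a}{x},\tfrac{b}{y}$ violates the hypothesis $1\leq a\leq x$, so neither variant of ``apply \eqref{eq3-18} with roles reversed'' is available.

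The paper sidesteps all of this by a direct induction on $x+y$ that never touches Theorems~\ref{3-1} or~\ref{3-3}. From \eqref{eq3-13} one has $\deg(a+b,x+y)_q=1+\deg(a+b,(x-a)+(y-b))_q$, and then the induction hypothesis is applied either to the smaller Farey-neighbor pair $\tfrac{x-a}{a},\tfrac{y-b}{b}$ (when $a<x-a$ and $b<y-b$) or, after the flip supplied by \eqref{eq3-15}, to the pair $\tfrac{b}{y-b},\tfrac{a}{x-a}$ (when $a\geq x-a$ and $b\geq y-b$); the mixed cases are shown to be impossible. Lemma~\ref{2-2} then converts the resulting ceilings and floors into the desired form. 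The moral is that the degree identities of Lemma~\ref{3-5} alone are already strong enough to run this induction, \emph{before} any two-term splitting of $(a+b,x+y)_q$ into $(a,x)_q$ and $(b,y)_q$ has been established.
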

\begin{proof}
This lemma can be proved by induction on $k\geq 3$ when we set  $x+y=k$. 

\par 
I. Let $k=3$. Then 
$(x, y)=(1, 2)$ and $(a, b)=(1, 1)$. 
Thus \eqref{eq3-18}, \eqref{eq3-19} become 
$\text{deg}(2, 3)_q=\text{deg}(1, 2)_q+1, \  
\text{deg}(2, 3)_q=\text{deg}(1, 1)_q+2$, respectively. 
It can be easily see that these equations hold. 

\par 
I\kern-0.1em I. Let $k\geq 3$. 
Assume that \eqref{eq3-18}, \eqref{eq3-19} hold 
for all positive integers $a^{\prime}, b^{\prime}, x^{\prime}, y^{\prime}$ such that 
$x^{\prime}+y^{\prime}<k$,  $a^{\prime}y^{\prime}-b^{\prime}x^{\prime}=1,\ 1\leq a^{\prime}\leq x^{\prime},\ 1\leq b^{\prime}\leq y^{\prime}$. 
Let us consider positive integers $a, b, x, y$ which satisfy $x+y=k$ and $ay-bx=1,\ 1\leq a\leq x,\ 1\leq b\leq y$, and let $r$ be the remainder of $
x+y$ divided by $a+b$. 
Since 
$a+b<x+y$, we have 
$$(a+b, x+y)_q=(a+b-r, r)_q+q(a+b, (x+y)-(a+b))_q.$$
By \eqref{eq3-14} we have 
\begin{equation*}
\text{deg}(a+b, x+y)_q=1+\text{deg}(a+b, (x+y)-(a+b))_q. \tag*{$(\ast )$}
\end{equation*}

\par 
(1) In the case where $a<x-a$ and $b<y-b$, 
by induction hypothesis 
\begin{align*}
\text{deg}(a+b, (x+y)-(a+b))_q
&=\text{deg}(a+b, (x-a)+(y-b))_q\\ 
&=\text{deg}(b, y-b)_q+\left\lceil \frac{x-a}{y-b} \right\rceil \\ 
&=\text{deg}(a, x-a)_q+\left\lfloor \frac{b}{a}+1 \right\rfloor  . 
\end{align*}
Applying \eqref{eq3-13} and  Lemma~\ref{2-2} we have \eqref{eq3-18}, \eqref{eq3-19}. 

\par 
(2) The case where $a<x-a$ and $b\geq y-b$ does not occur since 
$2b-y
=2b-\frac{xb+1}{a}
=\frac{(2a-x)b-1}{a}<0$, which is a contradiction. 
Similarly, the case where $a\geq x-a$ and $b<y-b$ does not occur. 
\par 
(3) In the case where $a\geq x-a$ and $b\geq y-b$, 
we see that $a>x-a$ and $b>y-b$. 
\par 
(i) If $x-a>0$ and $y-b>0$, then one can apply induction hypothesis to $\text{deg}((y-b)+(x-a), b+a)_q$ since $a+b<x+y=k$. Then 
we have 
\begin{align*}
\text{deg}(a+b, (x-a)+(y-b))_q
&=\text{deg}((y-b)+(x-a), b+a)_q\\ 
&=\text{deg}(x-a, a)_q+\left\lceil \frac{b}{a} \right\rceil \\ 
&=\text{deg}(y-b, b)_q+\left\lfloor \frac{x-a}{y-b} +1\right\rfloor . 
\end{align*}
Using \eqref{eq3-13} and $(\ast )$ we have 
$\text{deg}(a+b, x+y)_q=\text{deg}(a, x)_q+\lceil \frac{b}{a} \rceil $. 
\par 
We see that $a>1$ and $\frac{b}{a}\not\in \mathbb{Z}$. 
So, $\lceil \frac{b}{a} \rceil =\lfloor \frac{b}{a} +1\rfloor $, and therefore, 
$\text{deg}(a+b, x+y)_q=\text{deg}(a, x)_q+\left\lfloor \frac{b}{a} +1\right\rfloor $. 
Similarly, by \eqref{eq3-13}, Lemma~\ref{2-2} and $(\ast )$, we have 
\begin{align*}
\text{deg}(a+b, x+y)_q
&=1+\text{deg}(y-b, b)_q +\left\lfloor \frac{x-a}{y-b} +1\right\rfloor \\ 
&=\text{deg}(b, y)_q+\left\lfloor \frac{x-a}{y-b} +1\right\rfloor . 
\end{align*}

\par 
$\bullet$ If $y-b>1$, then $x-a, y-b$ are coprime, and hence 
$\frac{x-a}{y-b}\not\in \mathbb{Z}$. It follows that  
$\lfloor \frac{x-a}{y-b} +1\rfloor =\lceil \frac{x-a}{y-b} \rceil =\lceil \frac{x}{y} \rceil $ by Lemma~\ref{2-2}, and 
hence $\text{deg}(a+b, x+y)_q=\text{deg}(b, y)_q+\left\lceil \frac{x}{y} \right\rceil $. 
\par 
$\bullet$ If $y-b=1$, then 
$\lfloor \frac{x-a}{y-b} +1\rfloor =\lceil \frac{x-a}{y-b} \rceil +1=\lceil \frac{x}{y} \rceil $ by Lemma~\ref{2-2}, 
and hence 
$\text{deg}(a+b, x+y)_q=\text{deg}(b, y)_q+\lceil \frac{x}{y} \rceil $. 

\par 
(ii)
If $y-b=0$, then $y=b=1$. 
However, this is a contradiction since the Farey neighbors $\frac{x}{a}, \frac{y}{b}=\frac{1}{1}$ should be greater than or equal to $1$. 

\par 
(iii)
If $x-a=0$, then $x=a=1$ and $y=b+1$. 
In this case we can directly prove \eqref{eq3-18}, \eqref{eq3-19} by easy computation. 
\end{proof} 

\begin{thm}\label{3-7}
Let $(a, x)$, $(b, y)$ be pairs of coprime integers such that $1\leq a\leq x$, $1\leq b\leq y$ and $\frac{x}{a}, \frac{y}{b}$ are Farey neighbors. 
Then
\begin{align}
(a+b, x+y)_q&=(a, x)_q+q^{\lceil \frac{x}{y} \rceil }(b, y)_q, \label{eq3-20} \\ 
(y+x, b+a)_q&=(y, b)_q+q^{\lfloor \frac{b}{a} +1\rfloor }(x, a)_q. \label{eq3-21}
\end{align}
\end{thm}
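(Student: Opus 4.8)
The plan is to prove both equations \eqref{eq3-20} and \eqref{eq3-21} simultaneously by induction on $k=x+y$, mirroring the structure of the proof of Lemma~\ref{3-6} (which already established the degree formulas \eqref{eq3-18} and \eqref{eq3-19} that will be used throughout). The base case $k=3$ forces $(x,y)=(1,2)$, $(a,b)=(1,1)$, so \eqref{eq3-20} reads $(2,3)_q=(1,2)_q+q(1,1)_q$ and \eqref{eq3-21} reads $(3,2)_q=(2,1)_q+q^2(1,1)_q$; these follow at once from the conventions $(1,n)_q=(n,1)_q=[n+1]_q$ and the defining recursion \eqref{eq3-8} (indeed, this is essentially Part~I of the proof of Theorem~\ref{3-3}).

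For the inductive step, fix coprime Farey-neighbor data $a,b,x,y$ with $x+y=k$, $ay-bx=1$, $1\le a\le x$, $1\le b\le y$, and let $r$ be the remainder of $x+y$ divided by $a+b$. Since $a+b<x+y$, the first branch of \eqref{eq3-8} gives
\begin{equation*}
(a+b,x+y)_q=(a+b-r,r)_q+q\,(a+b,(x+y)-(a+b))_q=(a+b-r,r)_q+q\,(a+b,(x-a)+(y-b))_q.
\end{equation*}
The key observation is that $\tfrac{x-a}{a}$ and $\tfrac{y-b}{b}$ (when positive) are again Farey neighbors, so the pairs $(a,x-a)$, $(b,y-b)$ fit the hypotheses of the theorem with smaller total, and the induction hypothesis applies to $(a+b,(x-a)+(y-b))_q$. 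One then has to combine this with the identity $(a+b-r,r)_q=(a-r',r')_q+\text{(lower-order correction)}$ relating the ``remainder part'' for $a+b$ to that for $a$ — more precisely, one tracks how the Euclidean step for the pair $(a+b,x+y)$ refines to the Euclidean step for $(a,x)$, using that $r$ (remainder of $x+y$ by $a+b$) and the remainder of $x$ by $a$ are compatible because $ay-bx=1$. I would organize the verification by the same case split as in Lemma~\ref{3-6}: (1) $a<x-a$ and $b<y-b$; (2) the mixed cases, which are ruled out exactly as before by the sign computation $2b-y=\tfrac{(2a-x)b-1}{a}<0$; (3) $a>x-a$ and $b>y-b$, with subcases $x-a=0$ (so $x=a=1$, handled directly) and $y-b=1$ versus $y-b>1$ (controlling whether $\tfrac{x-a}{y-b}$ is an integer, hence whether $\lceil\cdot\rceil$ picks up an extra $+1$, via Lemma~\ref{2-2}).

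The exponents are the delicate point: in \eqref{eq3-20} the power attached to $(b,y)_q$ must come out to $\lceil x/y\rceil$, and showing this requires matching the ceiling-of-quotient produced by the nested Euclidean recursion against $\lceil x/y\rceil$, which is precisely what the degree identities \eqref{eq3-18}–\eqref{eq3-19} of Lemma~\ref{3-6} are designed to supply (the degrees of the two summands must be unequal so that no cancellation of leading terms occurs, pinning down the exponent). For \eqref{eq3-21} I would apply the palindromy relation \eqref{eq3-16}, namely $(x,a)_q=q^{\deg(x,a)_q}(a,x)_{q^{-1}}$ together with $\deg(a,x)_q=\deg(x,a)_q$ from \eqref{eq3-15}: substituting $q\mapsto q^{-1}$ in \eqref{eq3-20} for the reversed pairs $(x,a)$, $(y,b)$ and then clearing powers of $q$ using the degree formula \eqref{eq3-19} should convert \eqref{eq3-20} into \eqref{eq3-21} essentially formally, so that only \eqref{eq3-20} needs the full inductive labor. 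The main obstacle I anticipate is bookkeeping in case (3): correctly identifying which Euclidean branch of \eqref{eq3-8} is taken for each of $(a+b,x+y)_q$, $(a,x)_q$, $(b,y)_q$ and $(a+b-r,r)_q$, and verifying the exponent $\lceil x/y\rceil$ agrees on the nose — this is where the $y-b=1$ boundary subcase and Lemma~\ref{2-2} must be invoked with care.
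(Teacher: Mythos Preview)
Your approach is genuinely different from the paper's, and the crucial step is left unworked. The paper does \emph{not} induct on $x+y$ via the recursion \eqref{eq3-8}. Instead, for \eqref{eq3-20} it splits into $x<y$ and $x>y$. When $x<y$ (so $\lceil x/y\rceil=1$), it notes that $\tfrac{b}{a},\tfrac{y}{x}$ are Farey neighbors and invokes the already-proved Theorem~\ref{3-3} to rewrite both $\tfrac{(b,y)_q}{(a,x)_q}$ and $\tfrac{(a+b,x+y)_q}{(a,x)_q}$ as $q$-rationals; equation \eqref{eq3-20} then drops out of a five-line computation with the weighted $q$-Farey sum of Theorem~\ref{3-1}. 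The case $x>y$ is reduced to the case $x<y$ by the palindromy \eqref{eq3-16} together with Lemma~\ref{3-6}, exactly the mechanism you propose for \eqref{eq3-21} (and indeed the paper derives \eqref{eq3-21} that way too). So the paper's argument is short precisely because it cashes in Theorem~\ref{3-3}.

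The gap in your outline is the ``lower-order correction''. After applying the induction hypothesis to $(a+b,(x-a)+(y-b))_q$ in case~(1), what remains to prove is
\[
(a+b-r,r)_q=(a-r_1,r_1)_q+q^{\lceil x/y\rceil}(b-r_2,r_2)_q,
\]
where $r_1,r_2$ are the remainders of $x$ by $a$ and of $y$ by $b$. For this to be another instance of the theorem you would need (i)~$r=r_1+r_2$, (ii)~$\tfrac{r_1}{a-r_1},\tfrac{r_2}{b-r_2}$ Farey neighbors, and (iii)~$\lceil r_1/r_2\rceil=\lceil x/y\rceil$. These hold when $a,b>1$ (one checks that Farey neighbors $\tfrac{x}{a}<\tfrac{y}{b}$ with $a,b>1$ satisfy $\lfloor x/a\rfloor=\lfloor y/b\rfloor$, whence $r=r_1+r_2$, and (ii) follows from $ay-bx=1$), but they \emph{fail on the boundary}: for instance $a=2,x=7,b=1,y=4$ lies in your case~(1) yet $r=2\neq 1=r_1+r_2$. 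So the boundary $a=1$ or $b=1$ needs separate treatment, and even in the interior you must still prove (iii) and handle case~(3) where $x-a<a$. None of this is impossible, but it is real work that your sketch does not do, and it is exactly what the paper sidesteps by routing through Theorem~\ref{3-3}.
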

\begin{proof}
\par 
First, we will show that the equation \eqref{eq3-20} holds. 
If $a+b=2$, that is, $a=b=1$, then $y=x+1$. 
In this case, \eqref{eq3-20} is written as 
$(2, 2x+1)_q=(1, x)_q+q(1, x+1)_q$. 
This equation can be obtained by direct computation. 
\par 
Next we consider the case $a+b\geq 3$. 
We note that $a\leq x,\ b<y$ by assumption of the theorem and $a+b\geq 3$. 
\par 
(1) Let us consider the case $x<y$. 
We prove the equation 
\begin{equation}\label{eq3-22}
\dfrac{(a+b, x+y)_q}{(a, x)_q}=1+q^{\left\lceil \frac{x}{y} \right\rceil }\dfrac{(b, y)_q}{(a, x)_q}, 
\end{equation}
which is equivalent to \eqref{eq3-20}. 
Since $\lceil \frac{x}{y} \rceil =1$ and $b<y$, we have 
$$
(\text{R.H.S. of \eqref{eq3-22}})
=1+q\Bigl[ \dfrac{b}{a}\sharp \dfrac{y}{x}\Bigr]_q 
=\dfrac{D_q\bigl( \frac{b}{a}\bigr) +qD_q\bigl( \frac{y}{x}\bigr) +qN_q\bigl( \frac{b}{a}\bigr) +q^2N_q\bigl( \frac{y}{x}\bigr) }{D_q\bigl( \frac{b}{a}\bigr) +qD_q\bigl( \frac{y}{x}\bigr) }. 
$$
\noindent 
On the other hand, since $\lceil \frac{a+b}{x+y} \rceil =1$ from $a\leq x,\ b<y$, it follows that 
\begin{align*}
(\text{L.H.S. of \eqref{eq3-22}})
&=\dfrac{D_q\bigl( \frac{a}{b}\bigr)+qN_q\bigl( \frac{a}{b}\bigr) +q^{\left\lceil \frac{a+b}{x+y} \right\rceil }\bigl( D_q\bigl( \frac{y}{x}\bigr)+qN_q\bigl( \frac{y}{x}\bigr) \bigr)}{D_q\bigl( \frac{b}{a}\bigr)+q^{\left\lceil \frac{a+b}{x+y} \right\rceil }D_q\bigl( \frac{y}{x}\bigr)}  \\ 
&=\dfrac{D_q\bigl( \frac{a}{b}\bigr)+qN_q\bigl( \frac{a}{b}\bigr) +q\bigl( D_q\bigl( \frac{y}{x}\bigr)+qN_q\bigl( \frac{y}{x}\bigr)\bigr) }{D_q\bigl( \frac{b}{a}\bigr)+qD_q\bigl( \frac{y}{x}\bigr)} \\ 
&=(\text{R.H.S. of \eqref{eq3-22}}). 
\end{align*}

\par 
(2) Let us consider the case $x>y$. 
The equation \eqref{eq3-20} is rewritten as 
\begin{equation}\label{eq3-23}
\dfrac{(a+b, x+y)_q}{(b, y)_q}=\dfrac{(a, x)_q}{(b, y)_q}+q^{\left\lceil \frac{x}{y} \right\rceil }. 
\end{equation}
To prove the equation, 
applying \eqref{eq3-16} and the result of (1), we have 
$$
\dfrac{(a+b, x+y)_q}{(b, y)_q}
=q^{\text{deg} (x+y, a+b)_q-\text{deg} (y, b)_q} \Bigl( 1+q^{-1}\dfrac{(x, a)_{q^{-1}}}{(y, b)_{q^{-1}}}\Bigr) , 
$$
and hence 
$$
(\text{L.H.S. of \eqref{eq3-23}})
=q^{\text{deg} (x+y, a+b)_q-\text{deg} (y, b)_q} +q^{\text{deg} (x+y, a+b)_q-\text{deg} (x, a)_q-1}\dfrac{(a, x)_q}{(b, y)_q}.$$
By Lemma~\ref{3-6} and \eqref{eq3-16} the above value coincides with the right-hand side of \eqref{eq3-23}. 
\par 
The equation \eqref{eq3-21} can be obtained from \eqref{eq3-20} by using Lemma~\ref{3-5} as follows: 
\begin{align*}
(y+x, b+a)_q
&=q^{\text{deg}(a+b, x+y)_q}\bigl( (a, x)_{q^{-1}}+q^{-\left\lceil \frac{x}{y}\right\rceil }(b, y)_{q^{-1}}\bigr) \displaybreak[0]\\ 
&=q^{\left\lfloor \frac{b}{a} +1\right\rfloor }(x, a)_q+(y, b)_q. 
\end{align*}
\end{proof}

\begin{cor}
Let $(a, x)$, $(b, y)$ be pairs of coprime integers such that $\frac{x}{a}, \frac{y}{b}$ are Farey neighbors. 
We define $(a, x)_q\oplus (b, y)_q$ by 
\begin{equation}
(a, x)_q\oplus (b, y)_q:=
\begin{cases}
(a, x)_q+q^{\left\lceil \frac{x}{y}\right\rceil }(b, y)_q & \text{if $1\leq a\leq x$},\\ 
(a, x)_q+q^{\left\lfloor \frac{x}{y} +1\right\rfloor }(b, y)_q & \text{if $1\leq y\leq b$}. \ \ 
\end{cases}
\end{equation}
Then $(a, x)_q\oplus (b, y)_q=(a+b, x+y)_q$. 
\end{cor}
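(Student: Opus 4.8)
The plan is to read off the corollary from Theorem~\ref{3-7} by treating the two branches in the definition of $(a,x)_q\oplus(b,y)_q$ separately and checking that, together, they cover every pair of Farey neighbors. Since $\frac{x}{a},\frac{y}{b}$ are Farey neighbors, $ay-bx=1$ and hence $\frac{x}{a}<\frac{y}{b}$. First I would establish the dichotomy: either $\frac{x}{a}\ge 1$ (that is, $1\le a\le x$), or $\frac{x}{a}<1$, in which case I claim $\frac{y}{b}\le 1$ (that is, $1\le y\le b$). Indeed, if $a>x$ and $y\ge b+1$, then $a(b+1)\le ay=bx+1$, so $b(a-x)\le 1-a$, which is impossible because $b\ge 1$, $a-x\ge 1$ and $a\ge 1$. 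The two cases are mutually exclusive, since $\frac{x}{a}\ge 1$ and $\frac{y}{b}\le 1$ cannot hold together with $\frac{x}{a}<\frac{y}{b}$; and the degenerate situations where one of the fractions is $\frac{1}{0}$ (so that $b=0$ or $x=0$) fall outside both ranges and are disposed of directly using the conventions $N_q(\tfrac{1}{0})=1$, $D_q(\tfrac{1}{0})=0$ and $(1,n)_q=(n,1)_q=[1+n]_q$.

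In the first case, $1\le a\le x$, the chain $\frac{y}{b}>\frac{x}{a}\ge 1$ forces $1\le b<y$, so the pairs $(a,x)$ and $(b,y)$ satisfy the hypotheses of Theorem~\ref{3-7} verbatim. Equation~\eqref{eq3-20} then gives $(a+b,x+y)_q=(a,x)_q+q^{\lceil x/y\rceil}(b,y)_q$, which is exactly the first branch of the definition of $(a,x)_q\oplus(b,y)_q$.

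In the second case, $1\le y\le b$ and $\frac{x}{a}<1$, so $1\le x<a$; here I would pass to the \emph{reciprocal} Farey neighbors $\frac{b}{y}$ and $\frac{a}{x}$. The identity $ya-bx=ay-bx=1$ shows that $\frac{b}{y},\frac{a}{x}$ are Farey neighbors in this order, and now $1\le y\le b$ and $1\le x\le a$, so the pairs $(y,b)$ and $(x,a)$ meet the hypotheses of Theorem~\ref{3-7}. Applying equation~\eqref{eq3-21} with the relabelling $(a,x,b,y)\mapsto(y,b,x,a)$ of the variables appearing there transforms $(y+x,b+a)_q=(y,b)_q+q^{\lfloor b/a+1\rfloor}(x,a)_q$ into $(a+b,x+y)_q=(a,x)_q+q^{\lfloor x/y+1\rfloor}(b,y)_q$, which is the second branch of the definition. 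Combining the two cases yields $(a,x)_q\oplus(b,y)_q=(a+b,x+y)_q$. The only point that requires care is the bookkeeping of these boundary cases and the correct relabelling that carries \eqref{eq3-21} into the desired identity; once the dictionary between $\frac{x}{a},\frac{y}{b}$ and their reciprocals is set up, the rest is a direct consequence of Theorem~\ref{3-7}.
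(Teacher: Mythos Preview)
Your argument is correct and matches the paper's own proof, which simply states that the corollary is a direct consequence of Theorem~\ref{3-7}: the first branch is exactly \eqref{eq3-20}, and the second branch is \eqref{eq3-21} after the relabelling $(a,x,b,y)\mapsto(y,b,x,a)$ you describe. Your added dichotomy argument and treatment of the boundary cases make explicit what the paper leaves implicit; the only slip is the parenthetical ``$b=0$ or $x=0$'' for the $\tfrac{1}{0}$ case (in fact $x=0$ corresponds to $\tfrac{x}{a}=\tfrac{0}{1}$, whose reciprocal is $\tfrac{1}{0}$), but this does not affect the argument.
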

\begin{proof}
The result is a direct consequence of Theorem~\ref{3-7}. 
\end{proof}

\section{The normalized Jones polynomials based on $q$-Farey sums and its applications} 
By \cite[Proposition A.1]{M-GO2} and $[\alpha +1]_q=\frac{D_q(\alpha )+qN_q(\alpha )}{D_q(\alpha )}$ we see that the normalized Jones polynomial $J_{\alpha}(q)$ can be computed by the following formula: 

\begin{lem}\label{4-1}
For a rational number $\alpha >1$, 
\begin{equation}\label{eq4-1}
J_{\alpha }(q)=qN_q(\alpha )+(1-q)D_q(\alpha ). 
\end{equation} 
\end{lem}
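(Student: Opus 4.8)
The plan is to prove the identity $J_{\alpha}(q)=qN_q(\alpha)+(1-q)D_q(\alpha)$ by relating the normalized Jones polynomial directly to the $q$-rational number $[\alpha]_q=\frac{N_q(\alpha)}{D_q(\alpha)}$. As indicated in the lead-in sentence, the backbone of the argument is \cite[Proposition A.1]{M-GO2}, which (in the notation of Morier-Genoud and Ovsienko) expresses the normalized Jones polynomial of the rational link associated to $\alpha$ in terms of the $q$-deformation of $\alpha$; combined with the elementary observation $[\alpha+1]_q=\dfrac{D_q(\alpha)+qN_q(\alpha)}{D_q(\alpha)}$, this should pin down $J_\alpha(q)$ up to rewriting. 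So the first step is to recall precisely the statement of \cite[Proposition A.1]{M-GO2} and to match conventions: their normalization of the Jones polynomial (constant term $1$), their variable convention, and the precise $q$-rational number appearing there (whether it is $[\alpha]_q$ or $[\alpha+1]_q$ or a shift thereof). The substitution $t=-q^{-1}$ used throughout Section~1 must be tracked carefully here.

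Second, I would compute $[\alpha+1]_q$ explicitly. Since adding $1$ to a negative continued fraction $[c_1,\dots,c_l]^-$ amounts to replacing $c_1$ by $c_1+1$, and since $[c_1+1]_q=[c_1]_q+q^{c_1}$ matches the recursion for $N_q,D_q$ under the weighted Farey sum of Theorem~\ref{3-1}, one checks that $N_q(\alpha+1)=D_q(\alpha)+qN_q(\alpha)$ and $D_q(\alpha+1)=D_q(\alpha)$. This yields $[\alpha+1]_q=\dfrac{D_q(\alpha)+qN_q(\alpha)}{D_q(\alpha)}$ as asserted. The point of introducing $\alpha+1$ is presumably that \cite[Proposition A.1]{M-GO2} naturally produces the numerator of $[\alpha+1]_q$, namely $D_q(\alpha)+qN_q(\alpha)$, as (part of) the normalized Jones polynomial.

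Third, I would assemble these pieces: write $J_\alpha(q)$ as delivered by \cite[Proposition A.1]{M-GO2}, substitute the formula for $[\alpha+1]_q$, and simplify. The expected shape is $J_\alpha(q)=$ (numerator of $[\alpha+1]_q$) minus a correction coming from the denominator, or equivalently a combination $\lambda\, N_q(\alpha)+\mu\, D_q(\alpha)$ with $q$-linear coefficients $\lambda,\mu$. One then checks that this combination equals $qN_q(\alpha)+(1-q)D_q(\alpha)$; the quickest sanity check is to specialize $q=1$, where $N_1(\alpha)=r$, $D_1(\alpha)=s$ for $\alpha=r/s$, giving $J_\alpha(1)=r+(1-1)s=r$, which must equal the sum of coefficients of $J_\alpha(q)$ — consistent with $J_\alpha$ being the normalized Jones polynomial of a link whose determinant is $r$ (for rational links, $|V_L(-1)|=r$). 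I would also verify against the worked example $\alpha=\frac{12}{5}$: there $[\tfrac{12}{5}]_q=\frac{N_q}{D_q}$ should give $qN_q+(1-q)D_q=1+q+3q^2+2q^3+2q^4+2q^5+q^6$.

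The main obstacle I anticipate is purely bookkeeping: reconciling the sign and variable conventions between \cite{M-GO2} and the present paper (the excerpt explicitly flags, in the Remark after Proposition~\ref{2-4}, that a substitution is \emph{inverse} of the one in related papers), so the risk is an off-by-$q$-power or a $q\leftrightarrow q^{-1}$ slip. A secondary subtlety is the constant-term normalization: \cite[Proposition A.1]{M-GO2} gives a polynomial whose constant term is automatically $1$ only for $\alpha>1$ (or after the relevant shift), so I would double-check that $qN_q(\alpha)+(1-q)D_q(\alpha)$ indeed has constant term $1$ — which follows because $D_q(\alpha)$ has constant term $1$ and $qN_q(\alpha)$ contributes nothing to the constant term — and positive leading behavior consistent with $J_\alpha$ being a genuine polynomial (no negative powers), which also needs $\deg(qN_q(\alpha))$ versus $\deg(D_q(\alpha))$ to be compared, using the degree formulas from Lemma~\ref{3-5}.
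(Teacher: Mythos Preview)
Your proposal is correct and follows essentially the same approach as the paper: the paper does not give a detailed proof but simply records (in the sentence preceding the lemma) that the formula follows from \cite[Proposition~A.1]{M-GO2} together with the identity $[\alpha+1]_q=\dfrac{D_q(\alpha)+qN_q(\alpha)}{D_q(\alpha)}$, which is exactly the backbone of your argument. Your additional sanity checks (the $q=1$ specialization, the worked example, and the constant-term verification) are reasonable extras but not needed for the paper's purposes.
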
 

Furthermore we have: 

\begin{lem}\label{4-2}
The normalized Jones polynomials $J_{\alpha }(q)$ can be inductively computed from the initial data $J_1(q)=1,\ J_{\infty}(q)=q$ and 
the formula
\begin{equation}\label{eq4-2}
J_{\alpha \sharp \beta}(q)=J_{\alpha }(q)+q^{\left\lceil \frac{N(\alpha )}{N(\beta )}\right\rceil }J_{\beta}(q)
\end{equation}
for all Farey neighbours $\alpha , \beta>1$, where 
$N(\alpha ), N(\beta )$ are the numerators of $\alpha , \beta$, respectively. 
\end{lem}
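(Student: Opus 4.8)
The plan is to obtain the lemma as a purely formal consequence of Lemma~\ref{4-1}, Theorem~\ref{3-1} and Remark~\ref{3-2}, with Lemma~\ref{2-1} supplying the fact that the recursion actually reaches every rational. The only computation involved is a one-line regrouping; the rest is bookkeeping on the (extended) Stern--Brocot tree, and in particular on how it interfaces with the two boundary vertices $1$ and $\infty$.

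First I would record the base cases. Since $L(1)$ is the unknot, $V_1(t)=1$ and hence $J_1(q)=1$; and $J_\infty(q)=q$ is precisely the value produced by \eqref{eq4-1} at $\frac10$ under the conventions $N_q(\frac10)=1$, $D_q(\frac10)=0$, so it is consistent to take it as the initial datum at the boundary vertex $\infty$. For the recursive step, let $\alpha,\beta$ be Farey neighbours with $\alpha,\beta\geq 1$ and write $\alpha\sharp\beta=[c_1,\ldots,c_l]^-$. Substituting the weighted Farey sum formulas $N_q(\alpha\sharp\beta)=N_q(\alpha)+q^{c_l-1}N_q(\beta)$ and $D_q(\alpha\sharp\beta)=D_q(\alpha)+q^{c_l-1}D_q(\beta)$ of Theorem~\ref{3-1} into \eqref{eq4-1} and collecting the terms carrying $q^{c_l-1}$, one finds
\begin{equation*}
J_{\alpha\sharp\beta}(q)=\bigl(qN_q(\alpha)+(1-q)D_q(\alpha)\bigr)+q^{c_l-1}\bigl(qN_q(\beta)+(1-q)D_q(\beta)\bigr)=J_\alpha(q)+q^{c_l-1}J_\beta(q),
\end{equation*}
and then Remark~\ref{3-2} identifies the exponent as $c_l-1=\lceil N(\alpha)/N(\beta)\rceil$, which gives \eqref{eq4-2}.

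It remains to see that \eqref{eq4-2} together with $J_1$ and $J_\infty$ pins down $J_\alpha$ for every rational $\alpha>1$. By Lemma~\ref{2-1}(2) each such $\alpha$ has a unique pair of Farey-neighbour parents $\alpha_1<\alpha_2$, and one checks these are again $\geq 1$ and strictly nearer the root (indeed $N(\alpha)=N(\alpha_1)+N(\alpha_2)$ with $N(\alpha_i)\geq 1$), so a straightforward induction on $N(\alpha)$ closes the argument, the base of the induction being $2=1\sharp\infty$. The step that needs the most care is exactly this interface with the boundary vertices: one must confirm that Theorem~\ref{3-1} and Remark~\ref{3-2} remain valid when $\alpha=1$ (exponent $\lceil 1/N(\beta)\rceil=1$) or $\beta=\infty$ (exponent $\lceil N(\alpha)/1\rceil=N(\alpha)$), which is why I would verify the two base values through \eqref{eq4-1} and the conventions rather than treat them as a black box. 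No genuinely new estimate is required beyond the results already established.
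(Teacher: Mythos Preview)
Your proof is correct and follows essentially the same route as the paper: the paper's proof consists of the single sentence that \eqref{eq4-2} is obtained by combining Theorem~\ref{3-1}, Lemma~\ref{4-1} and equation~\eqref{eq3-7}, which is exactly your regrouping computation. You have simply been more careful than the paper in spelling out the base cases and the inductive reach via Lemma~\ref{2-1}, including the boundary instances $\alpha=1$ and $\beta=\infty$.
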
 
\begin{proof}
The equation \eqref{eq4-2} is obtained from combining Theorem~\ref{3-1}, Lemma~\ref{4-1} and the equation \eqref{eq3-7}. 
\end{proof} 

The normalized Jones polynomial $J_{\alpha}(q)$ can be computed from $(a, b)_q$ as follows. 

\begin{thm}\label{4-3}
Let $(a, x)$ be a pair of coprime integers with $1\leq a<x$. Then
\begin{align}
J_{\frac{x}{a}}(q)&=q^2(a, x-a)_q-(q-1)(a, x)_q, \label{eq4-3} \\ 
(a, x)_q&=J_{\frac{x}{a}}(q)+q(a-r, r)_q,  \label{eq4-4}
\end{align}
where $r$ is the remainder when  $x$ is divided by $a$. 
\end{thm}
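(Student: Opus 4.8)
The plan is to show first that \eqref{eq4-3} and \eqref{eq4-4} are equivalent, and then to prove \eqref{eq4-4} by induction on $a+x$. For the equivalence: the defining recursion \eqref{eq3-8} in the case $a<x$ reads $(a,x)_q=(a-r,r)_q+q(a,x-a)_q$, hence $q^{2}(a,x-a)_q=q(a,x)_q-q(a-r,r)_q$; inserting this into \eqref{eq4-3} produces \eqref{eq4-4}, and the manipulation reverses. Using \eqref{eq3-8} once more, \eqref{eq4-4} is in turn equivalent to the single identity
$$J_{\tfrac{x}{a}}(q)=q\,(a,x-a)_q+(1-q)\,(a-r,r)_q,$$
which is what I would prove. (An alternative that cleanly implies it through Lemma~\ref{4-1} is the pair $N_q(\tfrac{x}{a})=(a,x-a)_q$ and $D_q(\tfrac{x}{a})=(a-r,r)_q$, provable by the same induction via Theorem~\ref{3-1}; I follow the direct route below, which keeps the remainder bookkeeping on one side.)

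The base case is $a=1$: then $r=0$, $(1,x-1)_q=[x]_q$ and $(1,0)_q=1$, so the identity is just $J_{x}(q)=q[x]_q+(1-q)$, which is Lemma~\ref{4-1} since $N_q(\tfrac{x}{1})=[x]_q$, $D_q(\tfrac{x}{1})=1$. For the inductive step, write $\tfrac{x}{a}=\tfrac{p}{c}\sharp\tfrac{s}{d}$ for the Stern-Brocot parents, so $\tfrac{p}{c}<\tfrac{x}{a}<\tfrac{s}{d}$, $p+s=x$, $c+d=a$; since $\tfrac{1}{1}$ is an ancestor of $\tfrac{x}{a}$ lying below it one has $\tfrac{p}{c}\geq 1$, and as $a\geq 2$ (a right parent $\tfrac{1}{0}$ would force $c=1$, hence $a=1$) the fraction $\tfrac{s}{d}$ is a genuine rational $>1$. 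By Lemma~\ref{4-2} (used with $J_{1}(q)=1$, $J_{\infty}(q)=q$, $N(\tfrac{1}{0})=1$) we get $J_{\tfrac{x}{a}}(q)=J_{\tfrac{p}{c}}(q)+q^{e}J_{\tfrac{s}{d}}(q)$ with $e=\lceil p/s\rceil$ by Remark~\ref{3-2}. Feeding in the inductive hypothesis for $J_{\tfrac{p}{c}}(q)$ and $J_{\tfrac{s}{d}}(q)$ (or, when $\tfrac{p}{c}=\tfrac{1}{1}$, the direct check $J_{1}(q)=q\cdot 1+(1-q)\cdot 1$), the step is reduced to the two $(\cdot,\cdot)_q$-recursions
$$(a,x)_q=(c,p)_q+q^{e}(d,s)_q,\qquad (a,x-a)_q=(c,p-c)_q+q^{e}(d,s-d)_q.$$

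The first recursion is Theorem~\ref{3-7}\,\eqref{eq3-20} applied to $\tfrac{p}{c},\tfrac{s}{d}$, the hypotheses $1\leq c\leq p$ and $1\leq d\leq s$ being automatic. The second is the crux: $\tfrac{p-c}{c}$ and $\tfrac{s-d}{d}$ are again Farey neighbours, so the Corollary to Theorem~\ref{3-7} writes $(a,x-a)_q=(c+d,(p-c)+(s-d))_q$ as $(c,p-c)_q$ plus $q^{\lceil(p-c)/(s-d)\rceil}(d,s-d)_q$ when $c\leq p-c$ (i.e.\ $\tfrac{p}{c}\geq 2$), or plus $q^{\lfloor(p-c)/(s-d)+1\rfloor}(d,s-d)_q$ when $s-d\leq d$ (i.e.\ $\tfrac{s}{d}\leq 2$), and exactly one case occurs since a left parent $<2$ forces the right parent $\leq 2$. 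One must then identify the exponent with $e=\lceil p/s\rceil$: applying Lemma~\ref{2-2} to the Farey neighbours $\tfrac{p-c}{s-d},\tfrac{c}{d}$ and using $\tfrac{p}{s}=\tfrac{(p-c)+c}{(s-d)+d}$ gives $\lceil p/s\rceil=\lfloor(p-c)/(s-d)+1\rfloor$, which equals $\lceil(p-c)/(s-d)\rceil$ precisely when $s-d>1$ — and $s-d=1$ forces $\tfrac{p}{c}<2$, so the two branches match up. The degenerate parents ($\tfrac{p}{c}=\tfrac{1}{1}$, which also forces $s-d=1$, and $\tfrac{s}{d}=\tfrac{1}{0}$, which forces $a=1$) are absorbed into the base case and the convention $(1,0)_q=1$. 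I expect this exponent reconciliation — matching the power of $q$ prescribed by $c_{l}-1$ in Theorem~\ref{3-1}/Remark~\ref{3-2} against the one prescribed by the ceiling/floor in the Corollary for the shifted pair $\bigl(\tfrac{p-c}{c},\tfrac{s-d}{d}\bigr)$ — to be the main technical obstacle.
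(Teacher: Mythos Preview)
Your proposal is correct and follows essentially the same route as the paper's proof. Both arguments prove \eqref{eq4-3} by induction on the Stern--Brocot structure: decompose $J_{\frac{x}{a}}(q)$ via Lemma~\ref{4-2}, apply the inductive hypothesis to the two parents, and then reduce to the pair of $(\cdot,\cdot)_q$-recursions that are supplied by Theorem~\ref{3-7} (and its Corollary) once the exponent $\lceil p/s\rceil$ is matched with $\lceil (p-c)/(s-d)\rceil$ or $\lfloor (p-c)/(s-d)+1\rfloor$ through Lemma~\ref{2-2}. The paper organizes the case split as $d-c>1$ versus $d-c=1$; your dichotomy $\tfrac{p}{c}\geq 2$ versus $\tfrac{s}{d}\leq 2$ is the same split, and your treatment of the degenerate parent $\tfrac{p}{c}=\tfrac{1}{1}$ corresponds to the paper's case~(1). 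The only cosmetic differences are that you front-load the equivalence \eqref{eq4-3}$\Leftrightarrow$\eqref{eq4-4} (the paper instead derives \eqref{eq4-4} from \eqref{eq4-3} at the end), and that your stated intention to prove the reformulated identity $J_{x/a}(q)=q(a,x-a)_q+(1-q)(a-r,r)_q$ does not quite match the two recursions you actually list --- those are precisely the ones needed for \eqref{eq4-3}, not for the $(a-r,r)_q$ term --- but since all three recursions are interchangeable via \eqref{eq3-8}, this is harmless.
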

\begin{proof}
Let us prove the equation \eqref{eq4-3}. 
It is proved by induction on the number of times of the operation $\sharp $. 
\par 
First of all, let us consider the case $a=1$. 
Dividing into the two cases where $b=1$ or not, we can easily see that the equation \eqref{eq4-3} holds. 
\par 
Next, let $\alpha =\frac{b}{a},\ \beta =\frac{d}{c}\geq 1$ be Farey neighbors, and 
assume that they satisfy \eqref{eq4-3}. 
Then we show that for $\alpha \sharp\beta =\frac{b+d}{a+c}$  the equation \eqref{eq4-1} holds by dividing into the four cases as follows. 
\par 
(1) In the case where $a=b$, we have $a=b=1$ and $c=1,\ d=2$. 
So, $\alpha \sharp\beta =\frac{3}{2}$ and hence the right-hand side of \eqref{eq4-3} is $q^2(2, 3-2)_q-(q-1)(2, 3)_q=1+q+q^3$. 
On the other hand, by $\frac{3}{2}=\frac{1}{1}\sharp \frac{2}{1}$ we have 
$J_{\frac{3}{2}}(q)=J_1(q)+q^{\lceil \frac{1}{2}\rceil } J_2(q)=1+q+q^3$. 
Thus, in this case \eqref{eq4-3} holds. 

\par 
(2) In the case where $a<b$ and $d-c>1$, by Lemmas~\ref{2-2}, \ref{4-2}, induction hypothesis and \eqref{eq3-16} we have 
\begin{align*}
J_{\alpha \sharp \beta }(q)
&=q^2(a, b-a)_q-(q-1)(a, b)_q+q^{\left\lceil \frac{b}{d}\right\rceil}\bigl( q^2(c, d-c)_q-(q-1)(c, d)_q\bigr) \displaybreak[0]\\
&=q^2\bigl( (a, b-a)_q +q^{\left\lceil \frac{b-a}{d-c}\right\rceil} (c, d-c)_q\bigr) \\ 
&\qquad -(q-1)\bigl( (a, b)_q+q^{\left\lceil \frac{b}{d}\right\rceil} (c, d)_q \bigr)  \quad (\frac{b}{d}=\frac{b-a}{d-c}\sharp \frac{a}{c},\ d-c>1)\displaybreak[0]\\
&=q^2(a+c, (b-a)+(d-c))_q -(q-1)(a+c, b+d)_q. 
\end{align*}
It follows that \eqref{eq4-3} holds for $\alpha \sharp \beta $. 
\par 
(3) In the case where $a<b$ and $d-c=1$, as the same argument of (2), one can verify the equation \eqref{eq4-3} for $\alpha \sharp \beta $. 
\par 
(4) In the case where $a<b$ and $d=c$, we have $c=d=1$. 
This leads to $a=b+1$ by $ad-bc=1$, which is a contradiction for $a\leq b$. Thus, this case does not occur. 
\par 
This completes the induction argument. 
\par 
The equation \eqref{eq4-4} can be derived from \eqref{eq4-3} as follows. 
For the case $a=b=1$, the desired equation can be easily verified. 
So, we consider the case of $a<b$. 
By definition of $(a, b)_q$, we have 
$(a, b)_q=(a-r, r)_q+q(a, b-a)_q$. 
This implies that $
J_{\frac{b}{a}}(q)+q(a-r, r)_q
=J_{\frac{b}{a}}(q)+q\bigl( (a, b)_q-q(a, b-a)_q\bigr) =(a,b)_q$. 
\end{proof} 

As an application of Theorem~\ref{4-3} we have: 

\begin{thm}\label{thm23}
For a pair $(a, x)$ of coprime integers with $1\leq a\leq x$, 
\begin{align}
N_q\Bigl( \dfrac{x}{a}\Bigr) &=(a, x-a)_q, \label{eq4-5}\\ 
D_q\Bigl( \dfrac{x}{a}\Bigr) &=(a-r, r)_q  \label{eq4-6}
\end{align}
where $r$ is the remainder when  $x$ is divided by $a$. 
\end{thm}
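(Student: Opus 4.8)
The plan is to establish \eqref{eq4-5} by induction and then read off \eqref{eq4-6} from it. I would prove \eqref{eq4-5} by induction on the numerator $x$ (equivalently, on the number of Farey sums needed to produce $\tfrac{x}{a}$). When $x=1$ we have $\tfrac{x}{a}=\tfrac{1}{1}$ and both sides equal $(1,0)_q=[1]_q=1$; when $a=1$ and $x>1$ the fraction $\tfrac{x}{1}=x$ is an integer with $N_q(x)=[x]_q=(1,x-1)_q$. So assume $1<a<x$ and let $\alpha=\tfrac{x_1}{a_1}<\beta=\tfrac{x_2}{a_2}$ be the parents of $\tfrac{x}{a}$ in the sense of Lemma~\ref{2-1}, so $a_1x_2-a_2x_1=1$, $x_1+x_2=x$, $a_1+a_2=a$. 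Then $x_1,x_2<x$; the larger parent satisfies $\beta>\tfrac{x}{a}>1$ and the smaller satisfies $\alpha\ge 1$ (Farey neighbours never straddle $1$), while $\beta\neq\tfrac{1}{0}$ since $\tfrac{1}{0}$ is a parent only of integers. Hence the inductive hypothesis \eqref{eq4-5} is available for $\alpha$ and $\beta$ (in the limiting case $\alpha=\tfrac{1}{1}$ one still has $N_q(\tfrac{1}{1})=1=(a_1,x_1-a_1)_q$).

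By Theorem~\ref{3-1}, Remark~\ref{3-2} and the inductive hypothesis,
\[
N_q\!\left(\tfrac{x}{a}\right)=N_q(\alpha)+q^{\lceil x_1/x_2\rceil}N_q(\beta)=(a_1,x_1-a_1)_q+q^{\lceil x_1/x_2\rceil}(a_2,x_2-a_2)_q .
\]
The fractions $\tfrac{x_1-a_1}{a_1}$ and $\tfrac{x_2-a_2}{a_2}$ are again Farey neighbours, since $a_1(x_2-a_2)-a_2(x_1-a_1)=a_1x_2-a_2x_1=1$, and applying Lemma~\ref{2-2} to the Farey neighbours $\tfrac{x_1-a_1}{x_2-a_2}$ and $\tfrac{a_1}{a_2}$, whose Farey sum is $\tfrac{x_1}{x_2}$, gives $\lceil x_1/x_2\rceil=\lfloor\tfrac{x_1-a_1}{x_2-a_2}+1\rfloor$. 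Since $\tfrac{x_1-a_1}{a_1}<\tfrac{x_2-a_2}{a_2}$ are Farey neighbours, exactly one of $a_1\le x_1-a_1$ and $x_2-a_2\le a_2$ holds, selecting exactly one of the two alternatives in the corollary to Theorem~\ref{3-7}; in the first one has $x_2-a_2\ge 2$, so its weight $\lceil\tfrac{x_1-a_1}{x_2-a_2}\rceil$ equals $\lceil x_1/x_2\rceil$, and in the second the weight $\lfloor\tfrac{x_1-a_1}{x_2-a_2}+1\rfloor$ is $\lceil x_1/x_2\rceil$ outright. Therefore
\[
N_q\!\left(\tfrac{x}{a}\right)=(a_1,x_1-a_1)_q\oplus(a_2,x_2-a_2)_q=\bigl(a_1+a_2,(x_1-a_1)+(x_2-a_2)\bigr)_q=(a,x-a)_q ,
\]
completing the induction for \eqref{eq4-5}.

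For \eqref{eq4-6}, take $1\le a<x$ and combine Theorem~\ref{4-3}, equation \eqref{eq4-3}, with the defining recursion \eqref{eq3-8} for $(a,x)_q$ and with Lemma~\ref{4-1}: this gives $qN_q(\tfrac{x}{a})+(1-q)D_q(\tfrac{x}{a})=q(a,x-a)_q+(1-q)(a-r,r)_q$, and now substituting \eqref{eq4-5} and cancelling $qN_q(\tfrac{x}{a})$ yields $(1-q)D_q(\tfrac{x}{a})=(1-q)(a-r,r)_q$, hence $D_q(\tfrac{x}{a})=(a-r,r)_q$ (the case $a=x$ being trivial). The part I expect to cost real work is the inductive step for \eqref{eq4-5}: checking that $\alpha\ge 1$ and that the pair $\tfrac{x_1-a_1}{a_1},\tfrac{x_2-a_2}{a_2}$ is positioned so that precisely one alternative of the corollary to Theorem~\ref{3-7} applies, with its weight matching the exponent $\lceil x_1/x_2\rceil$ furnished by Theorem~\ref{3-1}; the rest is formal.
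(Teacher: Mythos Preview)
Your proposal is correct and follows essentially the same route as the paper: prove \eqref{eq4-5} by induction over Farey sums, using Theorem~\ref{3-7} (in your case its corollary, together with Lemma~\ref{2-2} to match the exponent $\lceil x_1/x_2\rceil$) for the inductive step, and then derive \eqref{eq4-6} from \eqref{eq4-5} by combining Lemma~\ref{4-1} with Theorem~\ref{4-3} exactly as the paper does. Your treatment is in fact a bit more careful than the paper's, since by invoking the corollary you explicitly cover both alternatives $a_1\le x_1-a_1$ and $x_2-a_2\le a_2$, whereas the paper cites Theorem~\ref{3-7} directly without isolating these cases.
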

\begin{proof}
The equation \eqref{eq4-5} is proved by induction of the number of times of $\sharp$ as follows. 
In the case of $a=1$ the equation \eqref{eq4-5} holds since 
$N_q( \frac{n}{1}) =[n]_q=(n)_q=(1, n-1)_q$. 
Let $\frac{x}{a}, \frac{y}{b}$ be Farey neighbors consisting of rational numbers greater than $1$, and assume that they satisfy 
$N_q( \frac{x}{a}) =(a, x-a)_q,\ 
N_q( \frac{y}{b}) =(b, y-b)_q$. 
Then 
\begin{align*}
(a+b, x+y-(a+b))_q
&=(a, x-a)_q+q^{\left\lceil \frac{x}{y}\right\rceil }(b, y-b)_q \quad (\text{Theorem~\ref{3-7}}) \\ 
&=N_q\Bigl( \dfrac{x}{a}\Bigr) +q^{\left\lceil \frac{x}{y}\right\rceil }N_q\Bigl( \dfrac{y}{b}\Bigr)  \quad (\text{induction hypothesis}) \\ 
&=N_q\Bigl( \dfrac{x}{a}\sharp \dfrac{y}{b}\Bigr)  \quad (\text{Theorem~\ref{3-1} and \eqref{eq3-7}}) \\ 
&=N_q\Bigl( \dfrac{x+y}{a+b}\Bigr) , 
\end{align*}
and hence the equation \eqref{eq4-5} holds for the Farey sum $\frac{x}{a}\sharp \frac{y}{b}$. 
This completes the induction argument. 
\par 
The equation \eqref{eq4-6} can be reduced from \eqref{eq4-5} as follows. 
By Lemma~\ref{4-1} and \eqref{eq4-4} the following equation holds: 
\begin{equation}\label{eq4-7}
(a, p)_q=qN_q\Bigl( \dfrac{p}{a}\Bigr) +(1-q)D_q\Bigl( \dfrac{p}{a}\Bigr) +q(a-r, r)_q. 
\end{equation}
By \eqref{eq4-5} this implies that 
\begin{align*}
(1-q)D_q\Bigl(\dfrac{p}{a}\Bigr) 
&=(a, p)_q-q(a-r, r)_q-q(a, p-a)_q \\ 
&=(1-q)(a-r, r)_q, 
\end{align*}
and  hence we have \eqref{eq4-6}. 
\end{proof} 

If positive integers $a, x$ are coprime, then so are  $a, a+x$. 
By applying Theorem~\ref{thm23} we have: 

\begin{cor}\label{cor24}
For a pair $(a, x)$ of coprime integers with $1\leq a\leq x$, 
\begin{equation}
(a, x)_q=N_q\Bigl( \dfrac{a+x}{a}\Bigr). 
\end{equation}
\end{cor}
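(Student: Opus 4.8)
The plan is to deduce Corollary~\ref{cor24} directly from Theorem~\ref{thm23} by a simple change of variables, so almost no new computation is needed. The observation recorded just before the statement is that if $a, x$ are coprime then $a$ and $a+x$ are coprime; moreover $1\leq a<a+x$, so the pair $(a, a+x)$ satisfies the hypotheses of Theorem~\ref{thm23} (applied with ``$x$'' replaced by $a+x$). Concretely, I would set $x':=a+x$, observe $1\leq a<x'$, and apply \eqref{eq4-5} with the pair $(a, x')$ to obtain $N_q\bigl(\frac{x'}{a}\bigr)=(a, x'-a)_q=(a, x)_q$. Since $\frac{x'}{a}=\frac{a+x}{a}$, this is exactly the claimed identity.

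The one point that requires a moment's care is the edge case $a=x$, i.e. $a=x=1$ (coprimality forces this), where $1\leq a\leq x$ holds with equality. Then $a+x=2$ and the claim reads $(1,1)_q=N_q\bigl(\frac{2}{1}\bigr)$. By the convention $(1,1)_q=[2]_q=1+q$, and $N_q\bigl(\frac{2}{1}\bigr)=[2]_q=1+q$ by property (ii) of $N_q$ together with $\frac{2}{1}=[2]^-$, so equality holds; alternatively this is the $a=1$ base case already treated inside the proof of Theorem~\ref{thm23}. For $a<x$ one is squarely in the range $1\leq a<a+x$ covered above. So the proof is essentially two lines: invoke the coprimality remark, then substitute into \eqref{eq4-5}.

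I do not anticipate a genuine obstacle here — the corollary is a relabelling of \eqref{eq4-5}. If anything, the only thing to be slightly careful about is making sure the hypothesis ``$1\leq a\leq x$'' of Theorem~\ref{thm23} is met with the \emph{new} second coordinate $a+x$ rather than the old one; since $a\geq 1$ gives $a<a+x$, this is automatic, and the strict inequality even avoids the degenerate case entirely when $a<x$. Thus the write-up will simply read: ``Since $a,x$ are coprime, so are $a$ and $a+x$, and $1\leq a<a+x$; applying \eqref{eq4-5} to the pair $(a,a+x)$ gives $(a,x)_q=(a,(a+x)-a)_q=N_q\bigl(\frac{a+x}{a}\bigr)$.''
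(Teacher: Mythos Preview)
Your proposal is correct and matches the paper's own argument: the paper simply records that coprimality of $a,x$ implies coprimality of $a,a+x$ and then invokes Theorem~\ref{thm23} (i.e.\ equation~\eqref{eq4-5}) with the pair $(a,a+x)$. Your extra care with the edge case $a=x=1$ is harmless but unnecessary, since Theorem~\ref{thm23} is stated for $1\leq a\leq x$ and $1\leq a<a+x$ always holds.
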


\par \medskip 
\begin{rem}
By Corollary~\ref{cor24} and Morier-Genoud and Ovsienko's results~\cite{M-GO2} we see that coefficients of 
$(a, x)_q$ have a quiver theoretical meaning as follows. 
Let $G$ be an oriented graph, and $V(G)$ denote the set of vertices of $G$. 
A subset $C\subset V(G)$ is an $\ell$-vertex closure if $\sharp C=\ell$ and there are no edges   
from vertices in $C$  to vertices in $V(G)-C$. 
A vertex in a $1$-vertex closure is called a sink. 
For a pair $(a, x)$ of coprime integers with $1\leq a\leq x$, we write in the form 
$\frac{x}{a}=[a_1, a_2, \ldots , a_{2m-1}, a_{2m}]$. 
Consider the quiver 
\begin{equation}
\tilde{G}\Bigl( \dfrac{x}{a} \Bigr)\ \mathrm{ :\ \raisebox{-0.2cm}{\includegraphics[height=1.1cm]{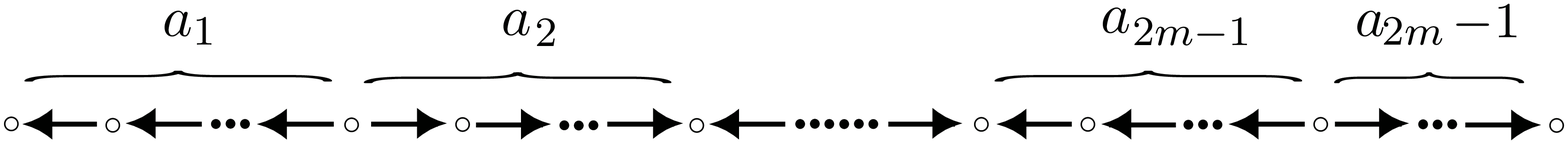}}}
\end{equation}
By \cite{M-GO2} then $(a, x)_q$ can be expressed as 
\begin{equation}
(a, x)_q=1+\rho_1q+\rho_2q^2+\cdots +\rho_{n-1}q^{n-1}+q^n, 
\end{equation}
where $n=a_1+a_2+\cdots +a_{2m}$, and 
\begin{equation}
\rho_i=\sharp \Bigl\{\ \text{the $i$-vertex closures of $\tilde{G}\Bigl( \dfrac{x}{a} \Bigr)$} \Bigr\}
\end{equation}
for each $1\leq i\leq n-1$. 
\end{rem}

\par \medskip \noindent 
{Acknowledgments}. 
I would like to thank Professors Takeyoshi Kogiso and Eiko Kin for helpful discussions and comments. 
I would like to thank the organizers of the conference \lq\lq Low dimensional topology and number theory XIII" to the memory of Professor Toshie Takata for inviting me in the conference. 
I had studied together with her since postgraduate degree. 
I would like to dedicate this paper in memory of Toshie Takata san with my heartfelt thanks.


\begin{thebibliography}{99}

\bibitem{Aigner}
M. Aigner:
Markov's theorem and 100 years of the uniqueness conjecture, 
Springer, 2013. 


\bibitem{BBL}
A. Bapat, L. Becker and A.M. Licata: 
\textit{$q$-deformed rational numbers and the $2$-Calabi-Yau category of type $A_2$}, 
arXiv2202.07613, 2022. 

\bibitem{Cromwell}
P. Cromwell: 
Knots and links, 
Cambridge University Press, Cambridge, 2004.

\bibitem{HO}
A. Hatcher and U. Ortel: 
\textit{Boundary slopes for Montesinos knots}, 
Topology {\bf 28} (1989), 453--480. 

\bibitem{Hirzebruch}
F.E.P. Hirzebruch: 
\textit{Hilbert modular surfaces}, 
Enseign. Math. (2) {\bf 19} (1973), 183--281. 

\bibitem{HirzebruchZagier}
F.E.P. Hirzebruch and D. Zagier:
\textit{Classification of Hilbert modular surfaces}, 
in \lq\lq Complex analysis and algebraic geometry," 
Collected papers II, Iwanami Shoten, Tokyo, 1977, 43--77. 

\bibitem{Jones}
V.\,F.\,R.\, Jones: 
\textit{A polynomial invariant for links via von Neumann algebras}, 
Bull. of Amer. Math. Soc. {\bf 129} (1985), 103--112. 

\bibitem{Kanenobu}
T. Kanenobu: 
\textit{Jones and Q-polynomials for 2-bridge knots and links}, 
Proc. Amer. Math. Soc. {\bf 110} (1990), 835--841. 

\bibitem{Kau-Topology}
L.H. Kauffman: 
\textit{State models and the Jones polynomial}, 
Topology {\bf 26} (1987), 395--407. 

\bibitem{Kogiso-Wakui_Proc}
T. Kogiso and M. Wakui:  
\textit{Kauffman bracket polynomials of Conway Coxeter friezes}, 
in: Proceedings of the Meeting for Study of Number Theory, 
Hopf Algebras and Related Topics, 51--79, Yokohama Publ., Yokohama, 2019. 

\bibitem{Kogiso-Wakui}
T. Kogiso and M. Wakui:  
\textit{A bridge between Conway-Coxeter Friezes and rational tangles through the Kauffman bracket polynomials}, 
J. Knot Theory Ramifications {\bf 28} (2019), 1950083, 40pp. 

\bibitem{Kogiso-Wakui_OJM}
T. Kogiso and M. Wakui:  
\textit{A characterization of Conway-Coxeter friezes of zigzag type by rational links}, 
Osaka J. Math. {\bf 59} (2022), 341--362. 

\bibitem{LLS}
E. Lee, S. Y. Lee and M. Seo: 
\textit{A recursive formula for the Jones polynomial of 2-bridge links and applications}, 
J. Korean Math. Soc. {\bf 46} (2009), 919--947.

\bibitem{LeeSchiffler}
K. Lee and R. Schiffler: 
\textit{Cluster algebras and Jones polynomials}, 
Selecta Math. (N.S.) {\bf 25} (2019), Paper No.58, 41pp.

\bibitem{M-GO2}
S. Morier-Genoud and V. Ovsienko: 
\textit{$q$-deformed rationals and $q$-continued fractions},
Forum Math. Sigma {\bf 8} (2020), e13, 55pp.

\bibitem{Murasugi}
K. Murasugi: 
Knot theory and its applications, 
translated from the 1993 Japanese original by Bohdan Kurpita, Birkh\"{a}user, Boston, MA, 1996. 

\bibitem{Nagai_Terashima}
W. Nagai and Y. Terashima: 
\textit{Cluster variables, ancestral triangles and Alexander polynomials}, 
Adv. Math. {\bf 363} (2020), 106965, 37pp.

\bibitem{Nakabo1}
S. Nakabo: 
\textit{Formulas on the HOMFLY and Jones polynomials of 2-bridge knots and links}, 
Kobe J. Math. {\bf 17} (2000), 131--144. 

\bibitem{Nakabo2}
S. Nakabo: 
\textit{Explicit description of the HOMFLY polynomials for 2-bridge knots and links}, 
J. Knot Theory Ramifications {\bf 11} (2002), 565--574.

\bibitem{QYAQ}
K. Qazaqzeh, M. Yasein and M. Abu-Qamar: 
\textit{The Jones polynomial of rational links},
Kodai Math. J. {\bf 39} (2016), 59--71.

\bibitem{Yamada-Proceeding}
S. Yamada: 
\textit{Jones polynomial of two-bridge knots (Ni-hashi musubime no Jones takoushiki)}, in Japanese, 
in Proceedings of \lq\lq Musubime no shomondai to saikin no seika", 1996, 92--96. 

\end{thebibliography}
\end{document}